\documentclass[reqno,11pt]{article} 

\usepackage[left=2.5cm,right=2.5cm,top=3cm,bottom=3cm,a4paper]{geometry}
\usepackage{amsmath,amssymb}
\usepackage{amsthm, amscd} 
\usepackage{empheq}
\usepackage[all,cmtip]{xy}
\usepackage{float}
\usepackage{caption}
\usepackage{color}
\usepackage{rotating}
\usepackage[utf8]{inputenc}
\usepackage[T1]{fontenc}
\usepackage{enumitem}
\usepackage{url}

\usepackage[titles]{tocloft}
\makeatletter
\DeclareRobustCommand{\rvdots}{%
  \vbox{
    \baselineskip4\p@\lineskiplimit\z@
    \kern-\p@
    \hbox{.}\hbox{.}\hbox{.}
  }}
\makeatother

\newcommand{\Mod}[1]{\ (\textup{mod}\ #1)}
\makeatletter
\def\moverlay{\mathpalette\mov@rlay}
\def\mov@rlay#1#2{\leavevmode\vtop{%
   \baselineskip\z@skip \lineskiplimit-\maxdimen
   \ialign{\hfil$\m@th#1##$\hfil\cr#2\crcr}}}
\newcommand{\charfusion}[3][\mathord]{
    #1{\ifx#1\mathop\vphantom{#2}\fi
        \mathpalette\mov@rlay{#2\cr#3}
      }
    \ifx#1\mathop\expandafter\displaylimits\fi}
\makeatother

\theoremstyle{plain} 
\newtheorem{theorem}{\indent\sc Theorem}[section]
\newtheorem{lemma}[theorem]{\indent\sc Lemma}

\newtheorem{proposition}[theorem]{\indent\sc Proposition}

\theoremstyle{definition} 
\newtheorem{definition}[theorem]{\indent\sc Definition}

\newtheorem{remark}[theorem]{\indent\sc Remark}

\makeatletter
\def\address#1#2{\begingroup
\noindent\parbox[t]{7.8cm}{%
\small{\scshape\ignorespaces#1}\par\vskip1ex
\noindent\small{\itshape E-mail address}%
\/: #2\par\vskip4ex}\hfill%
\endgroup}%
\makeatother

\title{Adelic framed form class groups and explicit class field theory}
\author{
\textsc{Ja Kyung Koo, Dong Hwa Shin and Dong Sung Yoon} 
}
\date{} 
\begin{document}

\allowdisplaybreaks

\maketitle

\footnote{ 
2020 \textit{Mathematics Subject Classification}. Primary 11E57; Secondary 11F03, 11R20, 11R37.}
\footnote{ 
\textit{Key words and phrases}. Class field theory, form class groups, metabelian extensions,
modular functions.} \footnote{
\thanks{}
}

\begin{abstract}
Let $D$ be a negative discriminant, and let $K=\mathbb{Q}(\sqrt{D})$. 
Let $\mathcal{Q}(D)$ denote the set of primitive positive definite binary quadratic
forms over $\mathbb{Z}$ of discriminant $D$. 
We introduce the set of adelic framed forms
\begin{equation*}
\widehat{\mathcal{Q}}(D)=
\left\{(Q,\,\gamma)\in
\mathcal{Q}(D)\times\mathrm{SL}_2(\widehat{\mathbb{Z}})~|~
Q\left(\gamma\begin{bmatrix}1\\0\end{bmatrix}\right)\in
\widehat{\mathbb{Z}}^\times\right\}
\end{equation*}
and its orbit space $\widehat{C}(D)$ under the natural action of $\mathrm{SL}_2(\mathbb{Z})$.
We define an explicit adelic analogue of the Gauss-Dirichlet composition law
on $\widehat{C}(D)$ and endow $\widehat{C}(D)$ with the quotient topology induced 
by the subspace topology on $\widehat{\mathcal{Q}}(D)$
inherited from the product topology
on $\mathcal{Q}(D)\times\mathrm{SL}_2(\widehat{\mathbb{Z}})$,
where
$\mathcal{Q}(D)$ is discrete
and $\mathrm{SL}_2(\widehat{\mathbb{Z}})$ has its profinite topology. 
We then prove that 
there is an isomorphism of topological groups
\begin{equation*}
\widehat{C}(D)\simeq\mathrm{Gal}\left(K^\mathrm{ab}(\mathfrak{t}^{1/\infty})/K(\mathfrak{t})\right),
\end{equation*}
where the Galois group is endowed with the Krull topology, 
$\mathfrak{t}$ is a positive transcendental real number, and $\mathfrak{t}^{1/\infty}=\{\sqrt[N]{\mathfrak{t}}~|~N\geq1\}$. 
Moreover, we identify an explicitly defined subgroup of 
$\widehat{C}(D)$ with
$\mathrm{Gal}(K^\mathrm{ab}/K)$ and 
describe the corresponding Galois action on special values of modular functions.
In this way, classical Gauss composition, finite-level form class groups, and Shimura reciprocity are brought together within a single adelic framework.
Finally, we show that the abstract group structure of 
$\widehat{C}(D)$ uniquely determines the imaginary quadratic field $K$.
\end{abstract}

\tableofcontents

\section {Introduction}

Let $K$ be a number field, that is, a
finite extension of the field $\mathbb{Q}$ of
rational numbers. Classical class field theory, 
as developed by Takagi \cite{Takagi}, 
describes finite abelian extensions of $K$ in terms of 
generalized ideal class groups attached to moduli;
see also \cite{Janusz}.
Let $K^\mathrm{ab}$ denote the maximal abelian extension of $K$. 
To describe the absolute abelian Galois group $\mathrm{Gal}(K^\mathrm{ab}/K)$, however,
one must pass to the inverse limit of the
corresponding finite Galois groups as the modulus varies.
\par
Chevalley's idelic formulation encodes the entire
theory in a single adelic object.  If
\begin{equation*}
C_K=\mathbb{A}_K^{\times}/K^{\times}
\end{equation*}
is the idele class group of $K$, then 
the global Artin reciprocity map $\mathrm{rec}_K:C_K\rightarrow\mathrm{Gal}(K^\mathrm{ab}/K)$
induces a canonical isomorphism
$C_K/N_{L/K}(C_L)\stackrel{\sim}{\rightarrow}\mathrm{Gal}(L/K)$
for every finite abelian extension $L/K$. 
Moreover, $\mathrm{rec}_K$ is surjective and its kernel is the 
connected component $C_K^\circ$ containing the identity. Hence
\begin{equation*}
 C_K/C_K^\circ\simeq\mathrm{Gal}(K^\mathrm{ab}/K)
\end{equation*}
as topological groups; see \cite{Chevalley, Milne, Neukirch}.  
This structural completeness does not, however, 
by itself yield a concrete arithmetic realization of class field theory.
For imaginary quadratic fields, the theory of complex multiplication provides
such a realization. 
\par
We now specialize the case
where $K$ is an imaginary quadratic field. 
The main theorems of complex multiplication show
that special values of modular functions at CM points 
generate abelian extensions of $K$, while
Shimura's reciprocity law describes 
the Galois action on these values
in terms of modular transformations;
see \cite{Deuring, Hasse, Shimura, Stevenhagen}. 
On the other hand, 
the Gauss-Dirichlet composition \cite{Dirichlet, Gauss}
equips the proper equivalence classes of primitive positive definite binary quadratic forms
with a concrete group law.
The resulting form class group can be identified with the ideal class group of an
order in $K$; see \cite{Cox}. 
Thus the adelic and form-theoretic 
descriptions have complementary strengths: the former captures 
the full abelian Galois group in adelic terms, 
whereas the latter, together with the theory of
complex multiplication, furnishes concrete representatives, 
special values, and transformation laws. 
This motivates the problem of constructing a single form-theoretic object that combines the adelic scope of the idele class group with the
concrete arithmetic content of Gauss-Dirichlet composition and Shimura reciprocity. 
\par
Recently, finite-level versions of this problem have been investigated in 
\cite{E-K-S, J-K-S-Y}. The form class groups associated with $\Gamma_1(N)$ realize ray class groups of 
orders in $K$, whereas those associated with $\Gamma(N)$ realize certain Galois groups over $K(\mathfrak{t})$ involving both ray class fields and the Kummer extension generated by $\sqrt[N]{\mathfrak{t}}$, where $\mathfrak{t}$ is a positive transcendental real number. 
As $N$ varies, these groups form an inverse system,
and the corresponding inverse-limit description 
in terms of CM-points on modular curves was established in \cite{J-K-S}. 
Related adelic structures arising from towers of modular curves
were investigated in the work of Daw and Zilber \cite{D-Z}.
The inverse-limit description, however, 
does not itself provide an orbit-space
realization in terms of individual quadratic-form objects
over $\widehat{\mathbb{Z}}$ or $\mathbb{Z}\times\widehat{\mathbb{Z}}$,
where
\begin{equation*}
\widehat{\mathbb{Z}}=\varprojlim_{N\geq1}\mathbb{Z}/N\mathbb{Z}
\quad\left(\simeq\prod_{p\,:\,\text{primes}}
\mathbb{Z}_p\right)
\end{equation*}
denotes the profinite 
completion of $\mathbb{Z}$.
\par
In this paper, 
we construct a single adelic framed form class group. More precisely, for a negative discriminant $D$
we consider
\begin{equation*}
\widehat{\mathcal{Q}}(D)=
\left\{(Q,\,\gamma)\in\mathcal{Q}(D)\times
\mathrm{SL}_2(\widehat{\mathbb{Z}})~|~
Q\left(\gamma\begin{bmatrix}1\\0\end{bmatrix}\right)\in\widehat{\mathbb{Z}}^{\times}\right\},
\end{equation*}
where $\mathcal{Q}(D)$ denotes the set of primitive positive definite binary quadratic forms over
$\mathbb{Z}$ of discriminant $D$. We then take the quotient 
of this set by the natural action of $\operatorname{SL}_2(\mathbb{Z})$. The integral form $Q$ retains the classical arithmetic data, while the frame $\gamma$ records compatible changes of variables at all finite levels.
On the resulting quotient space $\widehat{C}(D)$, we define
a direct adelic analogue of Gauss-Dirichlet composition 
(Definition \ref{explicit}). The natural projections to finite levels,
together with finite-level reciprocity isomorphisms, induce an isomorphism
\begin{equation*}
\eta:\widehat{C}(D)
\stackrel{\sim}{\rightarrow}\mathrm{Gal}\left(K^\mathrm{ab}(\mathfrak{t}^{1/\infty})/K(\mathfrak{t})\right)
\end{equation*}
(Theorem \ref{main}). Here, $K=\mathbb{Q}(\sqrt{D})$. 
This construction also yields a form-theoretic formulation of
the Shimura reciprocity law (Theorem \ref{Shimurarec}). 
Moreover,  
when $\widehat{C}(D)$ and $\mathrm{Gal}
\left(K^\mathrm{ab}(\mathfrak{t}^{1/\infty})/K(\mathfrak{t})\right)$
are endowed with the quotient topology and the Krull topology, respectively,
$\eta$ is an isomorphism of topological groups (Theorem \ref{topological}). 
\par
The Kummer direction in the above nonabelian Galois group retains
arithmetic information that is not detected by $\mathrm{Gal}(K^\mathrm{ab}/K)$ alone.
Indeed, Onabe \cite{Onabe} and later Angelakis and Stevenhagen \cite{A-S}
showed that
nonisomorphic imaginary quadratic fields, even with different class numbers, 
may have isomorphic absolute abelian Galois groups.
In contrast, we prove that the underlying abstract group 
$\widehat{C}(D)$ uniquely determines the imaginary quadratic field. For negative
discriminants $D_1$ and $D_2$, one has
\begin{equation*}
\widehat{C}(D_1)\simeq\widehat{C}(D_2)~\textrm{as groups}
\quad\Longleftrightarrow\quad
\mathbb{Q}(\sqrt{D_1})=\mathbb{Q}(\sqrt{D_2})
\end{equation*}
(Theorem \ref{rigidity}).
\par
Thus our construction not only realizes the inverse system of
finite-level form class groups as a single adelic orbit space equipped
with an explicit composition law, but also produces a nonabelian
profinite group that determines the
imaginary quadratic field. This formulation may provide a useful
starting point for studying related infinite-level structures arising
from towers of modular curves.
\par
\bigskip
Throughout this paper, we use the following notation:
\begin{itemize}[noitemsep]
\item $N\in\mathbb{N}$, where $\mathbb{N}$ denotes the set of positive integers.
\item $D$ is a negative integer such that $D\equiv1$ or $0\Mod{4}$. 
\item $K=\mathbb{Q}(\sqrt{D})$ is an imaginary quadratic field.
\item $\mathcal{O}$ is the order in $K$ of discriminant $D$.
\item $\mathfrak{t}$ is a positive transcendental real number.
\end{itemize}

\section {Ray class fields for orders}

In this preliminary section, we briefly review 
the definition of ray class fields associated with orders in imaginary quadratic fields. 
\par
Let $I(\mathcal{O})$ be the group of proper fractional $\mathcal{O}$-ideals 
(see \cite[$\S$7.A]{Cox}). 
A nonzero integral $\mathcal{O}$-ideal $\mathfrak{a}$ is said to be prime to 
$N$ if $\mathfrak{a}+N\mathcal{O}=\mathcal{O}$. 
Let $I(\mathcal{O},\,N)$ and $P_1(\mathcal{O},\,N)$ be the subgroups of $I(\mathcal{O})$ 
defined by
\begin{align*}
I(\mathcal{O},\,N)&=\bigl\langle\text{nonzero proper integral $\mathcal{O}$-ideals
prime to $N$}\bigr\rangle,\\
P_1(\mathcal{O},\,N)&=\bigl\langle\nu\mathcal{O}~|~
\nu\in\mathcal{O}\setminus\{0\},~
\nu\equiv1\Mod{N\mathcal{O}}\bigr\rangle,
\end{align*}
respectively. The quotient group 
\begin{equation*}
C(\mathcal{O},\,N)=
I(\mathcal{O},\,N)/P_1(\mathcal{O},\,N)
\end{equation*}
is isomorphic to a generalized 
ideal class group modulo $\ell_\mathcal{O} N\mathcal{O}_K$,
where $\mathcal{O}_K$ is the maximal order of  $K$ and
$\ell_\mathcal{O}=[\mathcal{O}_K:\mathcal{O}]$ is the conductor of $\mathcal{O}$; 
see \cite[Theorem 3.18]{Schertz}. By the existence
theorem of class field theory (see
\cite[$\S$V.9]{Janusz} and \cite[$\S$2]{J-K-S-Y}), there exists
a unique finite abelian extension $K_{\mathcal{O},\,N}$ of $K$
such that every nonzero prime ideal of $\mathcal{O}_K$ that ramifies in 
$K_{\mathcal{O},\,N}$ divides $\ell_\mathcal{O} N\mathcal{O}_K$,
and the Artin map for the modulus $\ell_\mathcal{O} N\mathcal{O}_K$ induces an isomorphism 
\begin{equation*}
C(\mathcal{O},\,N)\stackrel{\sim}{\rightarrow}
\mathrm{Gal}(K_{\mathcal{O},\,N}/K). 
\end{equation*}
The field $K_{\mathcal{O},\,N}$ is called the 
\textit{ray class field of $\mathcal{O}$ modulo $N\mathcal{O}$}.
This notion was originally introduced by S\"{o}hngen \cite{Sohngen}
and was later discussed by Stevenhagen \cite{Stevenhagen} from
the viewpoint of 
explicit class field theory. Let 
$K^\mathrm{ab}$ denote the maximal abelian extension of $K$.

\begin{lemma}\label{basic}
Concerning ray class fields of $\mathcal{O}$ modulo $N\mathcal{O}$, we have the following properties.
\begin{enumerate}
\item[\textup{(i)}] If $M,\,N\in\mathbb{N}$ and $N\,|\,M$, then
$K_{\mathcal{O},\,N}\subseteq K_{\mathcal{O},\,M}$.
\item[\textup{(ii)}] We get
$\displaystyle
\bigcup_{N\in\mathbb{N}}K_{\mathcal{O},\,N}=K^\mathrm{ab}$.
\end{enumerate}
\end{lemma}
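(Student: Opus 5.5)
The plan is to argue entirely through the correspondence between congruence subgroups and class fields. Two standard facts from class field theory will be used. First, the existence theorem yields a bijection between finite abelian extensions of $K$ and congruence subgroups of ideal groups, modulo the usual equivalence. Second, this bijection reverses inclusion: if $H_1\subseteq H_2$ are congruence subgroups for compatible moduli, then the class field of $H_1$ contains that of $H_2$.

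For (i), assume $N\,|\,M$ and work with the common modulus $\mathfrak{m}=\ell_\mathcal{O} M\mathcal{O}_K$, which is divisible by $\ell_\mathcal{O} N\mathcal{O}_K$. Via \cite[Theorem 3.18]{Schertz}, $C(\mathcal{O},\,N)$ is a quotient $I_K(\ell_\mathcal{O} N)/H_N$, where $H_N$ is the image of $P_1(\mathcal{O},\,N)$ under $\mathfrak{a}\mapsto\mathfrak{a}\mathcal{O}_K$. Restricting to ideals prime to $\mathfrak{m}$, I will show that $H_M\cap I_K(\mathfrak{m})\subseteq H_N\cap I_K(\mathfrak{m})$. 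This is immediate at the level of generators: if $\nu\in\mathcal{O}$ satisfies $\nu\equiv1\Mod{M\mathcal{O}}$, then $\nu\equiv1\Mod{N\mathcal{O}}$, so $P_1(\mathcal{O},\,M)\subseteq P_1(\mathcal{O},\,N)$. The one point that needs care is that ideals prime to $M$ are prime to $N$, and that the correspondence $\mathfrak{a}\mapsto\mathfrak{a}\mathcal{O}_K$ of \cite{Schertz} is compatible with changing the level. I expect this to be routine, since both sides are defined through the same map. Once the inclusion is established, the inclusion-reversing property of the correspondence gives $K_{\mathcal{O},\,N}\subseteq K_{\mathcal{O},\,M}$.

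For (ii), the inclusion $\subseteq$ is clear, because each $K_{\mathcal{O},\,N}$ is an abelian extension of $K$. For $\supseteq$, note that every element of $K^\mathrm{ab}$ lies in a finite abelian extension $L/K$. By class field theory, $L$ is contained in the ray class field $K_\mathfrak{f}$ for some modulus $\mathfrak{f}$ of $\mathcal{O}_K$; since $K$ is imaginary quadratic there is no infinite part. Choose $N\in\mathbb{N}$ with $\mathfrak{f}\,|\,N\mathcal{O}_K$, for example $N=N_{K/\mathbb{Q}}(\mathfrak{f})$. Then $K_\mathfrak{f}\subseteq K_{N\mathcal{O}_K}$.

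It then remains to show $K_{N\mathcal{O}_K}\subseteq K_{\mathcal{O},\,N}$. At the modulus $\ell_\mathcal{O}N$, this amounts to showing that the congruence subgroup of $K_{\mathcal{O},\,N}$ is contained in that of $K_{N\mathcal{O}_K}$. Take a generator $\nu\mathcal{O}_K$ with $\nu\in\mathcal{O}$ and $\nu\equiv1\Mod{N\mathcal{O}}$. Since $N\mathcal{O}\subseteq N\mathcal{O}_K$, we get $\nu\equiv1\Mod{N\mathcal{O}_K}$, so $\nu\mathcal{O}_K$ lies in $P_{K,1}(N\mathcal{O}_K)$ restricted to ideals prime to $\ell_\mathcal{O}N$. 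Restricting to ideals prime to the larger modulus does not change the class field, so the desired inclusion follows.

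The main obstacle is bookkeeping: the subgroups attached to $\mathcal{O}$ must be correctly transported, via \cite[Theorem 3.18]{Schertz} and \cite[$\S$2]{J-K-S-Y}, to congruence subgroups of $\mathcal{O}_K$-ideals, with compatible moduli. Once that is done, both parts reduce to the elementary inclusions of principal-ideal generators described above.
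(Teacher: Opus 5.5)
Your route is different from the paper's, and it works once one bookkeeping point is fixed. The paper gives no argument for this lemma; it cites \cite[\S4, p.~169]{Stevenhagen} for (i) and \cite[Lemma 4.1 (3)]{J-K-S} for (ii).

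You instead give a self-contained proof from the ideal-theoretic definition of $K_{\mathcal{O},\,N}$ that the paper actually uses. You transport $C(\mathcal{O},\,N)$ to a congruence subgroup $H_N\subseteq I_K(\ell_\mathcal{O}N)$ and use that the class field correspondence reverses inclusions. Both reductions are sound. For (i) you compare at the common modulus $\ell_\mathcal{O}M\mathcal{O}_K$. For (ii) you place a finite abelian $L/K$ in $K_\mathfrak{f}\subseteq K_{N\mathcal{O}_K}\subseteq K_{\mathcal{O},\,N}$ with $N=N_{K/\mathbb{Q}}(\mathfrak{f})$. The citation buys brevity; your argument buys independence from the literature, at the cost of the transport bookkeeping. (Idelically, $K_{\mathcal{O},\,N}$ is the class field of $K^\times\mathbb{C}^\times\ker\bigl(\widehat{\mathcal{O}}^\times\rightarrow(\mathcal{O}/N\mathcal{O})^\times\bigr)$. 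Both parts then just say these open subgroups decrease along divisibility and are cofinal, but that first requires matching the idelic and ideal-theoretic definitions.)

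\textbf{The needed correction.} $H_N$ is not the image of all of $P_1(\mathcal{O},\,N)$ under $\mathfrak{a}\mapsto\mathfrak{a}\mathcal{O}_K$, even after intersecting with $I_K(\ell_\mathcal{O}N)$. For $N=1$ that image contains every $\epsilon\mathcal{O}_K=(\ell_\mathcal{O}\epsilon)\mathcal{O}_K\cdot(\ell_\mathcal{O}\mathcal{O}_K)^{-1}$ with $\epsilon\in\mathcal{O}_K\setminus\{0\}$. You would then get the Hilbert class field rather than the ring class field $K_{\mathcal{O},\,1}$. The correct group is the image of $P_1(\mathcal{O},\,N)\cap I(\mathcal{O},\,\ell_\mathcal{O}N)$ under the isomorphism $I(\mathcal{O},\,\ell_\mathcal{O}N)\simeq I_K(\ell_\mathcal{O}N)$.

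With this group, the rest goes through as follows.
\begin{enumerate}
\item[(a)] Part (i) follows from $P_1(\mathcal{O},\,M)\cap I(\mathcal{O},\,\ell_\mathcal{O}M)\subseteq P_1(\mathcal{O},\,N)\cap I(\mathcal{O},\,\ell_\mathcal{O}N)$.
\item[(b)] In (ii), argue with elements of this intersection rather than single generators, since a generator $\nu$ need not be prime to $\ell_\mathcal{O}$. An element $\lambda\mathcal{O}$ of the intersection has $\lambda=\alpha/\beta$ with $\alpha,\,\beta\in\mathcal{O}$ and $\alpha\equiv\beta\equiv1\Mod{N\mathcal{O}}$. Hence $\lambda\mathcal{O}_K\in P_{K,1}(N\mathcal{O}_K)\cap I_K(\ell_\mathcal{O}N)$.
\item[(c)] The change of modulus you use in both parts needs $H_N\supseteq P_{K,1}(\ell_\mathcal{O}N\mathcal{O}_K)$. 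This holds because $\alpha\equiv1\Mod{\ell_\mathcal{O}N\mathcal{O}_K}$ gives $\alpha\in1+\ell_\mathcal{O}N\mathcal{O}_K\subseteq\mathcal{O}$. Moreover $\alpha\equiv1\Mod{N\mathcal{O}}$ and $\alpha$ is prime to $\ell_\mathcal{O}N$.
\end{enumerate}
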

\begin{proof}
\begin{enumerate}
\item[(i)] See \cite[$\S$4, p. 169]{Stevenhagen}.
\item[(ii)] See \cite[Lemma 4.1 (3)]{J-K-S}.  
\end{enumerate}
\end{proof}

Let $\mathbb{H}=\{\tau\in\mathbb{C}~|~\mathrm{Im}(\tau)>0\}$ be the
complex upper half-plane, and let
\begin{equation*}
\Gamma(N)=\{\alpha\in\mathrm{SL}_2(\mathbb{Z})~|~\alpha\equiv I_2\Mod{NM_2(\mathbb{Z})}\}
\end{equation*}
be the principal congruence subgroup of level $N$. 
Let $\mathcal{F}_N$ be the field of meromorphic modular functions for 
$\Gamma(N)$ defined on $\mathbb{H}$
whose Fourier expansions with respect to 
$q^{1/N}$ have coefficients in the $N$th cyclotomic field $\mathbb{Q}(\zeta_N)$,
where
\begin{equation*}
q=e^{2\pi\mathrm{i}\tau}\quad\textrm{and}\quad\zeta_N=e^{2\pi\mathrm{i}/N}. 
\end{equation*}
Then $\mathcal{F}_N/\mathcal{F}_1$ is
a Galois extension and
\begin{equation}\label{GFF}
\mathrm{Gal}(\mathcal{F}_N/\mathcal{F}_1)
\simeq\mathrm{GL}_2(\mathbb{Z}/N\mathbb{Z})/\langle-I_2\rangle
\end{equation}
(see \cite[Theorem 6.6 and Propositions 6.1 and 6.9 (1)]{Shimura}). 
We use the right action notation of $\mathrm{GL}_2(\mathbb{Z}/N\mathbb{Z})/\langle-I_2
\rangle$ on $\mathcal{F}_N$ as
\begin{equation*}
f\mapsto f^\gamma\quad(\gamma\in\mathrm{GL}_2(\mathbb{Z}/N\mathbb{Z})/\langle-I_2\rangle,~
f\in\mathcal{F}_N). 
\end{equation*}
More precisely,

\begin{proposition}\label{GFaction}
Let $\gamma\in\mathrm{GL}_2(\mathbb{Z}/N\mathbb{Z})/\langle-I_2\rangle$
and $f\in\mathcal{F}_N$. Let
\begin{equation*}
f(\tau)=\sum_{n\gg-\infty}c_nq^{n/N}~\textrm{with}~c_n\in\mathbb{Q}(\zeta_N)
\quad(\tau\in\mathbb{H}) 
\end{equation*}
be the Fourier expansion of $f$. 
\begin{enumerate}
\item[\textup{(i)}] If $\gamma$ is represented by $\begin{bmatrix}1&0\\0&d\end{bmatrix}$ 
with $d\in(\mathbb{Z}/N\mathbb{Z})^\times$, then 
\begin{equation*}
f^\gamma(\tau)=\sum_{n\gg-\infty}c_n^{\sigma_d}q^{n/N}\quad(\tau\in\mathbb{H}),
\end{equation*}
where $\sigma_d$ is the automorphism of $\mathbb{Q}(\zeta_N)$ defined by $\zeta_N^{\sigma_d}=\zeta_N^d$. 
\item[\textup{(ii)}] If $\gamma$ is represented by a matrix $\alpha\in\mathrm{SL}_2(\mathbb{Z})$, then
\begin{equation*}
f^\gamma=f\circ\alpha. 
\end{equation*}
\end{enumerate}
\end{proposition}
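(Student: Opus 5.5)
This proposition is essentially an unpacking of how the isomorphism \eqref{GFF} is constructed, so the plan is to recall that construction rather than to prove anything new. Following \cite[Chapter 6]{Shimura}, I would first show that $\mathcal{F}_N=\mathbb{Q}(j,\,f_v~|~v\in N^{-1}\mathbb{Z}^2/\mathbb{Z}^2,\,v\neq0)$. Here the $f_v$ are the normalized Fricke functions, that is, suitably scaled Weierstrass $\wp$-values at the $N$-torsion points of the lattice $\mathbb{Z}\tau+\mathbb{Z}$. The isomorphism \eqref{GFF} is then the one under which the class of $\gamma$ acts by $f_v\mapsto f_{v\gamma}$ and fixes $j$ (and $\mathcal{F}_1=\mathbb{Q}(j)$). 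With this description, both claims reduce to checking the stated formulas on the generators $f_v$ and then extending to all of $\mathcal{F}_N$ by field automorphism.

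For (ii), take $\alpha\in\mathrm{SL}_2(\mathbb{Z})$. From the transformation law of $\wp$ under $\tau\mapsto\alpha\tau$, which amounts to a change of basis of the lattice, one gets $f_v\circ\alpha=f_{v\alpha}$. So the automorphism $f\mapsto f\circ\alpha$ agrees with the action of $\gamma$ on the generators. Since $f\mapsto f\circ\alpha$ is a field automorphism of $\mathcal{F}_N$ fixing $\mathbb{Q}(j)$, it agrees with $\gamma$ on all of $\mathcal{F}_N$. One should also check that $f\circ\alpha$ still has $q^{1/N}$-coefficients in $\mathbb{Q}(\zeta_N)$. This follows because the generators map to generators, so $f\circ\alpha$ lies in $\mathcal{F}_N$.

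For (i), the map $\sum c_nq^{n/N}\mapsto\sum c_n^{\sigma_d}q^{n/N}$ is a ring automorphism of the field of formal Laurent series in $q^{1/N}$ over $\mathbb{Q}(\zeta_N)$. The key step is computing its effect on $f_v$. The $q$-expansion of $f_{(a_1,a_2)}$ involves $q^{a_1}$ and the roots of unity $e^{2\pi\mathrm{i}a_2}$ with rational coefficients, so applying $\sigma_d$ replaces $e^{2\pi\mathrm{i}a_2}$ by $e^{2\pi\mathrm{i}da_2}$. This yields $f_{(a_1,\,da_2)}=f_{v\left[\begin{smallmatrix}1&0\\0&d\end{smallmatrix}\right]}$. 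Two things then follow: the series map sends $\mathcal{F}_N$ into itself, and it agrees with $\gamma$ on the generators, hence on all of $\mathcal{F}_N$. It remains to confirm that this map acts trivially on $j$, which holds because $j$ has rational Fourier coefficients.

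The main obstacle is really bookkeeping. The identification \eqref{GFF} must be set up so that both descriptions hold simultaneously with the \emph{right} action convention $v\mapsto v\gamma$. It is also necessary to verify that the action of $\mathrm{GL}_2(\mathbb{Z}/N\mathbb{Z})$ is generated by these two types of elements, which holds because $\mathrm{SL}_2(\mathbb{Z})\to\mathrm{SL}_2(\mathbb{Z}/N\mathbb{Z})$ is surjective. In practice I would cite \cite[Theorem 6.6 and Proposition 6.9]{Shimura}, which state exactly these two properties, and add only the routine verification that the paper's right-action convention matches Shimura's.
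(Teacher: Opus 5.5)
Your proposal is correct and takes essentially the paper's approach: the paper proves this proposition only by citing Theorem 6.6 (3) and Proposition 6.21 (3) of Shimura's book. Your generator-level check on the Fricke functions ($f_v\circ\alpha=f_{v\alpha}$ for $\alpha\in\mathrm{SL}_2(\mathbb{Z})$, and $\sigma_d$ on the coefficients turning $f_{(a_1,a_2)}$ into $f_{(a_1,da_2)}$ while fixing $j$) is an unpacking of that same result, and your use of Proposition 6.9 (1) to identify the paper's $\mathcal{F}_N$ with $\mathbb{Q}(j,\,f_v)$ is the needed extra bookkeeping, already implicit in the paper's citation for \eqref{GFF}.
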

\begin{proof}
See \cite[Theorem 6.6 (3) and Proposition 6.21 (3)]{Shimura}. 
\end{proof}

\begin{remark}
When $\gamma\in\mathrm{GL}_2(\mathbb{Z}/N\mathbb{Z})$
with the image $[\gamma]$ in $\mathrm{GL}_2(\mathbb{Z}/N\mathbb{Z})/
\langle-I_2\rangle$, we usually write
$f^\gamma$ for $f^{[\gamma]}$ for simplicity. 
\end{remark}

Let $\tau_\mathcal{O}$ denote the element of $\mathbb{H}$ given by
\begin{equation*}
\tau_\mathcal{O}=
\left\{\begin{array}{cl}
\displaystyle\frac{-1+\sqrt{D}}{2}
& \text{if}~D\equiv1\Mod{4},\vspace{0.1cm}\\
\displaystyle\frac{\sqrt{D}}{2}& \text{if}~D\equiv0\Mod{4}, 
\end{array}\right.
\end{equation*}
so that $\mathcal{O}=\mathbb{Z}\tau_\mathcal{O}+\mathbb{Z}$
(see \cite[Lemma 7.2]{Cox}). 
Furthermore, we let
\begin{equation*}
\mathcal{F}_{N,\,\tau_\mathcal{O}}=\left\{f\in\mathcal{F}_N~|~
\text{$f$ is finite at $\tau_\mathcal{O}$}\right\}.
\end{equation*}
The theory 
of Shimura's canonical models provides the following 
description of the field $K_{\mathcal{O},\,N}$
in terms of special values of modular functions.

\begin{proposition}\label{generation}
We have 
$K_{\mathcal{O},\,N}=
K\left(f(\tau_\mathcal{O})~|~f\in\mathcal{F}_{N,\,\tau_\mathcal{O}}\right)$.
\end{proposition}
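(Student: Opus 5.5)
The plan is to derive this from Shimura's reciprocity law, in the adelic form for the field $\mathcal{F}=\bigcup_N\mathcal{F}_N$ (see \cite[Theorem 6.31]{Shimura} and \cite{Stevenhagen}), combined with the Artin isomorphism $C(\mathcal{O},\,N)\simeq\mathrm{Gal}(K_{\mathcal{O},\,N}/K)$ that characterizes $K_{\mathcal{O},\,N}$.

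\emph{Step 1: the values lie in $K^\mathrm{ab}$.} Put $L_N=K\left(f(\tau_\mathcal{O})~|~f\in\mathcal{F}_{N,\,\tau_\mathcal{O}}\right)$. By the main theorem of complex multiplication, $L_N\subseteq K^\mathrm{ab}$. Moreover, for $s\in\widehat{K}^\times$ one has
\begin{equation*}
f(\tau_\mathcal{O})^{[s^{-1},\,K]}=f^{q_{\tau_\mathcal{O}}(s)}(\tau_\mathcal{O}),
\end{equation*}
where $q_{\tau_\mathcal{O}}:\widehat{K}^\times\rightarrow\mathrm{GL}_2(\widehat{\mathbb{Q}})$ is the regular representation of $\widehat{K}^\times$ with respect to the basis $(\tau_\mathcal{O},\,1)$ of $\mathcal{O}$.

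\emph{Step 2: compute the fixing group of $L_N$.} By Proposition \ref{GFaction} and \eqref{GFF}, the action of $\mathrm{GL}_2(\widehat{\mathbb{Z}})$ on $\mathcal{F}_N$ factors through $\mathrm{GL}_2(\mathbb{Z}/N\mathbb{Z})/\langle -I_2\rangle$. So $s\in\widehat{\mathcal{O}}^\times$ fixes all of $L_N$ exactly when $q_{\tau_\mathcal{O}}(s)\equiv\pm I_2\pmod N$, that is, when $s\equiv\pm1\pmod{N\widehat{\mathcal{O}}}$. For the converse direction, that only such $s$ act trivially, I would use the fact that the stabilizer in $\mathrm{GL}_2(\mathbb{Z}/N\mathbb{Z})/\langle-I_2\rangle$ of the field $\mathcal{F}_{N,\,\tau_\mathcal{O}}$ evaluated at $\tau_\mathcal{O}$ is no larger than this. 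Concretely, one shows that
\begin{equation*}
K\left(j(\tau_\mathcal{O}),\ \text{Fricke functions } f_{v}(\tau_\mathcal{O})~|~v\in N^{-1}\mathbb{Z}^2/\mathbb{Z}^2\right)\subseteq L_N
\end{equation*}
already separates the relevant Galois elements.

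\emph{Step 3: match with the ray class field.} Taking all of $\widehat{K}^\times$ into account, the subgroup of $\widehat{K}^\times$ corresponding to $L_N$ under Artin reciprocity is
\begin{equation*}
K^\times\,\widehat{\mathbb{Q}}^\times\cdots\ \text{(more precisely)}\ \ K^\times\,\mathbb{C}^\times\,\{s\in\widehat{\mathcal{O}}^\times~|~s\equiv1\pmod{N\widehat{\mathcal{O}}}\}.
\end{equation*}
The sign $\pm1$ from Step 2 is absorbed because $-1\in K^\times$. Via the standard identification $\widehat{K}^\times/K^\times\widehat{\mathcal{O}}_N^\times\simeq C(\mathcal{O},\,N)$ (compare \cite[Theorem 3.18]{Schertz}), this is exactly the norm group defining $K_{\mathcal{O},\,N}$. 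Uniqueness in the existence theorem then gives $L_N=K_{\mathcal{O},\,N}$.

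\emph{Main obstacle.} The hardest step is Step 2's converse: showing that no larger subgroup fixes the values. Equivalently, one needs enough functions in $\mathcal{F}_{N,\,\tau_\mathcal{O}}$ that are actually finite at $\tau_\mathcal{O}$. I would handle this with Fricke functions, using them to distinguish ray classes. Alternatively, one can invoke the result directly from \cite[Theorem 4]{Stevenhagen} or \cite{J-K-S-Y}, where it is proved.
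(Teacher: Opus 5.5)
Your framework (Shimura reciprocity on $\mathcal{F}$, reduction to $s\in\widehat{\mathcal{O}}^\times$ via $j(\tau_\mathcal{O})\in L_N$, then uniqueness in the existence theorem) is the standard route. The paper itself gives no argument beyond citing \cite[Theorem 4]{Cho} and \cite[p.~169]{Stevenhagen}. As a proof, however, your proposal has a gap at the one nontrivial point, the converse in Step 2, and the claim you state there is false in two cases.

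Step 2 conflates two conditions. The first is $f^{q_{\tau_\mathcal{O}}(s)}=f$ for all $f\in\mathcal{F}_N$, which holds iff $q_{\tau_\mathcal{O}}(s)\equiv\pm I_2\pmod N$. The second is $f^{q_{\tau_\mathcal{O}}(s)}(\tau_\mathcal{O})=f(\tau_\mathcal{O})$ for all $f\in\mathcal{F}_{N,\tau_\mathcal{O}}$, which is what matters. The subgroup of $\widehat{\mathcal{O}}^\times$ fixing $L_N$ is $\mathcal{O}^\times U_N$, where $U_N=\{s\in\widehat{\mathcal{O}}^\times~|~s\equiv1\pmod{N\widehat{\mathcal{O}}}\}$, not $\pm U_N$. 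For $D=-4$, the element $s=\sqrt{-1}$ lies in $K^\times$, so $[s^{-1},K]$ is trivial on all of $K^{\mathrm{ab}}$; yet $s\not\equiv\pm1\pmod{N\widehat{\mathcal{O}}}$ for $N\geq2$. Likewise $s=\tau_\mathcal{O}$ for $D=-3$.

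This is harmless for Step 3, since $\mathcal{O}^\times\subseteq K^\times$, but your ``only if'' cannot be proved as stated. Moreover, the tool you propose fails in exactly these cases. Fricke functions carry the factor $g_2g_3/\Delta$, and $g_2g_3(\tau_\mathcal{O})=0$ for $D=-3,-4$, so $K(j(\tau_\mathcal{O}),f_v(\tau_\mathcal{O}))=K$ there. For the remaining $D$, the claim that these values ``separate the relevant Galois elements'' is the main theorem of complex multiplication for the $N$-torsion of $\mathbb{C}/\mathcal{O}$. That is the substance of the proposition, and you assert it rather than prove it.

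Here is a uniform fix using only what is already available. Let $s\in\widehat{\mathcal{O}}^\times$ fix $L_N$.
The constant $\zeta_N$ lies in $\mathcal{F}_{N,\tau_\mathcal{O}}$, and Proposition \ref{GFaction} gives $\zeta_N^{g}=\zeta_N^{\det g}$ for $g\in\mathrm{GL}_2(\mathbb{Z}/N\mathbb{Z})$. Hence $N_{K/\mathbb{Q}}(s)=\det q_{\tau_\mathcal{O}}(s)\equiv1\pmod N$.
Lift $q_{\tau_\mathcal{O}}(s)\bmod N$ to some $\alpha\in\mathrm{SL}_2(\mathbb{Z})$. Then Proposition \ref{GFaction} (ii) yields $f(\alpha\tau_\mathcal{O})=f(\tau_\mathcal{O})$ for all $f\in\mathcal{F}_{N,\tau_\mathcal{O}}$.
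Since $\mathcal{F}_N$ is the function field of a model of $X(N)$ over $\mathbb{Q}(\zeta_N)$, any two $\Gamma(N)$-inequivalent points of $\mathbb{H}$ are distinguished by some element of $\mathcal{F}_N$ finite at both. Therefore $\alpha\tau_\mathcal{O}\in\Gamma(N)\tau_\mathcal{O}$.
The stabilizer of $\tau_\mathcal{O}$ in $\mathrm{SL}_2(\mathbb{Z})$ is $q_{\tau_\mathcal{O}}(\mathcal{O}^\times)$, so $\alpha\equiv q_{\tau_\mathcal{O}}(\zeta)\pmod N$ for some $\zeta\in\mathcal{O}^\times$, and hence $s\in\mathcal{O}^\times U_N$.
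Step 3 then goes through once you add the routine check that $K^\times\mathbb{C}^\times U_N$ is the idelic form of the congruence subgroup modulo $\ell_\mathcal{O}N\mathcal{O}_K$ defining $C(\mathcal{O},N)$.
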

\begin{proof}
See \cite[Theorem 4]{Cho} and also \cite[p. 169]{Stevenhagen}.
\end{proof}

\begin{remark}
Note that $K_{\mathcal{O},\,N}$ contains $\zeta_N$. 
\end{remark}

\section {Form class groups isomorphic to ray class groups of orders}

In this section, we recall finite-level form class groups
and describe their group structures in terms of Galois groups.
\par
Let $\mathcal{Q}(D,\,N)$ be the
set of primitive positive definite binary quadratic forms over $\mathbb{Z}$ 
of discriminant $D$ whose coefficients of $x^2$ are relatively prime to $N$, that is,
\begin{equation*}
\mathcal{Q}(D,\,N)=\left\{ax^2+bxy+cy^2\in
\mathbb{Z}[x,\,y]~|~\gcd(a,\,b,\,c)=1,~b^2-4ac=D,~a>0,~\gcd(a,\,N)=1\right\}. 
\end{equation*}
We simply write $\mathcal{Q}(D)$ for 
$\mathcal{Q}(D,\,1)$. 
Specifically, we let
$Q_0=a_0x^2+b_0xy+c_0y^2$ be
the principal form in $\mathcal{Q}(D,\,N)$, namely,
\begin{equation*}
Q_0=a_0x^2+b_0xy+c_0y^2=\left\{\begin{array}{ll}
\displaystyle x^2+xy+\frac{1-D}{4}y^2& \text{if}~D\equiv1\Mod{4},\\
\displaystyle x^2-\frac{D}{4}y^2 & \text{if}~D\equiv0\Mod{4}. 
\end{array}\right.
\end{equation*}
The congruence subgroup
\begin{equation*}
\Gamma_1(N)=\left\{\gamma\in\mathrm{SL}_2(\mathbb{Z})~|~
\gamma\equiv\begin{bmatrix}1& \mathrm{*}\\0&1\end{bmatrix}\Mod{NM_2(\mathbb{Z})}\right\}
\end{equation*}
acts on the right
on the set $\mathcal{Q}(D,\,N)$ as
\begin{equation*}
Q^\alpha=Q\left(\alpha\begin{bmatrix}x\\y\end{bmatrix}\right)
\quad(Q\in\mathcal{Q}(D,\,N),~\alpha\in\Gamma_1(N)),
\end{equation*} 
which naturally yields an equivalence relation $\sim_{\Gamma_1(N)}$ on $\mathcal{Q}(D,\,N)$.
For $Q\in\mathcal{Q}(D,\,N)$ we write
its equivalence class as $[Q]_{\Gamma_1(N)}$.  
Denote the set of equivalence classes by $C_{\Gamma_1(N)}(D,\,N)$, namely,
\begin{equation*}
C_{\Gamma_1(N)}(D,\,N)=\mathcal{Q}(D,\,N)/\sim_{\Gamma_1(N)}
=\left\{[Q]_{\Gamma_1(N)}~|~Q\in\mathcal{Q}(D,\,N)\right\}.
\end{equation*}

\begin{definition}
For $Q=ax^2+bxy+cy^2\in\mathcal{Q}(D)$, we define
$\omega_Q$ to be the zero of the quadratic polynomial $Q(x,\,1)$
lying in $\mathbb{H}$. Moreover, if $Q\in\mathcal{Q}(D,\,N)$, then we let $m_{Q}$
be an element of $\mathrm{GL}_2(\mathbb{Z}/N\mathbb{Z})$ defined by 
\begin{equation*}
m_Q=m_{N,\,Q}=
\begin{bmatrix}1& -a^{-1}\left(\frac{b+b_0}{2}\right)\\
0&a^{-1}\end{bmatrix}
\end{equation*}
where $a^{-1}$ means the inverse of $a$ in $(\mathbb{Z}/N\mathbb{Z})^\times$.
\end{definition}

\begin{remark}\label{QQzz}
Let $Q=ax^2+bxy+cy^2,\,Q'\in\mathcal{Q}(D)$. 
\begin{enumerate}
\item[(i)] Since $b^2-4ac=D=b_0^2-4a_0c_0$,  $\displaystyle\frac{b+b_0}{2}$ 
and $\displaystyle\frac{b-b_0}{2}$ are integers.
\item[(ii)] 
Note that 
\begin{equation*}
\omega_Q=\frac{-b+\sqrt{D}}{2a}\quad\textrm{and}\quad
Q=a(x-\omega_Qy)(x-\overline{\omega}_Qy).
\end{equation*}
One sees that $Q=Q'$ if and only if $\omega_Q=\omega_{Q'}$. 
\item[(iii)] Moreover, if $Q\in\mathcal{Q}(D,\,N)$, then
the lattice $\mathbb{Z}\omega_Q+\mathbb{Z}$
belongs to $I(\mathcal{O},\,N)$ (see \cite[Theorem 7.7 (i) and (7.16)]{Cox}). 
\end{enumerate}
\end{remark}

For $Q\in\mathcal{Q}(D,\,N)$ and $\alpha\in\mathrm{SL}_2(\mathbb{Z})$, 
the notation $Q^\alpha$ means the action of $\alpha$ on $Q\in\mathcal{Q}(D)$. 

\begin{definition}\label{Noperation}
Define a binary operation on $C_{\Gamma_1(N)}(D,\,N)$
as follows: let 
\begin{align*}
\mathcal{C}&=[Q]_{\Gamma_1(N)}~\textrm{with}~
Q=ax^2+bxy+cy^2\in\mathcal{Q}(D,\,N),\\
\mathcal{C}'&=[Q']_{\Gamma_1(N)}~\textrm{with}~
Q'=a'x^2+b'xy+c'y^2\in\mathcal{Q}(D,\,N).
\end{align*}
By \cite[Lemmas 2.3 and 2.25]{Cox}, there exists a matrix $\alpha
=\begin{bmatrix}a_1&a_2\\a_3&a_4\end{bmatrix}\in\mathrm{SL}_2(\mathbb{Z})$ 
so that the form
\begin{equation*}
Q''=Q'^\alpha=a''x^2+b''xy+c''y^2
\end{equation*}
satisfies
\begin{equation*}
\gcd\left(a,\,a'',\frac{b+b''}{2}\right)=1. 
\end{equation*}
And, it follows from \cite[Lemma 3.2]{Cox} that 
there is an integer $B$ satisfying
\begin{align*}
B&\equiv b\Mod{2a},\\
B&\equiv b''\Mod{2a''},\nonumber\\
B^2&\equiv D\Mod{4aa''}.\nonumber
\end{align*}
Construct a Dirichlet composition $Q'''$ of $Q$ and $Q''$ as
\begin{equation*}
Q'''=aa''x^2+Bxy+\frac{B^2-D}{4aa''}y^2. 
\end{equation*}
Set
\begin{equation*}
\nu_1=\frac{\omega_{Q'''}}{a_3\omega_{Q'''}+a_4}
\quad\textrm{and}\quad
\nu_2=\frac{1}{a_3\omega_{Q'''}+a_4}
\end{equation*}
One can verify the existence of $(u,\,v)\in\mathbb{Z}^2$ and 
$\sigma\in\mathrm{SL}_2(\mathbb{Z})$ such that
\begin{equation*}
u\nu_1+v\nu_2=1\quad\textrm{and}\quad
\sigma\equiv\begin{bmatrix}\mathrm{*} & \mathrm{*}\\
u & v\end{bmatrix}\Mod{NM_2(\mathbb{Z})}. 
\end{equation*}
We then define
\begin{equation}\label{CCF}
\mathcal{C}\mathcal{C}'=\left[(Q''')^{\sigma^{-1}}\right]_{\Gamma_1(N)}. 
\end{equation}
\end{definition}

The above Definition \ref{Noperation} is given in \cite[Definition 5.7]{J-K-S-Y}, 
whose well-definedness is due to the next proposition. 
Recall from Proposition \ref{generation} that
$K_{\mathcal{O},\,N}=K(f(\tau_\mathcal{O})~|~f\in\mathcal{F}_{N,\,\tau_\mathcal{O}})$.

\begin{proposition}\label{isomorphism}
The binary operation on $C_{\Gamma_1(N)}(D,\,N)$
given in Definition \ref{Noperation}
is well defined and makes $C_{\Gamma_1(N)}(D,\,N)$
a group isomorphic to $C(\mathcal{O},\,N)$ via the mapping\begin{align*}
\phi_N:C_{\Gamma_1(N)}(D,\,N)&\rightarrow C(\mathcal{O},\,N)\\
[Q]_{\Gamma_1(N)}&\mapsto
[\mathbb{Z}\omega_Q+\mathbb{Z}]. 
\end{align*}
Furthermore, the mapping 
\begin{align*}
\sigma_N:C_{\Gamma_1(N)}(D,\,N)&\rightarrow\mathrm{Gal}(K_{\mathcal{O},\,N}/K)\\
[Q]_{\Gamma_1(N)}&\mapsto
\bigg(f(\tau_\mathcal{O})\mapsto f^{m_Q}(-\overline{\omega}_Q)~|~
f\in\mathcal{F}_{N,\,\tau_\mathcal{O}}\bigg)
\end{align*}
is a well-defined isomorphism,
where $\overline{\,\cdot\,}$ means
the complex conjugation.  
\end{proposition}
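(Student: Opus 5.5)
The plan is to first establish that $\phi_N$ is a well-defined bijection of sets, then show that it carries the operation of Definition \ref{Noperation} to multiplication of ideal classes, and finally to identify $\sigma_N$ with the composite of $\phi_N$ and the Artin map via Shimura reciprocity.

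\textbf{Well-definedness and bijectivity of $\phi_N$.} By Remark \ref{QQzz} (iii) the lattice $\mathfrak{a}_Q=\mathbb{Z}\omega_Q+\mathbb{Z}$ lies in $I(\mathcal{O},\,N)$.
\begin{enumerate}
\item[(a)] If $\alpha=\begin{bmatrix}p&q\\r&s\end{bmatrix}\in\Gamma_1(N)$, then $\omega_{Q^\alpha}=\alpha^{-1}\omega_Q$. Hence $\mathfrak{a}_{Q^\alpha}=(r'\omega_Q+s')^{-1}\mathfrak{a}_Q$, where $\begin{bmatrix}r'&s'\end{bmatrix}$ is the bottom row of $\alpha^{-1}$.
\item[(b)] Since $\alpha^{-1}\equiv\begin{bmatrix}1&*\\0&1\end{bmatrix}\Mod{N}$, the element $\lambda=r'\omega_Q+s'$ satisfies $\lambda\equiv1$ modulo $N\mathfrak{a}_Q$. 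Because $a$ is prime to $N$, one can choose an integer multiplier prime to $N$ that clears the denominator of $\lambda$. This shows $\lambda\mathcal{O}$ represents the trivial class of $C(\mathcal{O},\,N)$, so $\phi_N$ is well defined.
\item[(c)] For injectivity, suppose $\mathfrak{a}_{Q'}=\lambda\mathfrak{a}_Q$ with $\lambda\mathcal{O}\in P_1(\mathcal{O},\,N)$. Both $\{\omega_{Q'},1\}$ and $\{\lambda\omega_Q,\lambda\}$ are positively oriented bases of the same lattice, so they differ by some $\alpha\in\mathrm{SL}_2(\mathbb{Z})$. The congruence $\lambda\equiv1$ forces $\alpha\in\Gamma_1(N)$ up to sign, and $\pm$ is harmless.
\item[(d)] For surjectivity, every class in $C(\mathcal{O},\,N)$ contains an integral ideal prime to $N$. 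Taking a suitable scaled oriented basis $\{\omega,1\}$ of it gives a form $Q\in\mathcal{Q}(D,\,N)$ (cf. \cite[Theorem 7.7]{Cox}).
\end{enumerate}

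\textbf{Compatibility with the operation.} The plan is to show $\phi_N(\mathcal{C}\mathcal{C}')=\phi_N(\mathcal{C})\phi_N(\mathcal{C}')$; this gives well-definedness and the group structure at once, since the right side is independent of all choices.
\begin{enumerate}
\item[(a)] Classical Dirichlet composition gives $\mathfrak{a}_Q\mathfrak{a}_{Q''}=\mathfrak{a}_{Q'''}$ (\cite[Lemma 3.2, Theorem 7.7]{Cox}).
\item[(b)] Since $Q''=Q'^\alpha$, we have $\mathfrak{a}_{Q''}=(a_3\omega_{Q''}+a_4)^{-1}\mathfrak{a}_{Q'}$ only up to a scalar. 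This scalar is not $\equiv1\Mod{N}$, and that is the role of $\nu_1,\nu_2$ and $\sigma$.
\item[(c)] The lattice $\mathbb{Z}\nu_1+\mathbb{Z}\nu_2$ is a rescaling of $\mathfrak{a}_{Q'''}$ matching $\mathfrak{a}_Q\mathfrak{a}_{Q'}$ up to principal ideals. The condition $u\nu_1+v\nu_2=1$ together with $\sigma\equiv\begin{bmatrix}*&*\\u&v\end{bmatrix}$ ensures that $\mathfrak{a}_{(Q''')^{\sigma^{-1}}}$ equals $\mathfrak{a}_Q\mathfrak{a}_{Q'}$ times an element $\equiv1\Mod{N}$.
\end{enumerate}
Carrying out the bookkeeping in (c) is the main computational step. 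I would first reduce to the case $\alpha=I_2$, where $\nu_1=\omega_{Q'''}$, $\nu_2=1$ and $\sigma$ can be taken in $\Gamma_1(N)$, and then treat the general $\alpha$ as a change of basis.

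\textbf{The isomorphism $\sigma_N$.} I would show $\sigma_N=\mathrm{Art}\circ\phi_N$ up to the standard inversion, i.e. that $\sigma_N([Q])$ is the Artin symbol of $[\mathfrak{a}_Q]^{-1}$ or of $[\overline{\mathfrak{a}_Q}]$; since $\phi_N$ is an isomorphism, either version suffices.
\begin{enumerate}
\item[(a)] Use Shimura's reciprocity law in Stevenhagen's form \cite{Stevenhagen}. For an idele $x$ whose finite part $x_p$ acts on $\mathcal{O}_p$-bases through a matrix $g_x(\tau_\mathcal{O})\in\mathrm{GL}_2(\widehat{\mathbb{Z}})$, one has $f(\tau_\mathcal{O})^{[x^{-1},K]}=f^{g_x}(\tau_\mathcal{O})$.
\item[(b)] Choose $x$ generating $\overline{\mathfrak{a}_Q}$ locally, with components $\equiv1\Mod{N}$ at $p\nmid N$.
\item[(c)] Decompose $g_x$ modulo $N$ as $m_Q$ times an element of $\mathrm{SL}_2(\mathbb{Z})$ carrying $\tau_\mathcal{O}$ to $-\overline{\omega}_Q$. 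The point $-\overline{\omega}_Q=(b+\sqrt{D})/(2a)$ is the natural generator of $\overline{\mathfrak{a}_Q}$ relative to $\tau_\mathcal{O}$: the matrix $\begin{bmatrix}1&-(b+b_0)/2\\0&a\end{bmatrix}$ relates $a\cdot(-\overline{\omega}_Q)$ to $\tau_\mathcal{O}$, and its inverse modulo $N$ has the shape of $m_Q$.
\item[(d)] Apply Proposition \ref{GFaction} to conclude that the resulting formula is independent of the representative $Q$ and of the finite-value condition. This is what yields well-definedness.
\end{enumerate}
The delicate part of this final step is matching the sign and conjugation conventions so that exactly $m_Q$ and $-\overline{\omega}_Q$ appear.
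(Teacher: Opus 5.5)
Since the paper only cites earlier work here (Theorems 9.4 and 12.3 of J-K-S-Y), your outline has to carry the proof on its own, and two of its steps do not go through as written.

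The bijectivity of $\phi_N$ is essentially fine, with one fix in (d). The scalar relating your ideal to $\mathbb{Z}\omega_Q+\mathbb{Z}$ must itself be $\equiv1\Mod{N}$. For a primitive $\mathfrak{c}$ of norm $a$, the standard basis gives $\mathfrak{c}=a\,\mathfrak{a}_Q$, which puts $\mathfrak{a}_Q$ in the class $[a\mathcal{O}]^{-1}[\mathfrak{c}]$ rather than $[\mathfrak{c}]$. Instead, take a primitive $\beta\in\mathfrak{c}$ with $\beta\equiv1\Mod{N\mathcal{O}}$ and complete it to an oriented basis.

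\textbf{First gap: the composition step.} This is exactly what you leave as bookkeeping, and the facts you offer for it are not the right ones.
\begin{enumerate}
\item[(a)] From $\omega_{Q'}=\alpha(\omega_{Q''})$ one gets the exact identity $\mathfrak{a}_{Q''}=(a_3\omega_{Q''}+a_4)\mathfrak{a}_{Q'}$. It is not the inverse, and it does not hold only ``up to a scalar''. Hence $\mathfrak{a}_Q\mathfrak{a}_{Q'}=(a_3\omega_{Q''}+a_4)^{-1}\mathfrak{a}_{Q'''}$.
\item[(b)] The argument needs $\mathbb{Z}\nu_1+\mathbb{Z}\nu_2$ to be this lattice on the nose. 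Agreement ``up to principal ideals'' says nothing in $C(\mathcal{O},N)$.
\item[(c)] This matters concretely. With the denominators $a_3\omega_{Q'''}+a_4$ as printed in Definition \ref{Noperation}, the choices $(u,v)=(a_3,a_4)$ and $\sigma=\alpha$ are always admissible. Take $D=-20$, $N=3$, $Q=Q'=Q_0$, $\alpha=\begin{bmatrix}1&0\\1&1\end{bmatrix}$ and $B=22$. The output is $5x^2-20xy+21y^2=Q_0^{\gamma}$ with $\gamma=\begin{bmatrix}0&1\\-1&2\end{bmatrix}\notin\pm\Gamma_1(3)$; its ideal is $(\sqrt{-5})^{-1}\mathcal{O}\notin P_1(\mathcal{O},3)$. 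By contrast, $\alpha=I_2$ gives $[Q_0]_{\Gamma_1(3)}$.
\item[(d)] With denominators $a_3\omega_{Q''}+a_4$ the step works. The condition $u\nu_1+v\nu_2=1$ reads $u\omega_{Q'''}+v=a_3\omega_{Q''}+a_4$. If $(u',v')$ is the bottom row of $\sigma$, then $\mathfrak{a}_{(Q''')^{\sigma^{-1}}}=\lambda\,\mathfrak{a}_Q\mathfrak{a}_{Q'}$ with $\lambda=(u\omega_{Q'''}+v)/(u'\omega_{Q'''}+v')$.
\item[(e)] At each $p\mid N$, the numerator generates $\mathfrak{a}_{Q'''}\otimes_{\mathbb{Z}}\mathbb{Z}_p$, since $\mathfrak{a}_Q$ and $\mathfrak{a}_{Q'}$ are locally trivial there. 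Numerator minus denominator lies in $N\mathfrak{a}_{Q'''}$, so $\lambda\in1+N\mathcal{O}_p$. Clearing denominators by an integer $\equiv1\Mod{N}$ then puts $\lambda\mathcal{O}$ in $P_1(\mathcal{O},N)$, as in the proof of Lemma \ref{compatible}.
\end{enumerate}
This handles every admissible $\alpha$ at once. A reduction to $\alpha=I_2$ is not available, because independence of $\alpha$ is part of what well-definedness means.

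\textbf{Second gap: the $\sigma_N$ step uses the wrong form of Shimura reciprocity.}
\begin{enumerate}
\item[(a)] The version you quote, with $g_x\in\mathrm{GL}_2(\widehat{\mathbb{Z}})$, is the unit-idele case. It only reaches $\mathrm{Gal}(K_{\mathcal{O},N}/H_{\mathcal{O}})$. An idele generating $\mathfrak{a}_Q^{\pm1}$ or $\overline{\mathfrak{a}_Q}$ has $\det g_x=N_{K/\mathbb{Q}}(x)$ non-unit at the primes dividing $a$, so it falls outside that version.
\item[(b)] No element of $\mathrm{SL}_2(\mathbb{Z})$ carries $\tau_\mathcal{O}$ to $-\overline{\omega}_Q$ unless $Q$ is in the principal class, because $-\overline{\omega}_Q$ is the root of $ax^2-bxy+cy^2$.
\item[(c)] What is needed is the general law $f(\tau_\mathcal{O})^{[x^{-1},K]}=f^{u}(\beta(\tau_\mathcal{O}))$ for $g_x=u\beta$, with $u\in\mathrm{GL}_2(\widehat{\mathbb{Z}})$ and $\beta\in\mathrm{GL}_2(\mathbb{Q})^+$. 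Apply it with $\beta=\begin{bmatrix}1&\frac{b+b_0}{2}\\0&a\end{bmatrix}$. This $\beta$ sends $\tau_\mathcal{O}$ to $-\overline{\omega}_Q$, and $\beta^{-1}\equiv m_Q\Mod{N}$ exactly; your matrix has the wrong sign.
\item[(d)] Requiring $u\equiv m_Q\Mod{N}$ forces $x_p\in1+N\mathcal{O}_p$ for $p\mid N$ (not $p\nmid N$). It also forces $x\widehat{\mathcal{O}}=a\overline{\mathfrak{a}_Q}\,\widehat{\mathcal{O}}=\mathfrak{a}_Q^{-1}\widehat{\mathcal{O}}$. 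So $\sigma_N([Q])$ is the Artin symbol of $[\mathfrak{a}_Q]$ itself, consistent with the remark after the proposition.
\item[(e)] Neither of your candidates is correct, and they are not interchangeable, since $[\overline{\mathfrak{a}_Q}]=[a\mathcal{O}]^{-1}[\mathfrak{a}_Q]^{-1}$. On $\zeta_N$, $\sigma_N([Q])$ acts as $\zeta_N\mapsto\zeta_N^{\det m_Q}=\zeta_N^{a^{-1}}$; this rules out $[\mathfrak{a}_Q]^{-1}$ unless $a^2\equiv1\Mod{N}$.
\item[(f)] $\sigma_N([Q])$ also sends $j(\tau_\mathcal{O})$ to $j(-\overline{\omega}_Q)=j(\overline{\mathfrak{a}_Q})$. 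Since the Artin symbol of $\mathfrak{b}$ sends $j(\mathcal{O})$ to $j(\overline{\mathfrak{b}})$, this rules out $[\overline{\mathfrak{a}_Q}]$ unless $[\mathfrak{a}_Q]^2=1$ in the ring class group.
\end{enumerate}
Once the correct identity is proved, well-definedness of $\sigma_N$ follows from the reciprocity law and from $\phi_N$, not from Proposition \ref{GFaction}. This covers both finiteness of $f^{m_Q}$ at $-\overline{\omega}_Q$ and independence of the representative.
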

\begin{proof}
See \cite[Theorems 9.4 and 12.3]{J-K-S-Y}.
\end{proof}

\begin{remark}
Roughly speaking 
$\sigma_N$ is obtained by composing 
the Artin map modulo $\ell_\mathcal{O}N\mathcal{O}_K$
and $\phi_N$. 
See \cite[$\S$11--12]{J-K-S-Y} for details.  
\end{remark}

\begin{remark}
Let $j(\mathcal{O})$ be the $j$-invariant of an elliptic curve
with complex multiplication by $\mathcal{O}$. 
\begin{enumerate}
\item[(i)] Let $H_{\mathcal{O}}=K(j(\mathcal{O}))$, 
which is the ring class field of order $\mathcal{O}$.
Let $W_{\mathcal{O},\,N}$ be the Cartan subgroup of
$\mathrm{GL}_2(\mathbb{Z}/N\mathbb{Z})$ obtained from
$(\mathcal{O}/N\mathcal{O})^\times$ through its multiplication action
with respect to the ordered basis
$\{\tau_{\mathcal{O}}+N\mathcal{O},\,1+N\mathcal{O}\}$, and let
$U_{\mathcal{O},\,N}$ be the subgroup corresponding to the image of
$\mathcal{O}^\times$. 
Following Stevenhagen's practical formulation of the Shimura
reciprocity law \cite{Stevenhagen}, the map
\begin{equation*}
W_{\mathcal{O},\,N}/U_{\mathcal{O},\,N}
\rightarrow\mathrm{Gal}(K_{\mathcal{O},\,N}/H_{\mathcal{O}}),
\quad[\gamma]\mapsto
\bigg(f(\tau_{\mathcal{O}})\mapsto f^\gamma(\tau_{\mathcal{O}})
~|~f\in\mathcal{F}_{N,\,\tau_\mathcal{O}}\bigg)~(\gamma\in W_{\mathcal{O},\,N})
\end{equation*}
is a well-defined isomorphism.
\item[(ii)] 
Let $E/\mathbb{Q}(j(\mathcal{O}))$ be an elliptic curve with
complex multiplication by $\mathcal{O}$ and
$j(E)=j(\mathcal{O})$.
The natural action of
$G_{\mathbb{Q}(j(\mathcal{O}))}=\mathrm{Gal}\left(
\overline{\mathbb{Q}(j(\mathcal{O}))}/\mathbb{Q}(j(\mathcal{O}))\right)$
on the $N$-torsion subgroup $E[N]$ of $E$ induces a Galois representation
\begin{equation*}
\rho_{E,\,N}:G_{\mathbb{Q}(j(\mathcal{O}))}\rightarrow
\mathrm{Aut}(E[N])\simeq\mathrm{GL}_2(\mathbb{Z}/N\mathbb{Z}).
\end{equation*}
Here, $\overline{\mathbb{Q}(j(\mathcal{O}))}$ means
the algebraic closure of $\mathbb{Q}(j(\mathcal{O}))$ in $\mathbb{C}$. 
Using the algebraic theory of complex multiplication developed by Deuring \cite{Deuring} and others, 
Lozano-Robledo \cite{Lozano-Robledo} described the image of $\rho_{E,\,N}$ 
up to conjugacy in terms of the Cartan subgroup $W_{\mathcal{O},\,N}$ and its normalizer. 
\end{enumerate}
\end{remark}

\section {Form class groups induced by principal congruence subgroups}

We recall the finite-level form class group
associated with the principal congruence subgroup $\Gamma(N)$  and
its realization as a Galois group over $K(\mathfrak{t})$.
We also give an explicit binary operation on the form class group
compatible with this Galois-theoretic group structure.
\par
Since $K_{\mathcal{O},\,N}$ contains $\zeta_N$, 
we see that
$K_{\mathcal{O},\,N}(\sqrt[N]{\mathfrak{t}})/K_{\mathcal{O},\,N}(\mathfrak{t})$
is a Kummer extension, and
\begin{equation*}
\mathrm{Gal}\left(K_{\mathcal{O},\,N}(\sqrt[N]{\mathfrak{t}})/K_{\mathcal{O},\,N}(\mathfrak{t})\right)
\simeq\mathbb{Z}/N\mathbb{Z}
\end{equation*}
because $\mathfrak{t}$ is transcendental. 
Furthermore, 
\begin{equation*}
\mathrm{Gal}\left(K_{\mathcal{O},\,N}(\sqrt[N]{\mathfrak{t}})/K(\mathfrak{t})\right)
=\mathrm{Gal}\left(K_{\mathcal{O},\,N}(\sqrt[N]{\mathfrak{t}})/K_{\mathcal{O},\,N}(\mathfrak{t})\right)
\rtimes\mathrm{Gal}\left(K_{\mathcal{O},\,N}(\sqrt[N]{\mathfrak{t}})/K(
\sqrt[N]{\mathfrak{t}})\right)
\end{equation*}
(see \cite[Lemma 12.4]{J-K-S-Y}). 
\par
The principal congruence subgroup $\Gamma(N)$ acts on $\mathcal{Q}(D,\,N)$
in a similar way to $\Gamma_1(N)$, which induces
the equivalence relation $\sim_{\Gamma(N)}$ on $\mathcal{Q}(D,\,N)$. 
For $Q\in\mathcal{Q}(D,\,N)$, let $[Q]_{\Gamma(N)}$ be its equivalence class. 
Denote the set of equivalence classes by 
\begin{equation*}
C_{\Gamma(N)}(D,\,N)=\mathcal{Q}(D,\,N)/\sim_{\Gamma(N)}.
\end{equation*}

\begin{proposition}\label{isomorphism2}
The map
\begin{align*}
\psi_N:C_{\Gamma(N)}(D,\,N)&\rightarrow
\mathrm{Gal}\left(K_{\mathcal{O},\,N}(\sqrt[N]{\mathfrak{t}})/K(\mathfrak{t})\right)\\
[Q]_{\Gamma(N)}=
[ax^2+bxy+cy^2]_{\Gamma(N)}&\mapsto
\left(\begin{array}{lcl}
\sqrt[N]{\mathfrak{t}}&\mapsto& 
\zeta_N^{\frac{b-b_0}{2}}\sqrt[N]{\mathfrak{t}},\\
f(\tau_\mathcal{O})&\mapsto&
f^{m_Q}(-\overline{\omega}_Q)\quad(f\in\mathcal{F}_{N,\,\tau_\mathcal{O}})
\end{array}\right)
\end{align*}
is a well-defined bijection. Thus
$C_{\Gamma(N)}(D,\,N)$ can be endowed
with a group structure so that $\psi_N$ becomes an isomorphism. 
\end{proposition}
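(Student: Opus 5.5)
Since $\mathrm{Gal}(K_{\mathcal{O},\,N}(\sqrt[N]{\mathfrak{t}})/K(\mathfrak{t}))$ is the semidirect product of the Kummer part $\mathrm{Gal}(K_{\mathcal{O},\,N}(\sqrt[N]{\mathfrak{t}})/K_{\mathcal{O},\,N}(\mathfrak{t}))\simeq\mathbb{Z}/N\mathbb{Z}$ and $\mathrm{Gal}(K_{\mathcal{O},\,N}(\sqrt[N]{\mathfrak{t}})/K(\sqrt[N]{\mathfrak{t}}))\simeq\mathrm{Gal}(K_{\mathcal{O},\,N}/K)$, an element is determined by its values on $\sqrt[N]{\mathfrak{t}}$ and on $K_{\mathcal{O},\,N}$. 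Because $\mathfrak{t}$ is transcendental over $K_{\mathcal{O},\,N}$, any pair consisting of $\rho\in\mathrm{Gal}(K_{\mathcal{O},\,N}/K)$ and a choice $\sqrt[N]{\mathfrak{t}}\mapsto\zeta_N^k\sqrt[N]{\mathfrak{t}}$ extends uniquely to an automorphism fixing $K(\mathfrak{t})$. So I would write $\psi_N([Q])=(k_Q,\rho_Q)$ with $k_Q=\frac{b-b_0}{2}\bmod N$ and $\rho_Q=\sigma_N([Q]_{\Gamma_1(N)})$ from Proposition \ref{isomorphism}. This reduces the statement to showing that $[Q]_{\Gamma(N)}\mapsto(k_Q\bmod N,\ [Q]_{\Gamma_1(N)})$ is a well-defined bijection onto $\mathbb{Z}/N\mathbb{Z}\times C_{\Gamma_1(N)}(D,\,N)$.

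\emph{Well-definedness.} The second component is clearly well defined, since $\Gamma(N)\subseteq\Gamma_1(N)$. For the first, if $\alpha\in\Gamma(N)$ then $Q^\alpha=a'x^2+b'xy+c'y^2$, where $b'=2a\alpha_1\alpha_2+b(\alpha_1\alpha_4+\alpha_2\alpha_3)+2c\alpha_3\alpha_4$. Writing $\alpha_1\alpha_4+\alpha_2\alpha_3=1+2\alpha_2\alpha_3$ gives
\[
\tfrac{b'-b}{2}=a\alpha_1\alpha_2+b\alpha_2\alpha_3+c\alpha_3\alpha_4\equiv0\Mod{N},
\]
because $\alpha_2\equiv\alpha_3\equiv0\Mod{N}$. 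Hence $k_Q\bmod N$ depends only on the $\Gamma(N)$-class.

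\emph{Bijectivity.} The key computation is the effect of $T^n=\left[\begin{smallmatrix}1&n\\0&1\end{smallmatrix}\right]$: it sends $b\mapsto b+2an$ and keeps $a$ fixed, so $k\mapsto k+an$. Since $\gcd(a,N)=1$, $n$ can be chosen to realize every residue mod $N$. The group $\Gamma_1(N)$ is generated modulo $\Gamma(N)$ by the image of $T$, because $\Gamma_1(N)/\Gamma(N)\simeq\{\left[\begin{smallmatrix}1&*\\0&1\end{smallmatrix}\right]\}\simeq\mathbb{Z}/N\mathbb{Z}$. Therefore each $\Gamma_1(N)$-class splits into $\Gamma(N)$-classes $[Q^{T^n}]_{\Gamma(N)}$ for $n\in\mathbb{Z}/N\mathbb{Z}$. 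Their $k$-values $k_Q+an$ are pairwise distinct mod $N$ and exhaust $\mathbb{Z}/N\mathbb{Z}$. So the fibre of the map over each $\Gamma_1(N)$-class maps bijectively onto $\mathbb{Z}/N\mathbb{Z}$, and these $N$ classes are distinct. Combined with the bijectivity of $\sigma_N$, this gives the bijectivity of $\psi_N$.

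Finally, I must check that the formula in the statement really defines an element of the Galois group over $K(\mathfrak{t})$. The values on $f(\tau_\mathcal{O})$ are exactly $\sigma_N([Q]_{\Gamma_1(N)})$, which is an automorphism of $K_{\mathcal{O},\,N}$ by Proposition \ref{isomorphism}, and the extension to $\sqrt[N]{\mathfrak{t}}$ is free by transcendence. The group structure is then transported along $\psi_N$. The main obstacle, mild as it is, is the transversality step: confirming that the $T^n$-translates are pairwise $\Gamma(N)$-inequivalent and exhaust the $\Gamma_1(N)$-class. This is handled by the invariant $k$ together with the coset decomposition $\Gamma_1(N)=\bigsqcup_n \Gamma(N)T^n$.
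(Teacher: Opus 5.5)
Your proof is correct given Proposition \ref{isomorphism}. The description of the Galois group by pairs (Kummer exponent, restriction to $K_{\mathcal{O},\,N}$), the $\Gamma(N)$-invariance of $\frac{b-b_0}{2}\bmod N$, and the $T^n$-count showing that each $\Gamma_1(N)$-class contains exactly $N$ classes of $C_{\Gamma(N)}(D,\,N)$ with distinct exponents together yield the bijection. The paper gives no argument of its own and only cites \cite[Corollary 12.6]{J-K-S-Y}, so your proof supplies the omitted details. It uses the ingredients the paper points to: the semidirect splitting of \cite[Lemma 12.4]{J-K-S-Y}, the mod-$2N$ invariance of the middle coefficient as in \cite[Lemma 12.5]{J-K-S-Y}, and Remark \ref{operationremark}~(i).
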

\begin{proof}
See \cite[Corollary 12.6]{J-K-S-Y}. 
\end{proof}

\begin{remark}\label{operationremark}
\begin{enumerate}
\item[(i)] We see from Propositions \ref{isomorphism} and \ref{isomorphism2} that
\begin{equation*}
\psi_N([Q]_{\Gamma(N)})|_{K_{\mathcal{O},\,N}}=\sigma_N
([Q]_{\Gamma_1(N)})\quad(Q\in\mathcal{Q}(D,\,N)). 
\end{equation*}
\item[(ii)] Let 
\begin{align*}
\mathcal{C}&=[Q]_{\Gamma(N)}~\textrm{with}~
Q=ax^2+bxy+cy^2\in\mathcal{Q}(D,\,N),\\
\mathcal{C}'&=[Q']_{\Gamma(N)}~\textrm{with}~
Q'=a'x^2+b'xy+c'y^2\in\mathcal{Q}(D,\,N).
\end{align*}
We find by Proposition \ref{isomorphism2} that
\begin{align*}
\left(\sqrt[N]{\mathfrak{t}}\right)^{\psi_N(\mathcal{C}\mathcal{C}')}&=
\left(\left(\sqrt[N]{\mathfrak{t}}\right)^{\psi_N(\mathcal{C})}\right)
^{\psi_N(\mathcal{C}')}\quad\textrm{because $\psi_N$ is a homomorphism}\\
&=\left(\zeta_N^{\frac{b-b_0}{2}}\sqrt[N]{\mathfrak{t}}\right)
^{\psi_N(\mathcal{C}')}\\
&=\left(\zeta_N^{\frac{b-b_0}{2}}\right)^{\psi_N(\mathcal{C}')}\left(\sqrt[N]{\mathfrak{t}}\right)
^{\psi_N(\mathcal{C}')}\\
&=\left(\zeta_N^{a'^{-1}\left(\frac{b-b_0}{2}\right)}\right)
\left(\zeta_N^{\frac{b'-b_0}{2}}\sqrt[N]{\mathfrak{t}}\right)
\quad\textrm{by Proposition \ref{GFaction}, where}\\
&\hspace{5.5cm}\textrm{$a'^{-1}$ is the inverse of $a'$ in $(\mathbb{Z}/N\mathbb{Z})^\times$}\\
&=\zeta_N^{a'^{-1}\left(\frac{b-b_0}{2}\right)+\left(\frac{b'-b_0}{2}\right)}\sqrt[N]{\mathfrak{t}}. 
\end{align*}
\end{enumerate}
\end{remark}

Using Definition \ref{Noperation} and Remark \ref{operationremark}, one can present
a well-defined binary operation on $C_{\Gamma(N)}(D,\,N)$ in an explicit way. 

\begin{definition}\label{Noperation2}
Let
\begin{align*}
\mathcal{C}&=[Q]_{\Gamma(N)}~\textrm{with}~
Q=ax^2+bxy+cy^2\in\mathcal{Q}(D,\,N),\\
\mathcal{C}'&=[Q']_{\Gamma(N)}~\textrm{with}~
Q'=a'x^2+b'xy+c'y^2\in\mathcal{Q}(D,\,N).
\end{align*}
Let 
\begin{equation*}
F=\widetilde{a}x^2+\widetilde{b}xy+\widetilde{c}y^2
\end{equation*}
be a form in $\mathcal{Q}(D,\,N)$ obtained by the construction in Definition \ref{Noperation} 
(that is, $F=(Q''')^{\sigma^{-1}}$ given in \eqref{CCF}) so that
\begin{equation*}
[Q]_{\Gamma_1(N)}
\cdot[Q']_{\Gamma_1(N)}=[F]_{\Gamma_1(N)}.
\end{equation*}
Take an integer $\ell$ such that
\begin{equation*}
\widetilde{a}\ell
\equiv
a'^{-1}\left(\frac{b-b_0}{2}\right)
+\left(\frac{b'-b_0}{2}\right)-\left(\frac{\widetilde{b}-b_0}{2}\right)
\Mod{N},
\end{equation*}
where $a'^{-1}$ is the inverse of $a'$ in $(\mathbb{Z}/N\mathbb{Z})^\times$.
If we let $T(\ell)=\begin{bmatrix}
1 & \ell\\0 & 1\end{bmatrix}
\in\Gamma_1(N)$, then we have
\begin{equation*}
F^{T(\ell)}=\widetilde{a}x^2+(\widetilde{b}+2\widetilde{a}\ell)xy
+(\widetilde{a}\ell^2+\widetilde{b}\ell+\widetilde{c})y^2. 
\end{equation*} 
It follows from Proposition \ref{isomorphism2} that
\begin{equation*}
\left(\sqrt[N]{\mathfrak{t}}\right)^{\psi_N([F^{T(\ell)}]_{\Gamma(N)})}=
\zeta_N^{a'^{-1}\left(\frac{b-b_0}{2}\right)+\left(\frac{b'-b_0}{2}\right)}\sqrt[N]{\mathfrak{t}}. 
\end{equation*}
Now, we define a binary operation on $C_{\Gamma(N)}(D,N)$ by letting
\begin{equation*}
\mathcal{C}\cdot\mathcal{C}'=[F^{T(\ell)}]_{\Gamma(N)}.
\end{equation*}
\end{definition}

\begin{remark}
Unlike Definition \ref{Noperation}, this Definition \ref{Noperation2}
does not appear in \cite{J-K-S-Y}. 
\end{remark}

\section {Adelic framed binary quadratic forms}

We introduce adelic framed binary quadratic forms and 
define the corresponding orbit space $\widehat{C}(D)$ under the action of
$\mathrm{SL}_2(\mathbb{Z})$. 
For each $N\in\mathbb{N}$, we define a natural map
$\rho_N:\widehat{C}(D)\rightarrow C_{\Gamma(N)}(D,\,N)$
which sends an adelic framed form class to its associated form class of level $N$.
\par
Define the set of adelic framed binary quadratic forms of discriminant $D$ by
\begin{equation*}
\widehat{\mathcal{Q}}(D)
=\left\{(Q,\,\gamma)\in\mathcal{Q}(D)\times\mathrm{SL}_2(\widehat{\mathbb{Z}})~|~
Q\left(\gamma\begin{bmatrix}1\\0\end{bmatrix}\right)\in
\widehat{\mathbb{Z}}^\times\right\}.
\end{equation*}
Note that $Q\left(\gamma\begin{bmatrix}1\\0\end{bmatrix}\right)$ is
the coefficient of $x^2$ in the binary quadratic form
$Q\left(\gamma\begin{bmatrix}x\\y\end{bmatrix}\right)$.
We regard $\mathrm{SL}_2(\mathbb{Z})$ as a subgroup of
$\mathrm{SL}_2(\widehat{\mathbb{Z}})$ via the diagonal embedding.

\begin{lemma}\label{action}
There is a right action of the modular group $\mathrm{SL}_2(\mathbb{Z})$ on 
the set $\widehat{\mathcal{Q}}(D)$ given by
\begin{equation}\label{rightaction}
(Q,\,\gamma)^\alpha=(Q^\alpha,\,\alpha^{-1}\gamma). 
\end{equation}
\end{lemma}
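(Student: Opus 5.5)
We have to check two things: that the rule \eqref{rightaction} sends $\widehat{\mathcal{Q}}(D)$ into itself, and that it satisfies the axioms of a right action. Throughout we use that $\mathrm{SL}_2(\mathbb{Z})$ acts on the right on $\mathcal{Q}(D)$ by $Q^\alpha=Q\left(\alpha\begin{bmatrix}x\\y\end{bmatrix}\right)$. This is the classical fact that a proper change of variables preserves the discriminant, primitivity and positive definiteness; see \cite{Cox}.

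\textbf{Well-definedness.} Let $(Q,\,\gamma)\in\widehat{\mathcal{Q}}(D)$ and $\alpha\in\mathrm{SL}_2(\mathbb{Z})$. Since $\mathrm{SL}_2(\mathbb{Z})$ embeds diagonally in $\mathrm{SL}_2(\widehat{\mathbb{Z}})$, we have $\alpha^{-1}\gamma\in\mathrm{SL}_2(\widehat{\mathbb{Z}})$, and $Q^\alpha\in\mathcal{Q}(D)$ by the fact above. It remains to check the unit condition. The form $Q$ has integer coefficients, so it extends to a polynomial map $\widehat{\mathbb{Z}}^2\to\widehat{\mathbb{Z}}$, and the identity $Q^\alpha(v)=Q(\alpha v)$ holds for all $v\in\widehat{\mathbb{Z}}^2$. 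Indeed, it is a polynomial identity with integer coefficients, so it survives base change to any commutative ring; alternatively one can check it modulo every $N$. Hence
\begin{equation*}
Q^\alpha\left(\alpha^{-1}\gamma\begin{bmatrix}1\\0\end{bmatrix}\right)
=Q\left(\alpha\alpha^{-1}\gamma\begin{bmatrix}1\\0\end{bmatrix}\right)
=Q\left(\gamma\begin{bmatrix}1\\0\end{bmatrix}\right)\in\widehat{\mathbb{Z}}^\times,
\end{equation*}
so $(Q^\alpha,\,\alpha^{-1}\gamma)\in\widehat{\mathcal{Q}}(D)$.

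\textbf{Action axioms.} The identity matrix clearly acts trivially. For $\alpha,\,\beta\in\mathrm{SL}_2(\mathbb{Z})$ we compute
\begin{equation*}
\left((Q,\,\gamma)^\alpha\right)^\beta=\left((Q^\alpha)^\beta,\,\beta^{-1}\alpha^{-1}\gamma\right).
\end{equation*}
The frame component is $(\alpha\beta)^{-1}\gamma$. For the form component, $(Q^\alpha)^\beta(v)=Q^\alpha(\beta v)=Q(\alpha\beta v)$, so $(Q^\alpha)^\beta=Q^{\alpha\beta}$. Together these give $(Q,\,\gamma)^{\alpha\beta}$, as required.

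\textbf{Main point of care.} The only step needing attention is the evaluation of an integral form at $\widehat{\mathbb{Z}}$-points and its compatibility with composition. Since everything is polynomial with $\mathbb{Z}$-coefficients, this is formal, and no real obstacle is expected.
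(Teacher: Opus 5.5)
Your proposal is correct and follows the paper's proof: check the unit condition via $Q^\alpha(\alpha^{-1}\gamma\,e_1)=Q(\gamma\,e_1)$ with $e_1=(1,0)^{T}$, then verify the identity and compatibility axioms $(Q^\alpha)^\beta=Q^{\alpha\beta}$ and $\beta^{-1}\alpha^{-1}\gamma=(\alpha\beta)^{-1}\gamma$. Your extra remarks that $Q^\alpha\in\mathcal{Q}(D)$ and that integral forms may be evaluated at $\widehat{\mathbb{Z}}$-points only make explicit what the paper leaves implicit.
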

\begin{proof}
Let $(Q,\,\gamma)\in\widehat{\mathcal{Q}}(D)$ and $\alpha,\,\beta\in\mathrm{SL}_2(\mathbb{Z})$. 
We see that
 $(Q,\,\gamma)^\alpha=(Q^\alpha,\,\alpha^{-1}\gamma)$ again belongs to $\widehat{\mathcal{Q}}(D)$ because 
\begin{equation*}
Q^\alpha\left((\alpha^{-1}\gamma)\begin{bmatrix}1\\0\end{bmatrix}\right)=
Q\left(\alpha(\alpha^{-1}\gamma)\begin{bmatrix}1\\0\end{bmatrix}\right)
=Q\left(\gamma\begin{bmatrix}1\\0\end{bmatrix}\right)\in\widehat{\mathbb{Z}}^\times. 
\end{equation*}
It is straightforward that $(Q,\,\gamma)^{I_2}=(Q,\,\gamma)$. 
Furthermore, 
\begin{equation*}
((Q,\,\gamma)^\alpha)^\beta=(Q^\alpha,\,\alpha^{-1}\gamma)^\beta=
((Q^\alpha)^\beta,\,\beta^{-1}(\alpha^{-1}\gamma))=
(Q^{\alpha\beta},\,(\alpha\beta)^{-1}\gamma)=(Q,\,\gamma)^{\alpha\beta}. 
\end{equation*}
Therefore, \eqref{rightaction} defines a right action of
$\mathrm{SL}_2(\mathbb{Z})$ on $\widehat{\mathcal{Q}}(D)$.
\end{proof}

Let $\widehat{C}(D)$ be the orbit space of $\widehat{\mathcal{Q}}(D)$ 
with respect to the action of $\mathrm{SL}_2(\mathbb{Z})$ defined in Lemma \ref{action}.
Equivalently, $\widehat{C}(D)$ is the quotient of
$\widehat{\mathcal{Q}}(D)$ by the equivalence relation $\sim$ defined
as follows: for $(Q,\,\gamma),\,(Q',\,\gamma')\in\widehat{\mathcal{Q}}(D)$
\begin{equation*}
(Q,\,\gamma)\sim(Q',\,\gamma')\quad\Longleftrightarrow\quad
(Q',\,\gamma')=(Q,\,\gamma)^\alpha~\text{for some}~\alpha\in\mathrm{SL}_2(\mathbb{Z}).
\end{equation*}
If we denote the equivalence class of $(Q,\,\gamma)\in\widehat{\mathcal{Q}}(D)$ by $[(Q,\,\gamma)]$,
then we have
\begin{equation*}
\widehat{C}(D)=\widehat{\mathcal{Q}}(D)/\sim=\{[(Q,\,\gamma)]~|~(Q,\,\gamma)\in\widehat{\mathcal{Q}}(D)\}. 
\end{equation*}
\par
For $\gamma\in\mathrm{SL}_2(\widehat{\mathbb{Z}})$
and $N\in\mathbb{N}$, 
let $\widetilde{\gamma}_N$ denote any element of
$\mathrm{SL}_2(\mathbb{Z})$ satisfying
\begin{equation*}
\widetilde{\gamma}_N\equiv\gamma
\Mod{NM_2(\widehat{\mathbb{Z}})}.
\end{equation*}
Such a lift modulo $N$ always exists since the reduction map
\begin{equation*}
\pi_N:\mathrm{SL}_2(\mathbb{Z})
\rightarrow\mathrm{SL}_2(\mathbb{Z}/N\mathbb{Z})
\end{equation*} 
is a surjective homomorphism; see \cite[Lemma 1.38]{Shimura}. 
For $Q\in\mathcal{Q}(D,\,N)$, we simply write its class in 
$C_{\Gamma(N)}(D,\,N)=\mathcal{Q}(D,\,N)/\sim_{\Gamma(N)}$ as
$[Q]_N$, instead of $[Q]_{\Gamma(N)}$. 

\begin{lemma}\label{rNwell}
Let $(Q,\,\gamma)\in\widehat{\mathcal{Q}}(D)$. Then
$Q^{\widetilde{\gamma}_N}$ belongs to $\mathcal{Q}(D,\,N)$, and the
class $[Q^{\widetilde{\gamma}_N}]_N$ depends only on
$[(Q,\,\gamma)]$.
\end{lemma}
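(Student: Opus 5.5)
The proof has three parts: show the leading coefficient is prime to $N$, check the form is still primitive of discriminant $D$, and then check independence of both choices involved (the lift and the orbit representative).

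\emph{Membership in $\mathcal{Q}(D,\,N)$.} Since $\widetilde{\gamma}_N\in\mathrm{SL}_2(\mathbb{Z})$, the form $Q^{\widetilde{\gamma}_N}$ is properly equivalent to $Q$. Hence it is again primitive, positive definite and of discriminant $D$. It remains to show that its $x^2$-coefficient is prime to $N$. That coefficient is $Q\bigl(\widetilde{\gamma}_N\left[\begin{smallmatrix}1\\0\end{smallmatrix}\right]\bigr)$, an integer. Because $Q$ has integer coefficients and $\widetilde{\gamma}_N\equiv\gamma\Mod{NM_2(\widehat{\mathbb{Z}})}$, we get
\begin{equation*}
Q\left(\widetilde{\gamma}_N\begin{bmatrix}1\\0\end{bmatrix}\right)\equiv Q\left(\gamma\begin{bmatrix}1\\0\end{bmatrix}\right)\Mod{N\widehat{\mathbb{Z}}}.
\end{equation*}
The right side lies in $\widehat{\mathbb{Z}}^\times$, so its image in $\widehat{\mathbb{Z}}/N\widehat{\mathbb{Z}}\simeq\mathbb{Z}/N\mathbb{Z}$ is a unit. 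Therefore the integer coefficient is prime to $N$.

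\emph{Independence of the lift.} Let $\widetilde{\gamma}_N$ and $\widetilde{\gamma}_N'$ be two lifts. Then
\begin{equation*}
\delta=\widetilde{\gamma}_N^{-1}\widetilde{\gamma}_N'\in\mathrm{SL}_2(\mathbb{Z})
\end{equation*}
satisfies $\delta\equiv I_2\Mod{N}$, so $\delta\in\Gamma(N)$. Since the action is a right action, $Q^{\widetilde{\gamma}_N'}=(Q^{\widetilde{\gamma}_N})^{\delta}$. Hence $[Q^{\widetilde{\gamma}_N'}]_N=[Q^{\widetilde{\gamma}_N}]_N$.

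\emph{Independence of the representative.} Replace $(Q,\,\gamma)$ by $(Q,\,\gamma)^\alpha=(Q^\alpha,\,\alpha^{-1}\gamma)$ with $\alpha\in\mathrm{SL}_2(\mathbb{Z})$. The integral matrix $\alpha^{-1}\widetilde{\gamma}_N$ reduces to $\alpha^{-1}\gamma$ modulo $N$, so it is a valid lift of $\alpha^{-1}\gamma$. By the previous step we may use this particular lift. Then
\begin{equation*}
(Q^\alpha)^{\alpha^{-1}\widetilde{\gamma}_N}=Q^{\alpha\alpha^{-1}\widetilde{\gamma}_N}=Q^{\widetilde{\gamma}_N},
\end{equation*}
so the two representatives give literally the same form, and the same class in $C_{\Gamma(N)}(D,\,N)$.

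No step is a real obstacle. The only point needing care is the congruence in the first part: one must compare the integer $Q\bigl(\widetilde{\gamma}_N\left[\begin{smallmatrix}1\\0\end{smallmatrix}\right]\bigr)$ with a $\widehat{\mathbb{Z}}$-value using $\mathbb{Z}/N\mathbb{Z}\simeq\widehat{\mathbb{Z}}/N\widehat{\mathbb{Z}}$, and use that the units of $\widehat{\mathbb{Z}}$ map onto $(\mathbb{Z}/N\mathbb{Z})^\times$.
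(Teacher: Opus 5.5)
Your proof is correct and follows the paper's argument step for step. Reducing the unit $Q\bigl(\gamma\left[\begin{smallmatrix}1\\0\end{smallmatrix}\right]\bigr)$ modulo $N$ gives coprimality of the leading coefficient, two lifts differ by an element of $\Gamma(N)$, and for $(Q^\alpha,\,\alpha^{-1}\gamma)$ one may use the lift $\alpha^{-1}\widetilde{\gamma}_N$. The only difference is that you also spell out that primitivity, positive definiteness and the discriminant are preserved under $\mathrm{SL}_2(\mathbb{Z})$, which the paper leaves implicit.
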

\begin{proof}
Since $Q\left(\gamma\begin{bmatrix}1\\0\end{bmatrix}\right)\in\widehat{\mathbb{Z}}^\times$
and $\widetilde{\gamma}_N\equiv\gamma\Mod{NM_2(\widehat{\mathbb{Z}})}$, 
the integer $Q\left(\widetilde{\gamma}_N\begin{bmatrix}1\\0\end{bmatrix}\right)$ is relatively prime to $N$.
Thus $Q^{\widetilde{\gamma}_N}$ belongs to $\mathcal{Q}(D,\,N)$. 
\par
Note that 
if
$\alpha,\,\beta\in\mathrm{SL}_2(\mathbb{Z})$ satisfy
\begin{equation*}
\alpha\equiv\beta\equiv\gamma\Mod{NM_2(\widehat{\mathbb{Z}})}, 
\end{equation*}
then $\alpha^{-1}\beta\in\Gamma(N)$ and so
\begin{equation*}
[Q^\alpha]_N=[(Q^\alpha)^{\alpha^{-1}\beta}]_N=[Q^\beta]_N.
\end{equation*}
This implies that $[Q^{\widetilde{\gamma}_N}]_N$ does not
depend on the choice of $\widetilde{\gamma}_N$.
\par
Suppose that $(Q',\,\gamma')\in\widehat{\mathcal{Q}}(D)$ satisfies
$(Q,\,\gamma)\sim(Q',\,\gamma')$. Then we have
\begin{equation*}
Q'=Q^\alpha~\text{and}~\gamma'=\alpha^{-1}\gamma~\text{for some}~
\alpha\in\mathrm{SL}_2(\mathbb{Z}). 
\end{equation*}
Observe that
$\widetilde{\gamma}'_N\equiv\alpha^{-1}\widetilde{\gamma}_N
\Mod{NM_2(\mathbb{Z})}$,
and hence
\begin{equation*}
[Q'^{\widetilde{\gamma}_N'}]_N=[(Q^\alpha)^{\alpha^{-1}\widetilde{\gamma}_N}]_N=[Q^{\widetilde{\gamma}_N}]_N.
\end{equation*}
This proves that $[Q^{\widetilde{\gamma}_N}]_N$ depends only on $[(Q,\,\gamma)]$. 
\end{proof}

\begin{definition}
For each $N\in\mathbb{N}$, we define a map
\begin{equation*}
\rho_N:\widehat{C}(D)\rightarrow C_{\Gamma(N)}(D,\,N),
\quad
\end{equation*}
by 
\begin{equation*}
\rho_N([(Q,\,\gamma)])=[Q^{\widetilde{\gamma}_N}]_N\quad((Q,\,\gamma)
\in\widehat{\mathcal{Q}}(D)).
\end{equation*}
By Lemma \ref{rNwell}, $\rho_N$ is well defined.
\end{definition}

\section {Inverse systems}\label{sect:inverse}

We organize the finite-level form class groups
$C_{\Gamma(N)}(D,\,N)$ into an inverse system and 
establish a lifting result used to prove the surjectivity of the inverse-limit map in Section \ref{onetoone}.
\par
For $M,\,N\in\mathbb{N}$ such that $N\,|\,M$, let 
\begin{equation*}
r_{M,\,N}:C_{\Gamma(M)}(D,\,M)\rightarrow
C_{\Gamma(N)}(D,\,N),\quad[Q]_M\mapsto[Q]_N\quad(Q\in\mathcal{Q}(D,\,M)). 
\end{equation*}
Since $\Gamma(M)\subseteq\Gamma(N)$, the map $r_{M,\,N}$ is well defined. 
By Lemma \ref{basic} (i) and Proposition
\ref{isomorphism2}, we get the following commutative diagram:
\begin{figure}[H]
\begin{equation*}
\xymatrixcolsep{5pc}
\xymatrix{
C_{\Gamma(M)}(D,\,M)\ar@{->}[r]^{\hspace{-0.5cm}\sim}_{\hspace{-0.3cm}\psi_M}
\ar@{->}[dd]_{r_{M,\,N}} 
 &\mathrm{Gal}\left(K_{\mathcal{O},\,M}(\sqrt[M]{\mathfrak{t}})/K(\mathfrak{t})\right) \ar@{->>}[dd]^{\textrm{restriction}} \\\\
C_{\Gamma(N)}(D,\,\,N) \ar@{->}[r]^{\hspace{-0.5cm}\sim}_{\hspace{-0.3cm}\psi_N} & 
\mathrm{Gal}\left(K_{\mathcal{O},\,N}(\sqrt[N]{\mathfrak{t}})/K(\mathfrak{t})\right)
}
\end{equation*}
\caption{A commutative diagram of groups of finite levels}
\label{diagram1}
\end{figure}
Thus  $r_{M,\,N}$ is a surjective homomorphism,
and we obtain an inverse system
\begin{equation*}
\{C_{\Gamma(N)}(D,\,N)\}_{N\in\mathbb{N}}
~\textrm{with transition maps $r_{M,\,N}$ for $N\,|\,M$}. 
\end{equation*}

\begin{definition}\label{TPR}
Let
\begin{equation*}
\mathcal{P}=(\mathcal{P}_N)_N\in\displaystyle\varprojlim_NC_{\Gamma(N)}(D,\,N). 
\end{equation*}
Fix a representative $R\in\mathcal{Q}(D)$ of $\mathcal{P}_1$. 
For each $N\in\mathbb{N}$, we define a subset
\begin{equation*}
T(\mathcal{P},\,R,\,N)=\left\{
\beta\in\mathrm{SL}_2(\mathbb{Z}/N\mathbb{Z})~|~
R^{\widetilde{\beta}}\in\mathcal{Q}(D,\,N)~
\text{and}~[R^{\widetilde{\beta}}]_N=\mathcal{P}_N\right\}
\end{equation*}
of $\mathrm{SL}_2(\mathbb{Z}/N\mathbb{Z})$, 
where $\widetilde{\beta}\in\mathrm{SL}_2(\mathbb{Z})$ is any lift of $\beta$. 
Note that the definition of $T(\mathcal{P},\,R,\,N)$ does not 
depend on the choice of $\widetilde{\beta}$. 
\end{definition}

For $M,\,N\in\mathbb{N}$ with $N\,|\,M$, we denote by
\begin{equation*}
\pi_{M,\,N}:\mathrm{SL}_2(\mathbb{Z}/M\mathbb{Z})
\rightarrow\mathrm{SL}_2(\mathbb{Z}/N\mathbb{Z})
\end{equation*}
the natural homomorphism induced from reduction. 

\begin{lemma}\label{TLemma}
With the notation of \textup{Definition \ref{TPR}}, let $M,\,N\in\mathbb{N}$.
\begin{enumerate}
\item[\textup{(i)}] We attain $T(\mathcal{P},\,R,\,N)\neq\emptyset$. 
\item[\textup{(ii)}] If $N\,|\,M$, then $\pi_{M,\,N}$ restricts to 
a well-defined surjection
$T(\mathcal{P},\,R,\,M)\rightarrow
T(\mathcal{P},\,R,\,N)$. 
\end{enumerate}
\end{lemma}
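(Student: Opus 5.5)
For (i), I use the compatibility $r_{N,1}(\mathcal{P}_N)=\mathcal{P}_1$. Choose a representative $Q\in\mathcal{Q}(D,\,N)$ of $\mathcal{P}_N$. Since $[Q]_1=r_{N,1}(\mathcal{P}_N)=\mathcal{P}_1=[R]_1$, and $\Gamma(1)=\mathrm{SL}_2(\mathbb{Z})$, there is $\alpha\in\mathrm{SL}_2(\mathbb{Z})$ with $Q=R^{\alpha}$. Put $\beta=\pi_N(\alpha)$. Then $\alpha$ is itself a lift of $\beta$, so $R^{\alpha}=Q\in\mathcal{Q}(D,\,N)$ and $[R^\alpha]_N=\mathcal{P}_N$. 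Hence $\beta\in T(\mathcal{P},\,R,\,N)$.

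Before (ii), I record why membership in $T(\mathcal{P},\,R,\,N)$ is independent of the lift. Two lifts differ by an element of $\Gamma(N)$. The $x^2$-coefficient of $R^{\widetilde\beta}$ is $R(\widetilde\beta\,{}^t(1,0))$, which modulo $N$ depends only on $\beta$, so the condition $\gcd(\cdot,N)=1$ is lift-independent. The $\Gamma(N)$-class is then unchanged, as in Lemma \ref{rNwell}.

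For (ii), suppose $N\,|\,M$ and take $\beta\in T(\mathcal{P},\,R,\,M)$ with a lift $\widetilde\beta$. This $\widetilde\beta$ is also a lift of $\pi_{M,N}(\beta)$. Since $\gcd(a,M)=1$ implies $\gcd(a,N)=1$, we get $R^{\widetilde\beta}\in\mathcal{Q}(D,\,N)$. Moreover $[R^{\widetilde\beta}]_N=r_{M,N}([R^{\widetilde\beta}]_M)=r_{M,N}(\mathcal{P}_M)=\mathcal{P}_N$, so the restriction is well defined.

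Surjectivity is the main step. Let $\beta\in T(\mathcal{P},\,R,\,N)$ with lift $\widetilde\beta$. By (i), choose $\beta'\in T(\mathcal{P},\,R,\,M)$ with lift $\widetilde{\beta}'$. By well-definedness, $[R^{\widetilde\beta'}]_N=\mathcal{P}_N=[R^{\widetilde\beta}]_N$, so there is $g\in\Gamma(N)$ with $R^{\widetilde\beta g}=R^{\widetilde\beta'}$. Thus $\widetilde\beta'':=\widetilde\beta g\in\mathrm{SL}_2(\mathbb{Z})$ satisfies $R^{\widetilde\beta''}=R^{\widetilde\beta'}$, which lies in $\mathcal{Q}(D,\,M)$ with $M$-class $\mathcal{P}_M$. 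Hence $\pi_M(\widetilde\beta'')\in T(\mathcal{P},\,R,\,M)$. Its image under $\pi_{M,N}$ is $\pi_N(\widetilde\beta g)=\beta$, because $g\equiv I_2\Mod{N}$.

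The only delicate point is matching an arbitrary element at level $N$, rather than just some element. The trick is to transport the level-$M$ witness by a $\Gamma(N)$ element, which leaves the reduction mod $N$ unchanged. I expect no further obstacle.
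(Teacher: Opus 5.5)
Your proposal is correct and matches the paper's proof essentially step for step. For (i) you use $r_{N,1}(\mathcal{P}_N)=\mathcal{P}_1$ and $\Gamma(1)=\mathrm{SL}_2(\mathbb{Z})$. For surjectivity in (ii) you pick a level-$M$ witness via (i) and correct a lift of $\beta$ by an element of $\Gamma(N)$ so that it reproduces that witness; your extra checks (lift-independence of membership in $T(\mathcal{P},\,R,\,N)$ and $\mathcal{Q}(D,\,M)\subseteq\mathcal{Q}(D,\,N)$ for $N\mid M$) are points the paper leaves implicit.
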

\begin{proof}
\begin{enumerate}
\item[(i)] Take a representative $Q\in\mathcal{Q}(D,\,N)$ of $\mathcal{P}_N$.
Since
\begin{equation*}
[R]_1=\mathcal{P}_1=r_{N,\,1}(\mathcal{P}_N)=r_{N,\,1}([Q]_N)=
[Q]_1,
\end{equation*}
we get
$Q=R^\alpha$ for some $\alpha\in\mathrm{SL}_2(\mathbb{Z})$ and so
\begin{equation*}
\mathcal{P}_N=[R^\alpha]_N. 
\end{equation*}
Thus we see that $\pi_N(\alpha)\in T(\mathcal{P},\,R,\,N)$,
and hence $T(\mathcal{P},\,R,\,N)\neq\emptyset$. 
\item[(ii)] Let $\gamma\in T(\mathcal{P},\,R,\,M)$. Then we have $R^{\widetilde{\gamma}}\in\mathcal{Q}(D,\,M)$
and
\begin{equation*}
[R^{\widetilde{\gamma}}]_M=\mathcal{P}_M,
\end{equation*}
where $\widetilde{\gamma}\in\mathrm{SL}_2(\mathbb{Z})$ is a lift of $\gamma$. 
Applying $r_{M,\,N}$ to both sides gives 
\begin{equation*}
[R^{\widetilde{\gamma}}]_N=\mathcal{P}_N. 
\end{equation*}
This shows that 
$\pi_{M,\,N}(\gamma)=\pi_N(\widetilde{\gamma})\in T(\mathcal{P},\,R,\,N)$, and then
the reduction $\pi_{M,\,N}$ induces a well-defined map 
$T(\mathcal{P},\,R,\,M)\rightarrow
T(\mathcal{P},\,R,\,N)$.
\par
To show that  $T(\mathcal{P},\,R,\,M)\rightarrow T(\mathcal{P},\,R,\,N)$ is surjective, 
let $\beta\in T(\mathcal{P},\,R,\,N)$. 
Choose a lift $\widetilde{\beta}\in\mathrm{SL}_2(\mathbb{Z})$ of $\beta$. Then we have
$R^{\widetilde{\beta}}\in\mathcal{Q}(D,\,N)$ and
\begin{equation}\label{RPm}
[R^{\widetilde{\beta}}]_N=\mathcal{P}_N. 
\end{equation}
Take an element 
$\delta\in T(\mathcal{P},\,R,\,M)$ with a lift $\widetilde{\delta}\in\mathrm{SL}_2(\mathbb{Z})$, so
$R^{\widetilde{\delta}}\in\mathcal{Q}(D,\,M)$ and
\begin{equation}\label{RdP}
[R^{\widetilde{\delta}}]_M=\mathcal{P}_M. 
\end{equation} 
Applying $r_{M,\,N}$ to \eqref{RdP} and using \eqref{RPm}, we obtain
\begin{equation*}
[R^{\widetilde{\delta}}]_N=[R^{\widetilde{\beta}}]_N.
\end{equation*}
This implies that
\begin{equation*}
R^{\widetilde{\beta}}=(R^{\widetilde{\delta}})^\alpha\quad\text{for some}~
\alpha\in\Gamma(N),
\end{equation*}
from which we get 
\begin{equation}\label{RRR}
R^{\widetilde{\beta}\alpha^{-1}}=R^{\widetilde{\delta}}\in\mathcal{Q}(D,\,M).
\end{equation}
And, we achieve by \eqref{RdP} and \eqref{RRR} that
\begin{equation*}
\pi_M(\widetilde{\beta}\alpha^{-1})\in T(\mathcal{P},\,R,\,M). 
\end{equation*}
Furthermore, since $\alpha\in\Gamma(N)$, we derive
\begin{equation*}
\pi_N(\widetilde{\beta}\alpha^{-1})=\pi_N(\widetilde{\beta})=\beta. 
\end{equation*}
Therefore the map $T(\mathcal{P},\,R,\,M)\rightarrow T(\mathcal{P},\,R,\,N)$
is surjective. 
\end{enumerate}
\end{proof}

By Lemma \ref{TLemma}, we obtain an inverse system
\begin{equation*}
\{T(\mathcal{P},\,R,\,N)\}_{N\in\mathbb{N}}
\end{equation*}
of nonempty finite sets whose transition maps
are the surjections arising from $\pi_{M,\,N}$
for $N\,|\,M$. 

\section {Binary operation on $\widehat{C}(D)$}\label{binaryoperation}

We define a binary operation on $\widehat{C}(D)$ 
by combining the Gauss-Dirichlet composition of the two quadratic forms 
with an explicit construction of the corresponding adelic frame. 
The well-definedness of this operation will be proved in Section \ref{mainsection}.

\begin{definition}\label{explicit}
Let
\begin{align*}
\mathcal{C}&=[(Q,\,\gamma)]~\textrm{with}~(Q,\,\gamma)\in\widehat{\mathcal{Q}}(D),\\
\mathcal{C}'&=[(Q',\,\gamma')]~\textrm{with}~(Q',\,\gamma')\in\widehat{\mathcal{Q}}(D).
\end{align*}
Associated with $\mathcal{C}$ and $\mathcal{C}'$,
we construct an element $(R,\,\nu)\in\widehat{\mathcal{Q}}(D)$ through the following steps:
\begin{enumerate}
\item[Step 1.] Write
\begin{equation*}
Q_1=Q=a_1x^2+b_1xy+c_1y^2. 
\end{equation*}
Let $\alpha$ be an element of $\mathrm{SL}_2(\mathbb{Z})$ 
so that the form
\begin{equation*}
Q_2=Q'^\alpha=a_2x^2+b_2xy+c_2y^2
\end{equation*}
satisfies
\begin{equation*}
\gcd\left(a_1,\,a_2,\frac{b_1+b_2}{2}\right)=1. 
\end{equation*}
Furthermore, let $B$ be an integer satisfying
\begin{align*}
B\equiv b_1\Mod{2a_1},\quad
B\equiv b_2\Mod{2a_2},\quad
B^2\equiv D\Mod{4a_1a_2}.\nonumber
\end{align*}
Define a Dirichlet composition $R$ of $Q_1$ and $Q_2$ by
\begin{equation*}
R=Ax^2+Bxy+Cy^2~\textrm{with}~
A=a_1a_2~\textrm{and}~C=\frac{B^2-D}{4a_1a_2}. 
\end{equation*}
If we set
\begin{equation*}
\beta_1=\gamma\quad\textrm{and}\quad
\beta_2=\alpha^{-1}\gamma'
\end{equation*}
then we see that
\begin{equation}\label{CQbCQb}
\mathcal{C}=[(Q_1,\,\beta_1)]\quad\textrm{and}\quad
\mathcal{C}'=[(Q_2,\,\beta_2)].
\end{equation}
\item[Step 2.] Write
\begin{equation}\label{beta}
\beta_i^{-1}=\begin{bmatrix}
u_i&v_i\\r_i&s_i\end{bmatrix}
\in\mathrm{SL}_2(\widehat{\mathbb{Z}})\quad(i=1,\,2).  
\end{equation}
And set
\begin{equation*}
\mu_i=s_i+r_i\frac{B-b_i}{2a_i}
\in\widehat{\mathbb{Z}}\quad (i=1,2).
\end{equation*}
Since
\begin{equation*}
\omega_{Q_1}=\frac{B-b_1}{2a_1}+a_2\omega_R
\quad\textrm{and}\quad
\omega_{Q_2}=\frac{B-b_2}{2a_2}+a_1\omega_R,
\end{equation*}
we have
\begin{equation}\label{srma}
s_1+r_1\omega_{Q_1}=\mu_1+a_2r_1\omega_R
\quad\textrm{and}\quad
s_2+r_2\omega_{Q_2}=\mu_2+a_1r_2\omega_R.
\end{equation}
Define $r,\,s\in\widehat{\mathbb{Z}}$ by 
\begin{equation}\label{rands}
r=-Br_1r_2+a_2r_1\mu_2+a_1r_2\mu_1
\quad\textrm{and}\quad
s=\mu_1\mu_2-Cr_1r_2.
\end{equation}
Then one can derive by (\ref{srma}) and the relation 
$A\omega_R^2+B\omega_R+C=0$ that
\begin{equation}\label{srsrsr1}
(s_1+r_1\omega_{Q_1})(s_2+r_2\omega_{Q_2})=s+r\omega_R,
\end{equation}
and similarly 
\begin{equation}\label{srsrsr2}
(s_1+r_1\overline{\omega}_{Q_1})(s_2+r_2\overline{\omega}_{Q_2})=s+r\overline{\omega}_R.
\end{equation}
Write
\begin{equation}\label{Qbhat}
Q_i\left(\beta_i\begin{bmatrix}x\\y\end{bmatrix}\right)=
\widehat{a}_ix^2+\widehat{b}_ixy+\widehat{c}_iy^2
\in\widehat{\mathbb{Z}}[x,y]\quad(i=1,\,2).
\end{equation}
Since $(Q_i,\,\beta_i)\in\widehat{\mathcal{Q}}(D)$, we get
\begin{equation}\label{ai}
\widehat{a}_i\in\widehat{\mathbb{Z}}^\times\quad(i=1,\,2).
\end{equation}
On the other hand, we attain by \eqref{beta} that
\begin{equation*}
\widehat{a}_i=
Q_i(s_i,\,-r_i)\quad (i=1,\,2).
\end{equation*}
It follows from \eqref{srsrsr1}--\eqref{ai} that
\begin{equation}\label{unit}
R(s,\,-r)=\widehat{a}_1\widehat{a}_2\in\widehat{\mathbb{Z}}^\times. 
\end{equation}
Put
\begin{equation*}
\widehat{a}=\widehat{a}_1\widehat{a}_2\in\widehat{\mathbb{Z}}^\times.
\end{equation*}
\item[Step 3.]
By \eqref{unit}, there exist $u_0,\,v_0\in\widehat{\mathbb{Z}}$ such that
\begin{equation}\label{1}
u_0s-v_0r=1.
\end{equation}
For instance, one can take
\begin{equation*}
u_0=\widehat{a}^{-1}(As-Br)\quad
\textrm{and}\quad v_0=-\widehat{a}^{-1}Cr. 
\end{equation*}
Let $m$ be the coefficient of $xy$ in 
\begin{equation*}
R\left(\begin{bmatrix}u_0&v_0\\r&s\end{bmatrix}^{-1}\begin{bmatrix}x\\y\end{bmatrix}\right)
=R\left(\begin{bmatrix}s&-v_0\\-r&u_0\end{bmatrix}\begin{bmatrix}x\\y\end{bmatrix}\right),
\end{equation*}
namely,
\begin{equation}\label{m}
m=-2Av_0s+B(u_0s+v_0r)-2Cu_0r.
\end{equation}
Furthermore, let
\begin{equation}\label{bb2a2}
\widehat{b}=b_0+2\widehat{a}_2^{-1}\left(\frac{\widehat{b}_1-b_0}{2}\right)
+2\left(\frac{\widehat{b}_2-b_0}{2}\right),
\end{equation}
which lies in $\widehat{\mathbb{Z}}$
since the identity
$\widehat{b}_i^2-4\widehat{a}_i\widehat{c}_i=D=b_0^2-4a_0c_0$ 
leads to $\displaystyle\frac{\widehat{b}_i-b_0}{2}\in\widehat{\mathbb{Z}}$ ($i=1,\,2$).
Hence we achieve that
\begin{align*}
m&\equiv B(u_0s+v_0r)\Mod{2\widehat{\mathbb{Z}}}\quad
\textrm{by \eqref{m}}\\
&\equiv B\Mod{2\widehat{\mathbb{Z}}}\quad\textrm{by \eqref{1}}\\
&\equiv b_0\Mod{2\widehat{\mathbb{Z}}}\quad\textrm{because}~
B^2-4AC=D=b_0^2-4a_0c_0\\
&\equiv\widehat{b}\Mod{2\widehat{\mathbb{Z}}}\quad\textrm{due to}~
\frac{\widehat{b}_i-b_0}{2}\in\widehat{\mathbb{Z}}~(i=1,\,2). 
\end{align*}
Combining this with \eqref{ai}, we obtain
\begin{equation*}
k:=\frac{m-\widehat{b}}{2\widehat{a}}\in\widehat{\mathbb{Z}}.
\end{equation*}
Now, put
\begin{equation*}
u=u_0+kr\quad\textrm{and}\quad
v=v_0+ks,
\end{equation*}
and define
\begin{equation*}
\nu=\begin{bmatrix}u&v\\r&s\end{bmatrix}^{-1}
=\begin{bmatrix}s&-v\\-r&u\end{bmatrix}
\in\mathrm{SL}_2(\widehat{\mathbb{Z}}).
\end{equation*}
Then we have by construction 
\begin{equation*}
R\left(\nu\begin{bmatrix}x\\y\end{bmatrix}\right)=
R\left(\begin{bmatrix}s&-v_0\\-r&u_0\end{bmatrix}\begin{bmatrix}1&-k\\0&1\end{bmatrix}\begin{bmatrix}x\\y\end{bmatrix}
\right)=\widehat{a}x^2
+\widehat{b}xy+\widehat{c}y^2~\textrm{with}~
\widehat{c}=\frac{\widehat{b}^2-D}{4\widehat{a}}
\in\widehat{\mathbb{Z}}.
\end{equation*}
\end{enumerate}
For the chosen data, we set
\begin{equation*}
\mathcal{C}\cdot\mathcal{C}'=[(R,\,\nu)].
\end{equation*}
\end{definition}

\section {One-to-one correspondence between $\widehat{C}(D)$
and $\protect\varprojlim_N C_{\Gamma(N)}(D,\,N)$}\label{onetoone}

We establish a one-to-one correspondence between adelic framed form classes of discriminant $D$
and compatible systems of finite-level form classes.

\begin{lemma}\label{Cwelldefined}
If $\mathcal{C}\in\widehat{C}(D)$, then
$(\rho_N(\mathcal{C}))_N\in\varprojlim_N C_{\Gamma(N)}(D,\,N)$. 
\end{lemma}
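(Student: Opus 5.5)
To prove Lemma \ref{Cwelldefined}, we will check the compatibility condition defining the inverse limit. Concretely, for all $M,\,N\in\mathbb{N}$ with $N\,|\,M$ we need
\begin{equation*}
r_{M,\,N}(\rho_M(\mathcal{C}))=\rho_N(\mathcal{C}).
\end{equation*}
The plan is to fix a representative and compare lifts of the frame at the two levels, using the independence-of-lift statement in Lemma \ref{rNwell}.

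First, write $\mathcal{C}=[(Q,\,\gamma)]$ with $(Q,\,\gamma)\in\widehat{\mathcal{Q}}(D)$. Choose a lift $\widetilde{\gamma}_M\in\mathrm{SL}_2(\mathbb{Z})$ with $\widetilde{\gamma}_M\equiv\gamma\Mod{MM_2(\widehat{\mathbb{Z}})}$. By Lemma \ref{rNwell}, $Q^{\widetilde{\gamma}_M}\in\mathcal{Q}(D,\,M)$ and $\rho_M(\mathcal{C})=[Q^{\widetilde{\gamma}_M}]_M$. Since $\gcd(a,\,M)=1$ implies $\gcd(a,\,N)=1$ for the leading coefficient $a$, the form $Q^{\widetilde{\gamma}_M}$ also lies in $\mathcal{Q}(D,\,N)$. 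By the definition of $r_{M,\,N}$, this gives
\begin{equation*}
r_{M,\,N}(\rho_M(\mathcal{C}))=[Q^{\widetilde{\gamma}_M}]_N.
\end{equation*}

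Second, since $N\,|\,M$, we have $MM_2(\widehat{\mathbb{Z}})\subseteq NM_2(\widehat{\mathbb{Z}})$. Hence $\widetilde{\gamma}_M\equiv\gamma\Mod{NM_2(\widehat{\mathbb{Z}})}$, so $\widetilde{\gamma}_M$ is itself an admissible choice of $\widetilde{\gamma}_N$. The class $[Q^{\widetilde{\gamma}_N}]_N$ does not depend on which lift is chosen (Lemma \ref{rNwell}), so
\begin{equation*}
\rho_N(\mathcal{C})=[Q^{\widetilde{\gamma}_M}]_N=r_{M,\,N}(\rho_M(\mathcal{C})).
\end{equation*}
This is exactly the required compatibility.

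There is no real obstacle here. The only point needing care is that $r_{M,\,N}$ is applied to a representative lying in $\mathcal{Q}(D,\,M)$, and that the same representative is legitimately used at level $N$. Both facts are handled by the coprimality observation and the lift independence in Lemma \ref{rNwell}.
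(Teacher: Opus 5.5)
Your proof is correct and is essentially the paper's argument: take a level-$M$ lift $\widetilde{\gamma}_M$, observe that it is also a valid level-$N$ lift because $N\,|\,M$, and conclude $r_{M,\,N}(\rho_M(\mathcal{C}))=[Q^{\widetilde{\gamma}_M}]_N=\rho_N(\mathcal{C})$ by the lift-independence in Lemma \ref{rNwell}. Your explicit check that $Q^{\widetilde{\gamma}_M}\in\mathcal{Q}(D,\,N)$ is a detail the paper leaves implicit.
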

\begin{proof}
We get $\mathcal{C}=[(Q,\,\gamma)]$ for some $(Q,\,\gamma)\in\widehat{\mathcal{Q}}(D)$. 
Let $M,\,N\in\mathbb{N}$ satisfy $N\,|\,M$. 
We derive that
\begin{equation*}
r_{M,\,N}(\rho_M(\mathcal{C}))=r_{M,\,N}([Q^{\widetilde{\gamma}_M}]_M)
=[Q^{\widetilde{\gamma}_M}]_N\\
=[Q^{\widetilde{\gamma}_N}]_N=\rho_N(\mathcal{C})
\end{equation*}
because $\widetilde{\gamma}_M\equiv\widetilde{\gamma}_N\Mod{NM_2(\mathbb{Z})}$. 
And, $(\rho_N(\mathcal{C}))_N$ belongs to $\varprojlim_N C_{\Gamma(N)}(D,\,N)$.
\end{proof}

\begin{definition}\label{rDef}
Using Lemma \ref{Cwelldefined}, we define a map
\begin{equation*}
\rho:\widehat{C}(D)\rightarrow
\varprojlim_N C_{\Gamma(N)}(D,\,N),\quad\mathcal{C}\mapsto(\rho_N(\mathcal{C}))_N.
\end{equation*}
\end{definition}

\begin{lemma}\label{finite}
Let $T$ be a finite subset of $\mathrm{SL}_2(\widehat{\mathbb{Z}})$, and let $\nu\in\mathrm{SL}_2(\widehat{\mathbb{Z}})$.
Suppose that
for each $N\in\mathbb{N}$ there exists $t_N\in T$ such that
\begin{equation}\label{ns}
\nu\equiv t_N\Mod{NM_2(\widehat{\mathbb{Z}})}.
\end{equation}
Then $\nu$ belongs to $T$. 
\end{lemma}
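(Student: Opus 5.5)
The plan is a pigeonhole argument along a cofinal chain of moduli, using that $\widehat{\mathbb{Z}}$ is Hausdorff in its profinite topology; more concretely, $\bigcap_{N\in\mathbb{N}}NM_2(\widehat{\mathbb{Z}})=\{0\}$.

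First restrict to the cofinal sequence $N_k=k!$ for $k\geq1$. By hypothesis, for each $k$ there is some $t_{N_k}\in T$ with $\nu\equiv t_{N_k}\Mod{N_kM_2(\widehat{\mathbb{Z}})}$. Since $T$ is finite, the pigeonhole principle gives a single element $t\in T$ with $t_{N_k}=t$ for infinitely many $k$. Fix such a $t$.

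Next, we show $\nu\equiv t\Mod{NM_2(\widehat{\mathbb{Z}})}$ for every $N\in\mathbb{N}$. Given $N$, choose $k\geq N$ with $t_{N_k}=t$. Then $N\,|\,k!=N_k$, so $N_kM_2(\widehat{\mathbb{Z}})\subseteq NM_2(\widehat{\mathbb{Z}})$, and the congruence modulo $N_k$ implies the congruence modulo $N$.

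Finally, every entry of $\nu-t$ lies in $\bigcap_{N}N\widehat{\mathbb{Z}}$. This intersection is $\{0\}$: an element $x\in\widehat{\mathbb{Z}}=\varprojlim\mathbb{Z}/N\mathbb{Z}$ lying in $N\widehat{\mathbb{Z}}$ has image $0$ in $\mathbb{Z}/N\mathbb{Z}$, because $N\widehat{\mathbb{Z}}$ is the kernel of the projection $\widehat{\mathbb{Z}}\to\mathbb{Z}/N\mathbb{Z}$. So if $x$ lies in every $N\widehat{\mathbb{Z}}$, all its components vanish and $x=0$. Hence $\nu=t\in T$.

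The only point needing care is this kernel identification, $N\widehat{\mathbb{Z}}=\ker(\widehat{\mathbb{Z}}\to\mathbb{Z}/N\mathbb{Z})$. It follows from the product decomposition $\widehat{\mathbb{Z}}\simeq\prod_p\mathbb{Z}_p$: for each prime $p$, $N\mathbb{Z}_p=p^{v_p(N)}\mathbb{Z}_p$. No step is a genuine obstacle. The main thing to get right is to apply the pigeonhole principle along a divisibility-cofinal sequence rather than along an arbitrary subsequence of $\mathbb{N}$.
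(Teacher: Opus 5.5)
Your proposal is correct and is essentially the paper's proof. The paper likewise applies the hypothesis at $N=m!$ and uses the finiteness of $T$ to extract a single $t\in T$ along a strictly increasing sequence $m_1<m_2<\cdots$; it then notes that every $n$ divides $m_j!$ for large $j$ and concludes from $\bigcap_{n\geq1}n\widehat{\mathbb{Z}}=\{0\}$. For that last step you only need the trivial inclusion $N\widehat{\mathbb{Z}}\subseteq\ker(\widehat{\mathbb{Z}}\rightarrow\mathbb{Z}/N\mathbb{Z})$, so the appeal to $\widehat{\mathbb{Z}}\simeq\prod_p\mathbb{Z}_p$ is not actually required.
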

\begin{proof}
By the assumption \eqref{ns} 
applied to $N=m!$ ($m\in\mathbb{N}$)
and the fact $T$ is a finite subset of $\mathrm{SL}_2(\widehat{\mathbb{Z}})$, 
there exist a strictly increasing sequence $m_1<m_2<m_3<\cdots$ of positive integers
and an element $t\in T$ such that
\begin{equation}\label{nsmM}
\nu\equiv t\Mod{m_j!M_2(\widehat{\mathbb{Z}})}~\text{for all}~j\in\mathbb{N}.  
\end{equation}
For each $n\in\mathbb{N}$, we see that
$n\,|\,m_j!$ for sufficiently large $j$, and so
$m_j!M_2(\widehat{\mathbb{Z}})\subseteq
nM_2(\widehat{\mathbb{Z}})$. 
Hence we get by \eqref{nsmM} that
\begin{equation*}
\nu-t\in\bigcap_{n\geq1}nM_2(\widehat{\mathbb{Z}})=\{O_2\}
\end{equation*}
due to the fact $\bigcap_{n\geq1}n\widehat{\mathbb{Z}}=\{0\}$. It then follows that
$\nu=t\in T$. 
\end{proof}

\begin{remark}\label{alternative}
The group $\mathrm{SL}_2(\widehat{\mathbb{Z}})$ carries its natural profinite topology 
through the canonical isomorphism
\begin{equation*}
\mathrm{SL}_2(\widehat{\mathbb{Z}})\simeq\varprojlim_{N\in\mathbb{N}}\mathrm{SL}_2(\mathbb{Z}/N\mathbb{Z}),
\end{equation*}
where $\mathbb{N}$ is directed by divisibility and each 
finite group $\mathrm{SL}_2(\mathbb{Z}/N\mathbb{Z})$
is given the discrete topology. 
Equivalently, this topology agrees with the subspace topology 
inherited from $M_2(\widehat{\mathbb{Z}})\simeq\widehat{\mathbb{Z}}^4$. 
Put
\begin{equation*}
\mathfrak{K}(N)=\mathrm{ker}\big(\mathrm{SL}_2(\widehat{\mathbb{Z}})
\rightarrow\mathrm{SL}_2(\mathbb{Z}/N\mathbb{Z})\big)
=(I_2+NM_2(\widehat{\mathbb{Z}}))\cap\mathrm{SL}_2(\widehat{\mathbb{Z}})
\quad(N\in\mathbb{N}).
\end{equation*}
Then the subgroups $\mathfrak{K}(N)$ form a neighborhood basis of 
the identity $I_2\in\mathrm{SL}_2(\widehat{\mathbb{Z}})$,
and the cosets $\nu\mathfrak{K}(N)$ form a neighborhood basis of $\nu\in
\mathrm{SL}_2(\widehat{\mathbb{Z}})$. 
The condition \eqref{ns} in Lemma \ref{finite} yields that 
$t_N\in\nu\mathfrak{K}(N)$ for each $N\in\mathbb{N}$,
and hence every neighborhood of $\nu$ intersects $T$. 
Moreover, since $\mathrm{SL}_2(\widehat{\mathbb{Z}})$ is Hausdorff,
the finite subset $T$ is closed in $\mathrm{SL}_2(\widehat{\mathbb{Z}})$.
Therefore $\nu$ belongs to $T$. This also gives an alternative
topological proof of Lemma \ref{finite}.
\end{remark}

\begin{proposition}\label{bijective}
The map $\rho$ defined in Definition \ref{rDef} is bijective. 
\end{proposition}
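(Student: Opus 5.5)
Surjectivity and injectivity are handled separately; both reduce to statements about the inverse system $\{T(\mathcal{P},\,R,\,N)\}_N$ of Lemma \ref{TLemma}.

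\emph{Surjectivity.} Let $\mathcal{P}=(\mathcal{P}_N)_N\in\varprojlim_N C_{\Gamma(N)}(D,\,N)$ and fix a representative $R\in\mathcal{Q}(D)$ of $\mathcal{P}_1$.
\begin{enumerate}
\item By Lemma \ref{TLemma}, $\{T(\mathcal{P},\,R,\,N)\}_N$ is an inverse system of nonempty finite sets with surjective transition maps, so its inverse limit is nonempty.
\item Since $\mathrm{SL}_2(\widehat{\mathbb{Z}})\simeq\varprojlim_N\mathrm{SL}_2(\mathbb{Z}/N\mathbb{Z})$, an element of that limit is some $\gamma\in\mathrm{SL}_2(\widehat{\mathbb{Z}})$ with $\gamma\bmod N\in T(\mathcal{P},\,R,\,N)$ for every $N$.
\item Check $(R,\,\gamma)\in\widehat{\mathcal{Q}}(D)$. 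For each $N$, the integer $R(\widetilde{\gamma}_N[1,0]^T)$ is prime to $N$, because $R^{\widetilde{\gamma}_N}\in\mathcal{Q}(D,\,N)$. Since $R(\gamma[1,0]^T)\equiv R(\widetilde{\gamma}_N[1,0]^T)\pmod{N\widehat{\mathbb{Z}}}$, the element $R(\gamma[1,0]^T)$ is a unit modulo every $N$. Taking $N=p$ for each prime $p$ shows it lies in $\widehat{\mathbb{Z}}^\times=\prod_p\mathbb{Z}_p^\times$.
\item Then $\rho_N([(R,\,\gamma)])=[R^{\widetilde{\gamma}_N}]_N=\mathcal{P}_N$ for all $N$ by the definition of $T$, so $\rho([(R,\,\gamma)])=\mathcal{P}$.
\end{enumerate}

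\emph{Injectivity.} Suppose $\rho([(Q,\,\gamma)])=\rho([(Q',\,\gamma')])$.
\begin{enumerate}
\item From $N=1$ we get $[Q]_1=[Q']_1$, so $Q'=Q^{\alpha_0}$ for some $\alpha_0\in\mathrm{SL}_2(\mathbb{Z})$. Replacing $(Q',\,\gamma')$ by $(Q',\,\gamma')^{\alpha_0^{-1}}$, we may assume $Q'=Q$.
\item For each $N$ we have $[Q^{\widetilde{\gamma}_N}]_N=[Q^{\widetilde{\gamma}'_N}]_N$. So there is $\delta\in\Gamma(N)$ with $Q^{\widetilde{\gamma}'_N}=Q^{\widetilde{\gamma}_N\delta}$.
\item Hence $\alpha_N:=\widetilde{\gamma}_N\delta\widetilde{\gamma}'^{-1}_N$ lies in the stabilizer $\mathrm{Stab}(Q)\subseteq\mathrm{SL}_2(\mathbb{Z})$. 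It satisfies $\alpha_N\equiv\gamma\gamma'^{-1}\pmod{NM_2(\widehat{\mathbb{Z}})}$, equivalently $\gamma'\equiv\alpha_N^{-1}\gamma$.
\item $\mathrm{Stab}(Q)$ is finite, since it is isomorphic to the unit group of $\mathcal{O}$ for a positive definite form. Apply Lemma \ref{finite} with $T=\mathrm{Stab}(Q)$ and $\nu=\gamma\gamma'^{-1}$ to get $\gamma\gamma'^{-1}=\alpha\in\mathrm{Stab}(Q)$.
\item Then $\gamma'=\alpha^{-1}\gamma$ and $Q^\alpha=Q$, so $(Q,\,\gamma')=(Q,\,\gamma)^\alpha$ and the two classes agree.
\end{enumerate}

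The main obstacle is the injectivity step. It needs the finiteness of the stabilizer, which is the standard fact that $\mathrm{Stab}(Q)$ has order $2$, $4$ or $6$, to pass from level-$N$ equivalences to a single global $\alpha$ via Lemma \ref{finite}. The other point needing care is the direction of the congruence $\alpha_N\equiv\gamma\gamma'^{-1}$, since the action is on the right.
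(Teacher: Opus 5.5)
Your proof is correct and follows the paper's argument essentially step for step. For injectivity, you normalize to a common form $Q$ using level $1$, produce stabilizer elements $\alpha_N\equiv\gamma\gamma'^{-1}\pmod{NM_2(\widehat{\mathbb{Z}})}$, and apply Lemma \ref{finite} to the finite group $\mathrm{Stab}(Q)$; for surjectivity, you take a point of $\varprojlim_N T(\mathcal{P},\,R,\,N)$ as the frame for $R$, and your reduction-mod-$p$ check that $R(\gamma[1,0]^T)\in\widehat{\mathbb{Z}}^\times$ spells out a step the paper only asserts.
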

\begin{proof}
To show the injectivity of $\rho$, suppose that
\begin{equation}\label{rCrC'CC'}
\rho(\mathcal{C})=\rho(\mathcal{C}')
\quad\textrm{for some}~\mathcal{C},\,\mathcal{C}'\in\widehat{C}(D). 
\end{equation}
If
\begin{equation*}
\mathcal{C}=[(Q,\,\gamma)]~\text{and}~\mathcal{C}'=[(Q',\,\gamma')]~
\text{with}~(Q,\,\gamma),\,(Q',\,\gamma')\in\widehat{\mathcal{Q}}(D), 
\end{equation*}
then we get by \eqref{rCrC'CC'} that
\begin{equation*}
[Q^{\widetilde{\gamma}_N}]_N=
[Q'^{\widetilde{\gamma'}_N}]_N~\text{for all}~N\in\mathbb{N}. 
\end{equation*}
Since $[Q]_1=[Q^{\widetilde{\gamma}_1}]_1=[Q'^{\widetilde{\gamma'}_1}]_1=[Q']_1$,
we have
\begin{equation*}
Q'=Q^\alpha~\text{for some}~\alpha\in\mathrm{SL}_2(\mathbb{Z}). 
\end{equation*}
Then we deduce that in $\widehat{C}(D)$
\begin{equation}\label{QQQQ}
[(Q',\,\gamma')]=[(Q',\,\gamma')^{\alpha^{-1}}]=
[(Q'^{\alpha^{-1}},\,\alpha\gamma')]=
[(Q,\,\gamma'')]~\text{with}~\gamma''=\alpha\gamma'.
\end{equation}
Therefore we obtain by \eqref{rCrC'CC'} that in $\varprojlim_N C_{\Gamma(N)}(D,\,N)$
\begin{equation*}
([Q^{\widetilde{\gamma}_N}]_N)_N=
([Q^{\widetilde{\gamma}''_N}]_N)_N.
\end{equation*}
Hence, for each $N\in\mathbb{N}$ there is
an element $\alpha_N\in\Gamma(N)$ satisfying
$Q^{\widetilde{\gamma}''_N}=(Q^{\widetilde{\gamma}_N})^{\alpha_N}$, and so
\begin{equation*}
Q^{\widetilde{\gamma}''_N\alpha_N^{-1}\widetilde{\gamma}_N^{-1}}=Q. 
\end{equation*}
If we set $\beta_N=\widetilde{\gamma}''_N\alpha_N^{-1}\widetilde{\gamma}_N^{-1}$,
then we achieve
\begin{equation}\label{bS}
\beta_N\in\mathrm{Stab}(Q),
\end{equation}
where $\mathrm{Stab}(Q)$ denotes the stabilizer subgroup of $Q$ in $\mathrm{SL}_2(\mathbb{Z})$. 
Since $\alpha_N\in\Gamma(N)$, we have
\begin{equation}\label{beta_N}
\gamma''\gamma^{-1}\equiv\beta_N\Mod{NM_2(\widehat{\mathbb{Z}})}
\quad\text{for every}~N\in\mathbb{N}. 
\end{equation}
Since $\mathrm{Stab}(Q)$ is a finite set (see 
Remark \ref{QQzz} (ii) and \cite[Proposition 1.5 in Chapter I]{Silverman}),
applying Lemma \ref{finite} to \eqref{bS} and \eqref{beta_N} yields that
\begin{equation}\label{gbg}
\gamma''=\beta\gamma\quad\text{for some}~\beta\in\mathrm{Stab}(Q). 
\end{equation} 
And, we derive that in $\widehat{C}(D)$
\begin{align*}
\mathcal{C}'&=[(Q,\,\gamma'')]\quad\text{by \eqref{QQQQ}}\\
&=[(Q,\,\beta\gamma)]\quad\text{by \eqref{gbg}}\\
&=[(Q^{\beta^{-1}},\,\beta\gamma)]\quad\text{since}~\beta\in\mathrm{Stab}(Q)\\
&=[(Q,\,\gamma)^{\beta^{-1}}]\\
&=\mathcal{C}. 
\end{align*}
Thus $\rho$ is injective.
\par
To prove the surjectivity of $\rho$, let 
\begin{equation*}
\mathcal{P}=(\mathcal{P}_N)_N\in\varprojlim_N C_{\Gamma(N)}(D,\,N)\quad\text{with}~
\mathcal{P}_N\in C_{\Gamma(N)}(D,\,N). 
\end{equation*}
Choose a representative $R\in\mathcal{Q}(D)$ of $\mathcal{P}_1$.
Since the family $\{T(\mathcal{P},\,R,\,N)\}_N$
is an inverse system of nonempty finite sets,
its inverse limit is nonempty (see \cite[Theorem 111]{I-M}). 
Take an element 
\begin{equation*}
\mu=(\mu_N)_N\in\varprojlim_N T(\mathcal{P},\,R,\,N)\quad(\subseteq\varprojlim_N\mathrm{SL}_2(\mathbb{Z}/N\mathbb{Z})).
\end{equation*}
For each $N\in\mathbb{N}$, let $\widetilde{\mu}_N\in\mathrm{SL}_2(\mathbb{Z})$ be
a lift of $\mu_N$. 
Let $\nu$ be the element of $\mathrm{SL}_2(\widehat{\mathbb{Z}})$ 
corresponding to $\mu$, and take $\widetilde{\nu}_N=\widetilde{\mu}_N$. 
Then, since
\begin{equation*}
R^{\widetilde{\nu}_N}
=R^{\widetilde{\mu}_N}\in\mathcal{Q}(D,\,N)\quad(N\in\mathbb{N}),
\end{equation*}
we attain 
\begin{equation*}
R\left(\nu\begin{bmatrix}1\\0\end{bmatrix}\right)\in\widehat{\mathbb{Z}}^\times, 
\end{equation*}
and so $(R,\,\nu)\in\widehat{\mathcal{Q}}(D)$. We then find that
\begin{equation*}
\rho([(R,\,\nu)])=(\rho_N([(R,\,\nu)]))_N
=([R^{\widetilde{\nu}_N}]_N)_N
=([R^{\widetilde{\mu}_N}]_N)_N
=(\mathcal{P}_N)_N
=\mathcal{P}. 
\end{equation*}
This proves that $\rho$ is surjective. 
\end{proof}

\section {The adelic framed form class group of discriminant $D$}\label{mainsection}

Now, we prove that the binary operation introduced in Section \ref{binaryoperation} is well defined and makes $\widehat{C}(D)$ into a group isomorphic to
$\mathrm{Gal}\left(K^\mathrm{ab}(\mathfrak{t}^{1/\infty})/K(\mathfrak{t})\right)$.

\begin{lemma}\label{compatible}
With the notation of Definition \ref{explicit}, we have
\begin{equation*}
[Q_1^{(\widetilde{\beta}_1)_N}]_{\Gamma_1(N)}
[Q_2^{(\widetilde{\beta}_2)_N}]_{\Gamma_1(N)}
=[R^{\widetilde{\nu}_N}]_{\Gamma_1(N)}
~\textrm{for every}~N\in\mathbb{N}.
\end{equation*}
\end{lemma}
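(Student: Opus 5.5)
The plan is to transport the identity to ideal classes via the isomorphism $\phi_N$ of Proposition \ref{isomorphism}. Since $\phi_N$ is a group isomorphism, it suffices to show that
\begin{equation*}
\phi_N\bigl([Q_1^{(\widetilde{\beta}_1)_N}]_{\Gamma_1(N)}\bigr)\,\phi_N\bigl([Q_2^{(\widetilde{\beta}_2)_N}]_{\Gamma_1(N)}\bigr)=\phi_N\bigl([R^{\widetilde{\nu}_N}]_{\Gamma_1(N)}\bigr)
\end{equation*}
in $C(\mathcal{O},\,N)$. By Lemma \ref{rNwell}, each of the three forms $Q_i^{(\widetilde{\beta}_i)_N}$ and $R^{\widetilde{\nu}_N}$ lies in $\mathcal{Q}(D,\,N)$, so both sides make sense.

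\textbf{Step 1: a lattice formula.} Let $g\in\mathrm{SL}_2(\mathbb{Z})$ with $g^{-1}=\begin{bmatrix}u'&v'\\r'&s'\end{bmatrix}$, and let $F\in\mathcal{Q}(D)$. Since $F^g(x,1)=0$ exactly when $g\cdot x=\omega_F$, we get $\omega_{F^g}=g^{-1}\omega_F=(u'\omega_F+v')/(r'\omega_F+s')$. Because $g^{-1}$ is unimodular,
\begin{equation*}
\mathbb{Z}\omega_{F^g}+\mathbb{Z}=(r'\omega_F+s')^{-1}(\mathbb{Z}\omega_F+\mathbb{Z}).
\end{equation*}
The bottom row of $((\widetilde{\beta}_i)_N)^{-1}$ is an integer pair $(\widetilde r_i,\widetilde s_i)\equiv(r_i,s_i)\Mod{N}$. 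Likewise the bottom row of $(\widetilde{\nu}_N)^{-1}$ is $(\widetilde r,\widetilde s)\equiv(r,s)\Mod{N}$, with $r,s$ as in \eqref{rands}. Only these bottom rows enter, so the $k$-adjustment in Step 3 of Definition \ref{explicit} plays no role here; it only matters for the $\Gamma(N)$-refinement.

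\textbf{Step 2: multiply.} Dirichlet composition gives $(\mathbb{Z}\omega_{Q_1}+\mathbb{Z})(\mathbb{Z}\omega_{Q_2}+\mathbb{Z})=\mathbb{Z}\omega_R+\mathbb{Z}$. This is the scaled form of \cite[Lemma 3.2 and Theorem 7.7]{Cox}, where $[a_1,(-b_1+\sqrt D)/2][a_2,(-b_2+\sqrt D)/2]=[a_1a_2,(-B+\sqrt D)/2]$. Set $\lambda_i=\widetilde s_i+\widetilde r_i\omega_{Q_i}$ and $\lambda=\widetilde s+\widetilde r\omega_R$. Then the left side is the class of $(\lambda_1\lambda_2)^{-1}(\mathbb{Z}\omega_R+\mathbb{Z})$ and the right side is the class of $\lambda^{-1}(\mathbb{Z}\omega_R+\mathbb{Z})$. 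So it suffices to show that $\lambda/(\lambda_1\lambda_2)$ generates a principal ideal whose class in $C(\mathcal{O},\,N)$ is trivial. By \eqref{srsrsr1} and \eqref{srma}, the products $\lambda_1\lambda_2$ and $\lambda$ coincide modulo $N$ when computed in $\widehat{\mathbb{Z}}$. More precisely, $a_1a_2\lambda_1\lambda_2$ and $a_1a_2\lambda$ are elements of $\mathcal{O}$ whose images in $\mathcal{O}\otimes\widehat{\mathbb{Z}}$ agree modulo $N$ after the factors $a_i$ are absorbed via $a_2\omega_R$ and $a_1\omega_R$ as in \eqref{srma}.

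\textbf{Step 3: the main obstacle.} The difficulty is that $\lambda_i$ and $\lambda$ need not be integral, and $a_1a_2$ need not be prime to $N$. So I cannot directly say ``$\lambda\equiv\lambda_1\lambda_2\Mod{N\mathcal{O}}$, hence the quotient lies in $P_1(\mathcal{O},\,N)$.'' I would first localize. The norms satisfy $a_i\,N(\lambda_i)=Q_i(\widetilde s_i,-\widetilde r_i)\equiv\widehat a_i\Mod{N}$, which is a unit mod $N$, and similarly $A\,N(\lambda)\equiv\widehat a_1\widehat a_2\Mod{N}$ by \eqref{unit}. Hence at each prime $p\mid N$, the elements $\lambda_i$ and $\lambda$ are units in $\mathcal{O}\otimes\mathbb{Z}_p$ up to the same power of $p$ coming from the $a_i$. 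After cancelling these, the congruence from Step 2 becomes $\lambda/(\lambda_1\lambda_2)\equiv1$ in $(\mathcal{O}\otimes\mathbb{Z}_p)^\times$ modulo $N$. Then I would invoke the standard description of $P_1(\mathcal{O},\,N)$ as the set of principal ideals $\xi\mathcal{O}$ with $\xi\in K^\times$, $\xi\equiv1\ \mathrm{mod}^\times N$ locally at primes dividing $N$ (see \cite[Theorem 3.18]{Schertz}). A cleaner route, which I would try first, is to multiply numerator and denominator by a common integer $c$ with $c\equiv a_1a_2^{\pm1}$ chosen so that both become elements of $\mathcal{O}$ prime to $N$ and congruent mod $N\mathcal{O}$. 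This yields the equality of classes and hence the lemma.
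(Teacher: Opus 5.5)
\textbf{There is a genuine gap in the local step.} Your architecture matches the paper's: you transport to $C(\mathcal{O},N)$ via $\phi_N$, write $\mathfrak{a}_{F_i}=\lambda_i^{-1}\mathfrak{a}_{Q_i}$ and $\mathfrak{a}_G=\lambda^{-1}\mathfrak{a}_R$, use $\mathfrak{a}_{Q_1}\mathfrak{a}_{Q_2}=\mathfrak{a}_R$, and reduce to showing that $\lambda/(\lambda_1\lambda_2)$ generates an ideal in $P_1(\mathcal{O},N)$. But the step you single out as the main obstacle is handled by an argument that fails. Only $\widehat{a}_i=Q_i(s_i,-r_i)$ are units. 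The $a_i$ may share primes with $N$, and then a local generator of $\mathfrak{a}_{Q_i}\otimes\mathbb{Z}_p$ need not be a power of $p$ times a unit.

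For example, take $D=-7$, $N=2$, $Q_1=2x^2-xy+y^2$ and $\gamma=\begin{bmatrix}0&-1\\1&0\end{bmatrix}$, so $\widehat{a}_1=1$. Then $\lambda_1=-\omega_{Q_1}=-1/\overline{\theta}$, where $\theta=(1+\sqrt{-7})/2$ and $2=\theta\overline{\theta}$ is split. So $\lambda_1$ has valuations $0$ and $-1$ at the two primes above $2$, and $a_1\lambda_1=-\theta$ is not a $2$-adic unit.

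Consequently, the congruence you carry, $a_1a_2\lambda_1\lambda_2\equiv a_1a_2\lambda$ modulo $N\mathcal{O}$, only yields $\lambda_1\lambda_2/\lambda-1\in N\,(A\mathfrak{a}_R\otimes\mathbb{Z}_p)^{-1}$. This is strictly larger than $N\mathcal{O}_p$ whenever $p\mid\gcd(A,N)$. Your ``cleaner route'' is also impossible here: an integer multiple $c\lambda$ is prime to $N$ only if $\mathfrak{a}_R\otimes\mathbb{Z}_p=c^{-1}\mathcal{O}_p$, which fails in the example (take $a_2$ odd).

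\textbf{The fix, which is what the paper does,} is to measure the congruences in the lattices themselves and to obtain the local generators from membership in $I(\mathcal{O},N)$.
\begin{enumerate}
\item[(1)] Since $(\widetilde{\beta}_i)_N^{-1}\equiv\beta_i^{-1}$ and $\widetilde{\nu}_N^{-1}\equiv\nu^{-1}\Mod{NM_2(\widehat{\mathbb{Z}})}$, you have $d_i-\lambda_i\in N(\mathfrak{a}_{Q_i}\otimes\mathbb{Z}_p)$ and $d-\lambda\in N(\mathfrak{a}_R\otimes\mathbb{Z}_p)$. Here $d_i=s_i+r_i\omega_{Q_i}$, $d=s+r\omega_R$, and $d_1d_2=d$ by \eqref{srsrsr1}.
\item[(2)] Because $\mathfrak{a}_{F_i}$ and $\mathfrak{a}_G$ lie in $I(\mathcal{O},N)$, they are trivial at each $p\mid N$. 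Thus $\lambda_i$ generates $\mathfrak{a}_{Q_i}\otimes\mathbb{Z}_p$ and $\lambda$ generates $\mathfrak{a}_R\otimes\mathbb{Z}_p$. Your norm computation, combined with local principality of proper ideals, gives the same conclusion; it is the ``power of $p$'' cancellation that is wrong.
\item[(3)] Hence $d_i/\lambda_i$ and $d/\lambda$ lie in $1+N\mathcal{O}_p$, so $\lambda/(\lambda_1\lambda_2)=(\lambda/d)(d_1/\lambda_1)(d_2/\lambda_2)\in1+N\mathcal{O}_p$ for every $p\mid N$.
\item[(4)] Finally, clear denominators of the ratio itself, not of numerator and denominator separately. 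Choose an integer $m\equiv1\Mod{N}$ with $m\lambda/(\lambda_1\lambda_2)\in\mathcal{O}$. Then $m\lambda/(\lambda_1\lambda_2)\equiv1\Mod{N\mathcal{O}}$, and both $m\mathcal{O}$ and $\bigl(m\lambda/(\lambda_1\lambda_2)\bigr)\mathcal{O}$ lie in $P_1(\mathcal{O},N)$.
\end{enumerate}
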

\begin{proof}
Let $N\in\mathbb N$, and put
\begin{equation}\label{FQGR}
 F_i=Q_i^{(\widetilde{\beta}_i)_N}~
(i=1,2)\quad\textrm{and}\quad
 G=R^{\widetilde{\nu}_N}.
\end{equation}
For $Q\in\mathcal{Q}(D,\,N)$, we recall
\begin{equation*}
\mathfrak{a}_Q:=\mathbb Z\omega_Q+\mathbb Z\in I(\mathcal{O},\,N). 
\end{equation*}
By Proposition~3.4, it suffices to prove that in $C(\mathcal{O},\,N)=I(\mathcal{O},\,N)/P_1(\mathcal{O},\,N)$
\begin{equation*}
[\mathfrak{a}_{F_1}\mathfrak a_{F_2}]=
[\mathfrak{a}_G].
\end{equation*}
Write
\begin{equation*}
(\widetilde{\beta}_i)_N^{-1}
=\begin{bmatrix}
u_{i,\,N} & v_{i,\,N}\\
r_{i,\,N} & s_{i,\,N}
\end{bmatrix}~(i=1,\,2)
\quad\textrm{and}\quad
\widetilde{\nu}_N^{-1}
=\begin{bmatrix}u_N & v_N\\r_N & s_N
\end{bmatrix},
\end{equation*}
and set
\begin{equation*}
d_{i,\,N}=s_{i,\,N}+r_{i,\,N}\omega_{Q_i}
~(i=1,\,2)\quad\textrm{and}\quad d_N=s_N+r_N\omega_R.
\end{equation*}
We find by \eqref{FQGR} that
\begin{equation*}
\omega_{F_i}=\omega_{Q_i^{(\widetilde{\beta}_i)_N}}=
(\widetilde{\beta}_i)_N^{-1}(\omega_{Q_i})=
\frac{u_{i,\,N}\omega_{Q_i}+v_{i,\,N}}
{r_{i,\,N}\omega_{Q_i}+s_{i,\,N}}=
\frac{u_{i,\,N}\omega_{Q_i}+v_{i,\,N}}
{d_{i,\,N}}
 \quad (i=1,2)
\end{equation*}
and
\begin{equation*}
\omega_G=\omega_{R^{\widetilde{\nu}_N}}=
\widetilde{\nu}_N^{-1}(\omega_R)=
\frac{u_N\omega_R+v_N}{r_N\omega_R+s_N}=
\frac{u_N\omega_R+v_N}{d_N}.
\end{equation*}
It follows from the fact 
$(\widetilde{\beta}_i)_N^{-1},\,\widetilde{\nu}_N^{-1}
\in\mathrm{SL}_2(\mathbb{Z})$ that
\begin{align}
\mathfrak a_{F_i}&=
d_{i,\,N}^{-1}\mathfrak{a}_{Q_i}
\quad(i=1,\,2),\label{ada}\\
\mathfrak{a}_G&=d_N^{-1}\mathfrak{a}_R.\label{ada2}
\end{align}
Recall the notations \eqref{beta} and \eqref{rands}. If we let
\begin{equation*}
d_i=s_i+r_i\omega_{Q_i}
~(i=1,\,2)\quad\textrm{and}\quad
d=s+r\omega_R,
\end{equation*}
then \eqref{srsrsr1} gives
\begin{equation}\label{ddd}
d_1d_2=d.
\end{equation}
Since $R$ is a Dirichlet composition of $Q_1$ and $Q_2$, 
we get
\begin{equation}\label{aQaQaR}
\mathfrak{a}_{Q_1}\mathfrak{a}_{Q_2}=\mathfrak{a}_R
\end{equation}
(see \cite[(7.13)]{Cox}). 
Put
\begin{equation*}
\lambda_N=\frac{d_N}{d_{1,\,N}d_{2,\,N}}\quad(\in K).
\end{equation*}
Then we attain by \eqref{ada}--\eqref{aQaQaR} that
\begin{equation}\label{lOaaa}
\lambda_N\mathcal{O}=\mathfrak{a}_{F_1}\mathfrak{a}_{F_2}\mathfrak{a}_G^{-1}.
\end{equation}
Thus $\lambda_N\mathcal{O}$ belongs to $I(\mathcal{O},\,N)$, and hence
there exists a positive integer $m$ satisfying
\begin{equation}\label{mlm1N}
\gcd(m,\,N)=1\quad\textrm{and}\quad m\lambda_N\in\mathcal{O}.
\end{equation}
By replacing $m$ with 
a suitable positive power, we may further assume that
\begin{equation}\label{mlm1N2}
m\equiv1\Mod{N}.
\end{equation}
\par
Let $p$ be a prime divisor of $N$, and put
\begin{equation*}
K_p=K\otimes_\mathbb{Q}\mathbb{Q}_p\quad\textrm{and}
\quad
\mathcal{O}_p=\mathcal{O}\otimes_{\mathbb{Z}}\mathbb{Z}_p.
\end{equation*}
Since $\mathfrak{a}_{F_1},\,\mathfrak{a}_{F_2},\,\mathfrak{a}_G\in
I(\mathcal{O},\,N)$, one sees that in $K_p$
\begin{equation*}
\mathfrak a_{F_i}\otimes_{\mathbb{Z}}\mathbb{Z}_p
=\mathcal{O}_p~(i=1,\,2)
\quad\textrm{and}\quad
\mathfrak{a}_G\otimes_{\mathbb{Z}}\mathbb{Z}_p
=\mathcal{O}_p.
\end{equation*}
It then follows from \eqref{ada} and \eqref{ada2} that
\begin{align}
\mathfrak{a}_{Q_i}\otimes_{\mathbb{Z}}\mathbb{Z}_p
&=d_{i,\,N}\mathcal{O}_p\quad(i=1,\,2),\label{aZdO}\\
\mathfrak{a}_R\otimes_{\mathbb{Z}}\mathbb{Z}_p
 &=d_N\mathcal{O}_p.\label{aZdO2}
\end{align}
Now, we achieve that for $i=1,\,2$
\begin{align*}
d_i-d_{i,\,N}&=(s_i-s_{i,\,N})+(r_i-r_{i,\,N})\omega_{Q_i}\\
&\in N\left(\mathfrak{a}_{Q_i}\otimes_{\mathbb{Z}}\mathbb{Z}_p\right)
\quad\textrm{because}~(\widetilde{\beta}_i)_N^{-1}\equiv\beta_i^{-1}
\Mod{NM_2(\widehat{\mathbb{Z}})}\\
&=Nd_{i,\,N}\mathcal{O}_p\quad\textrm{by \eqref{aZdO}},
\end{align*}
and so
\begin{equation}\label{dd1N}
 \frac{d_i}{d_{i,\,N}}
 \in 1+N\mathcal{O}_p\quad (i=1,\,2).
\end{equation}
In a similar way, one can deduce from the fact 
$\widetilde{\nu}_N^{-1}\equiv\nu^{-1}\Mod{NM_2(\widehat{\mathbb{Z}})}$ and 
\eqref{aZdO2} that
\begin{equation}\label{dd1N2}
 \frac{d}{d_N}\in 1+N\mathcal{O}_p.
\end{equation}
Therefore we obtain by \eqref{ddd}, \eqref{dd1N} and \eqref{dd1N2} that
\begin{equation*}
\lambda_N=\frac{d_N}{d}\cdot\frac{d_1}{d_{1,\,N}}
\cdot\frac{d_2}{d_{2,\,N}}\in 1+N\mathcal{O}_p.
\end{equation*}
Here, since $p$ is an arbitrary prime dividing $N$, we get by \eqref{mlm1N}, \eqref{mlm1N2} and the identity
$\mathcal{O}_p=\mathbb{Z}_p\tau_\mathcal{O}+\mathbb{Z}_p$ that
\begin{equation}\label{ml1}
 m\lambda_N\equiv1\Mod{N\mathcal{O}}.
\end{equation}
\par
Finally, in $C(\mathcal{O},\,N)=I(\mathcal{O},\,N)/P_1(\mathcal{O},\,N)$ we have
\begin{align*}
[\mathfrak{a}_{F_1}\mathfrak{a}_{F_2}]
&=[(\lambda_N\mathcal{O})\mathfrak{a}_G]\quad\textrm{by \eqref{lOaaa}}\\
&=[(m\mathcal{O})^{-1}(m\lambda_N\mathcal{O})\mathfrak{a}_G]\\
&=[\mathfrak{a}_G]
\quad\textrm{since $m\mathcal{O}$  and $m\lambda_N\mathcal{O}$ belong to $P_1(\mathcal{O},\,N)$ by \eqref{mlm1N}, \eqref{mlm1N2} and \eqref{ml1}}. 
\end{align*}
\end{proof}

\begin{lemma}\label{mainlemma}
With the notation of Definition \ref{explicit}, 
we obtain
\begin{equation*}
\rho_N(\mathcal{C})\rho_N(\mathcal{C}')=
[R^{\widetilde{\nu}_N}]_N\quad\textrm{for every}~N\in\mathbb{N}.
\end{equation*}
\end{lemma}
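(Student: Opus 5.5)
The plan is to transport the question to the Galois side via the isomorphism $\psi_N$ of Proposition~\ref{isomorphism2}. Since $\psi_N$ is bijective, two classes in $C_{\Gamma(N)}(D,\,N)$ agree once their images agree on generators of $K_{\mathcal{O},\,N}(\sqrt[N]{\mathfrak{t}})$ over $K(\mathfrak{t})$. These generators are $\sqrt[N]{\mathfrak{t}}$ together with $K_{\mathcal{O},\,N}$, which by Proposition~\ref{generation} is generated by the values $f(\tau_\mathcal{O})$. So I will check the two parts separately.

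First I fix the representatives. By \eqref{CQbCQb} we have $\rho_N(\mathcal{C})=[F_1]_N$ and $\rho_N(\mathcal{C}')=[F_2]_N$, where $F_i=Q_i^{(\widetilde{\beta}_i)_N}$ as in the proof of Lemma~\ref{compatible}. I also put $G=R^{\widetilde{\nu}_N}$. By Lemma~\ref{rNwell}, $G$ lies in $\mathcal{Q}(D,\,N)$ because $(R,\,\nu)\in\widehat{\mathcal{Q}}(D)$.

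For the restriction to $K_{\mathcal{O},\,N}$, I use Remark~\ref{operationremark}~(i): $\psi_N([F]_N)$ restricts to $\sigma_N([F]_{\Gamma_1(N)})$. Since $\psi_N$ and $\sigma_N$ are homomorphisms and restriction is a homomorphism, the restriction of $\psi_N([F_1]_N[F_2]_N)$ equals $\sigma_N([F_1]_{\Gamma_1(N)}[F_2]_{\Gamma_1(N)})$. By Lemma~\ref{compatible} this is $\sigma_N([G]_{\Gamma_1(N)})$, which is the restriction of $\psi_N([G]_N)$. So the two automorphisms agree on $K_{\mathcal{O},\,N}$.

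For the Kummer generator, write $F_i=a_{i,N}x^2+b_{i,N}xy+c_{i,N}y^2$ and $G=\widetilde{a}_Nx^2+\widetilde{b}_Nxy+\widetilde{c}_Ny^2$. The computation in Remark~\ref{operationremark}~(ii) gives
\begin{equation*}
\left(\sqrt[N]{\mathfrak{t}}\right)^{\psi_N([F_1]_N[F_2]_N)}=\zeta_N^{a_{2,N}^{-1}\left(\frac{b_{1,N}-b_0}{2}\right)+\frac{b_{2,N}-b_0}{2}}\sqrt[N]{\mathfrak{t}}.
\end{equation*}
On the other side, $\psi_N([G]_N)$ sends $\sqrt[N]{\mathfrak{t}}$ to $\zeta_N^{(\widetilde{b}_N-b_0)/2}\sqrt[N]{\mathfrak{t}}$. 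Because $(\widetilde{\beta}_i)_N\equiv\beta_i$ modulo $N M_2(\widehat{\mathbb{Z}})$, the coefficients of $F_i$ are congruent modulo $N\widehat{\mathbb{Z}}$ to $\widehat{a}_i,\widehat{b}_i,\widehat{c}_i$ in \eqref{Qbhat}. Likewise, the coefficients of $G$ are congruent to $\widehat{a},\widehat{b},\widehat{c}$, since Step~3 of Definition~\ref{explicit} gives $R(\nu[x,y]^T)=\widehat{a}x^2+\widehat{b}xy+\widehat{c}y^2$.

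The main point needing care is the congruence of the half-differences
\begin{equation*}
\frac{b_{i,N}-b_0}{2}\equiv\frac{\widehat{b}_i-b_0}{2}\Mod{N}
\end{equation*}
in $\widehat{\mathbb{Z}}$, with the same statement for $G$. A congruence of the $b$'s modulo $N$ only gives this modulo $N/2$ when $N$ is even. To handle this, I will use the congruences modulo $2N$ instead. They come from choosing the lifts modulo $2N$; this is legitimate because $[Q^{\widetilde{\gamma}_N}]_N$ does not depend on the choice of lift (Lemma~\ref{rNwell}), so a lift modulo $2N$ is an admissible choice of $\widetilde{\gamma}_N$.

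With this in hand, \eqref{bb2a2} gives
\begin{equation*}
\frac{\widehat{b}-b_0}{2}=\widehat{a}_2^{-1}\,\frac{\widehat{b}_1-b_0}{2}+\frac{\widehat{b}_2-b_0}{2}.
\end{equation*}
Reducing modulo $N$ shows that the two exponents of $\zeta_N$ coincide. Both automorphisms therefore agree on $\sqrt[N]{\mathfrak{t}}$ and on $K_{\mathcal{O},\,N}$, so they are equal. Injectivity of $\psi_N$ then yields $\rho_N(\mathcal{C})\rho_N(\mathcal{C}')=[G]_N=[R^{\widetilde{\nu}_N}]_N$.
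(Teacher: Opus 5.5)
Your proposal is correct and follows the paper's proof essentially step by step: you transport the question to the Galois side via $\psi_N$, compare the restrictions to $K_{\mathcal{O},\,N}$ using Propositions \ref{isomorphism}, \ref{isomorphism2} and Lemma \ref{compatible}, and compare the actions on $\sqrt[N]{\mathfrak{t}}$ via \eqref{bb2a2}. The only variation is that you secure the congruence of the half-differences $(b-b_0)/2$ modulo $N$ by taking lifts modulo $2N$ (legitimate by lift-independence, and the same device the paper uses in proving Theorem \ref{Shimurarec}), whereas the paper argues with $\mu\equiv I_2\Mod{NM_2(\widehat{\mathbb{Z}})}$ and cites \cite[Lemma 12.5]{J-K-S-Y}; in fact lifts modulo $N$ already suffice, because when $ps-qr=1$ the $xy$-coefficient of $Q(px+qy,\,rx+sy)$ is $b+2(apq+bqr+crs)$, so the half-difference is itself a polynomial in the matrix entries.
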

\begin{proof}
Let $N\in\mathbb{N}$. We get by \eqref{CQbCQb} that
\begin{equation}\label{rQNrQN}
\rho_N(\mathcal{C})=[Q_1^{(\widetilde{\beta}_1)_N}]_N
\quad\textrm{and}\quad\rho_N(\mathcal{C}')=[Q_2^{(\widetilde{\beta}_2)_N}]_N.
\end{equation}
Write
\begin{equation}\label{RCRCR}
Q_1^{(\widetilde{\beta}_1)_N}=ax^2+bxy+cy^2,\quad
Q_2^{(\widetilde{\beta}_2)_N}=a'x^2+b'xy+c'y^2,\quad
R^{\widetilde{\nu}_N}=\widetilde{a}x^2+\widetilde{b}xy+\widetilde{c}y^2. 
\end{equation}
Let
\begin{equation*}
\sigma_N:C_{\Gamma_1(N)}(D,\,N)\stackrel{\sim}{\rightarrow}
\mathrm{Gal}(K_{\mathcal{O},\,N}/K)
\end{equation*}
and
\begin{equation*}
\psi_N:C_{\Gamma(N)}(D,\,N)\stackrel{\sim}{\rightarrow}
\mathrm{Gal}\left(K_{\mathcal{O},\,N}(\sqrt[N]{\mathfrak{t}})/
K(\mathfrak{t})\right)
\end{equation*}
denote the isomorphisms stated in Propositions \ref{isomorphism} and \ref{isomorphism2}, respectively. 
Then we derive that for any $f\in\mathcal{F}_{N,\,\tau_\mathcal{O}}$
\begin{equation}\label{special}
\begin{aligned}
f(\tau_\mathcal{O})^{\psi_N([R^{\widetilde{\nu}_N}]_N)}&=
f(\tau_\mathcal{O})^{\sigma_N([R^{\widetilde{\nu}_N}]_{\Gamma_1(N)})}
\quad\textrm{by Propositions \ref{isomorphism} and \ref{isomorphism2}}\\
&=f(\tau_\mathcal{O})^{\sigma_N([Q_1^{(\widetilde{\beta}_1)_N}]_{\Gamma_1(N)}
[Q_2^{(\widetilde{\beta}_2)_N}]_{\Gamma_1(N)})}\quad\textrm{by 
Lemma \ref{compatible}}\\
&=\left(f(\tau_\mathcal{O})^{\sigma_N([Q_1^{(\widetilde{\beta}_1)_N}]_{\Gamma_1(N)})}
\right)^{\sigma_N([Q_2^{(\widetilde{\beta}_2)_N}]_{\Gamma_1(N)})}\\
&\hspace{6cm}\textrm{by the homomorphism property of $\sigma_N$}\\
&=\left(f(\tau_\mathcal{O})^{\psi_N([Q_1^{(\widetilde{\beta}_1)_N}]_N)}
\right)^{\psi_N([Q_2^{(\widetilde{\beta}_2)_N}]_N)}
\quad\textrm{again by Propositions \ref{isomorphism} and \ref{isomorphism2}}\\
&=f(\tau_\mathcal{O})^{\psi_N(\rho_N(\mathcal{C})
\rho_N(\mathcal{C}'))}\quad\textrm{by \eqref{rQNrQN} and the homomorphism property of $\psi_N$}.
\end{aligned}
\end{equation}
Since $\widetilde{\nu}_N\equiv\nu\Mod{NM_2(\widehat{\mathbb{Z}})}$, 
we attain
\begin{equation*}
R^{\widetilde{\nu}_N}=
R\left(\mu\nu\begin{bmatrix}x\\y\end{bmatrix}\right)
~\textrm{with}~\mu=\widetilde{\nu}_N\nu^{-1}\in\mathrm{SL}_2(\widehat{\mathbb{Z}})~
\textrm{satisfying}~\mu\equiv I_2\Mod{NM_2(\widehat{\mathbb{Z}})}. 
\end{equation*}
Thus we find that
\begin{equation*}
\frac{\widetilde{b}-b_0}{2}\equiv
\frac{\widehat{b}-b_0}{2}\Mod{N\widehat{\mathbb{Z}}}, 
\end{equation*}
where $\widehat{b}$ is defined in \eqref{bb2a2}
(cf. \cite[Lemma 12.5]{J-K-S-Y}). It follows from 
\eqref{Qbhat}, \eqref{bb2a2}, \eqref{RCRCR}
together with  
$(\widetilde{\beta_i})_N
\equiv\beta_i
\Mod{NM_2(\widehat{\mathbb Z})}$ ($i=1,\,2$)
that 
\begin{equation}\label{2N2}
\frac{\widetilde{b}-b_0}{2}\equiv
\widehat{a}_2^{-1}\left(\frac{\widehat{b}_1-b_0}{2}\right)
+\left(\frac{\widehat{b}_2-b_0}{2}\right)
\equiv
a'^{-1}\left(\frac{b-b_0}{2}\right)
+\left(\frac{b'-b_0}{2}\right)
\Mod{N\widehat{\mathbb{Z}}}, 
\end{equation}
where $a'^{-1}$ is the inverse of $a'$ in $(\mathbb{Z}/N\mathbb{Z})^\times$. 
We further deduce that
\begin{equation}\label{special2}
\begin{aligned}
\left(\sqrt[N]{\mathfrak{t}}\right)^{\psi_N([R^{\widetilde{\nu}_N}]_N)}&=
\zeta_N^{\frac{\widetilde{b}-b_0}{2}}\sqrt[N]{\mathfrak{t}}\quad\textrm{by 
\eqref{RCRCR} and Proposition \ref{isomorphism2}}\\
&=\zeta_N^{a'^{-1}\left(\frac{b-b_0}{2}\right)
+\left(\frac{b'-b_0}{2}\right)}\sqrt[N]{\mathfrak{t}}\quad\textrm{by \eqref{2N2}}\\
&=\left(\sqrt[N]{\mathfrak{t}}\right)^{\psi_N(\rho_N(\mathcal{C})\rho_N(\mathcal{C}'))}
\quad\textrm{by \eqref{rQNrQN} and Definition \ref{Noperation2}}. 
\end{aligned}
\end{equation}
Therefore, we conclude by \eqref{special},
\eqref{special2}, Propositions \ref{generation} and  \ref{isomorphism2} that
\begin{equation*}
\rho_N(\mathcal{C})\rho_N(\mathcal{C}')=[R^{\widetilde{\nu}_N}]_N.
\end{equation*}
\end{proof}

\begin{theorem}\label{main}
Let $D$ be a negative integer such that $D\equiv1$ or $0\Mod{4}$, 
and let $K$ be an imaginary quadratic field given by $K=\mathbb{Q}(\sqrt{D})$. 
Let $\mathfrak{t}$ be a positive transcendental real number.
Then the binary operation on $\widehat{C}(D)$ defined in Definition \ref{explicit}
is well defined and makes
$\widehat{C}(D)$ a group isomorphic to $\mathrm{Gal}\left(K^\mathrm{ab}(\mathfrak{t}^{1/\infty})/K(\mathfrak{t})\right)$.
\end{theorem}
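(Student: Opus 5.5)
The plan is to transport everything through the bijection $\rho$ of Proposition \ref{bijective}. The inverse limit $\varprojlim_N C_{\Gamma(N)}(D,\,N)$ is a group, since the transition maps $r_{M,\,N}$ are homomorphisms (Figure \ref{diagram1}). Lemma \ref{mainlemma} says that for any choices made in Definition \ref{explicit} (the matrix $\alpha$, the integer $B$, the elements $u_0,v_0$, and the resulting $(R,\,\nu)$) we have, for every $N$,
\begin{equation*}
\rho_N([(R,\,\nu)])=[R^{\widetilde{\nu}_N}]_N=\rho_N(\mathcal{C})\rho_N(\mathcal{C}').
\end{equation*}
Here one must first check that $(R,\,\nu)\in\widehat{\mathcal{Q}}(D)$. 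This holds because $R$ is primitive of discriminant $D$ (Dirichlet composition, \cite{Cox}), $\nu\in\mathrm{SL}_2(\widehat{\mathbb{Z}})$, and the $x^2$-coefficient of $R(\nu[x,y]^t)$ is $\widehat{a}\in\widehat{\mathbb{Z}}^\times$ by \eqref{unit}. Consequently $\rho([(R,\,\nu)])=\rho(\mathcal{C})\rho(\mathcal{C}')$. Since $\rho$ is injective, the class $[(R,\,\nu)]$ is independent of all choices and of the representatives $(Q,\gamma),(Q',\gamma')$, because the right-hand side depends only on $\mathcal{C},\mathcal{C}'$. This gives well-definedness. Moreover $\rho$ becomes multiplicative, so bijectivity transports associativity, identity and inverses, and $\widehat{C}(D)$ is a group with $\rho$ a group isomorphism.

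Next I will identify the inverse limit with the Galois group. The isomorphisms $\psi_N$ of Proposition \ref{isomorphism2} are compatible with the transition maps by the commutative diagram of Figure \ref{diagram1}. They therefore induce an isomorphism
\begin{equation*}
\varprojlim_N C_{\Gamma(N)}(D,\,N)\simeq\varprojlim_N\mathrm{Gal}\left(K_{\mathcal{O},\,N}(\sqrt[N]{\mathfrak{t}})/K(\mathfrak{t})\right),
\end{equation*}
with restriction maps on the right. The fields $L_N=K_{\mathcal{O},\,N}(\sqrt[N]{\mathfrak{t}})$ are finite Galois over $K(\mathfrak{t})$ and increase along divisibility by Lemma \ref{basic} (i). By Lemma \ref{basic} (ii), together with $\sqrt[N]{\mathfrak{t}}\in L_M$ for $N\mid M$, their union is $K^\mathrm{ab}(\mathfrak{t}^{1/\infty})$. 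Hence the right-hand inverse limit is $\mathrm{Gal}(K^\mathrm{ab}(\mathfrak{t}^{1/\infty})/K(\mathfrak{t}))$ by standard infinite Galois theory, since the system is cofinal. Composing gives $\eta=(\varprojlim\psi_N)\circ\rho$.

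The main obstacle is purely logical: Lemma \ref{mainlemma} must be valid for arbitrary choices in Definition \ref{explicit}, including arbitrary lifts $\widetilde{\nu}_N$. That lemma already handles this, since $[R^{\widetilde{\nu}_N}]_N$ is lift-independent by Lemma \ref{rNwell}. The remaining minor point is confirming that the union of the $L_N$ is exactly $K^\mathrm{ab}(\mathfrak{t}^{1/\infty})$, which is immediate from the lemmas cited above.
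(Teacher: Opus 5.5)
Your proposal is correct and follows the paper's proof essentially step for step. You use Lemma \ref{mainlemma} to get $\rho([(R,\,\nu)])=\rho(\mathcal{C})\rho(\mathcal{C}')$ for arbitrary choices, deduce well-definedness and multiplicativity from the bijectivity of $\rho$, and then compose with $\varprojlim_N\psi_N$ and the restriction isomorphism coming from $\bigcup_N K_{\mathcal{O},\,N}(\sqrt[N]{\mathfrak{t}})=K^\mathrm{ab}(\mathfrak{t}^{1/\infty})$ (your $\eta$ should carry that restriction isomorphism's inverse explicitly, as the paper's $\Phi^{-1}$ does, but you already made the identification).
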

\begin{proof}
By Proposition \ref{bijective}, the map
\begin{equation*}
\rho:\widehat{C}(D)\rightarrow
\varprojlim_N C_{\Gamma(N)}(D,\,N),\quad
\mathcal{C}\mapsto(\rho_N(\mathcal{C}))_N
\end{equation*}
is bijective, where the inverse limit is endowed with the
coordinatewise group operation.
\par
Let $\mathcal{C},\,\mathcal{C}'\in\widehat{C}(D)$, and let
$(R,\,\nu)\in\widehat{\mathcal{Q}}(D)$ be obtained from any choice made in
the construction of Definition \ref{explicit}. 
By the definition of the map $\rho_N:\widehat{C}(D)\rightarrow C_{\Gamma(N)}(D,\,N)$
and Lemma \ref{mainlemma}, for every \(N\in\mathbb{N}\) we have
\begin{equation*}
\rho_N([(R,\,\nu)])=[R^{\widetilde{\nu}_N}]_N
=\rho_N(\mathcal{C})\rho_N(\mathcal{C}').
\end{equation*}
Therefore we obtain
\begin{equation}\label{rRnrCrc}
\rho([(R,\,\nu)])
=\rho(\mathcal{C})\rho(\mathcal{C}').
\end{equation}
The right-hand side depends only on $\mathcal{C}$ and $\mathcal{C}'$.
Since $\rho$ is injective, the class $[(R,\,\nu)]$ is independent of all
choices in Definition \ref{explicit}. Hence the binary operation on 
$\widehat{C}(D)$ is well
defined. Moreover, \eqref{rRnrCrc} gives
\begin{equation*}
\rho(\mathcal{C}\cdot\mathcal{C}')
=\rho(\mathcal{C})\rho(\mathcal{C}'),
\end{equation*}
which asserts that the bijection $\rho$ is a homomorphism. Consequently,
$\widehat{C}(D)$ equipped with this binary operation is in fact a group and
$\rho$ is an isomorphism of groups. 
\par
For each $N\in\mathbb{N}$, 
let $\psi_N:C_{\Gamma(N)}(D,\,N)\stackrel{\sim}{\rightarrow}
\mathrm{Gal}\left(K_{\mathcal{O},\,N}(\sqrt[N]{\mathfrak{t}})/K(\mathfrak{t})\right)$
be the isomorphism stated in Proposition \ref{isomorphism2}. 
The commutative diagram in Figure \ref{diagram1} ($\S$\ref{sect:inverse}) shows that these
isomorphisms are compatible with the transition maps. And, we achieve an 
isomorphism
\begin{equation*}
\varprojlim_N\psi_N:
\varprojlim_N C_{\Gamma(N)}(D,\,N)
\stackrel{\sim}{\rightarrow}
\varprojlim_N
\mathrm{Gal}\left(K_{\mathcal{O},\,N}(\sqrt[N]{\mathfrak{t}})/K(\mathfrak{t})\right).
\end{equation*}
By Lemma \ref{basic} (ii) and the definition of $\mathfrak{t}^{1/\infty}$, we see that
\begin{equation*}
\bigcup_{N\in\mathbb{N}}
K_{\mathcal{O},\,N}(\sqrt[N]{\mathfrak{t}})
=K^{\mathrm{ab}}(\mathfrak{t}^{1/\infty}).
\end{equation*}
Thus restriction induces an isomorphism
\begin{equation*}
\Phi:\mathrm{Gal}\left(K^{\mathrm{ab}}(\mathfrak{t}^{1/\infty})/K(\mathfrak{t})\right)
\stackrel{\sim}{\rightarrow}
\varprojlim_N
\mathrm{Gal}\left(K_{\mathcal{O},\,N}(\sqrt[N]{\mathfrak{t}})/K(\mathfrak{t})\right).
\end{equation*}
Therefore the composition map
\begin{equation}\label{constructeta}
\eta=\Phi^{-1}\circ\left(\varprojlim_N\psi_N\right)\circ\rho:\widehat{C}(D)
\rightarrow\mathrm{Gal}\left(K^{\mathrm{ab}}(\mathfrak{t}^{1/\infty})/K(\mathfrak{t})\right)
\end{equation}
is an isomorphism from $\widehat{C}(D)$ onto
$\mathrm{Gal}\left(K^\mathrm{ab}(\mathfrak{t}^{1/\infty})/K(\mathfrak{t})\right)$,
as required.
\end{proof}

\begin{definition}
We call the group $\widehat{C}(D)$ given in Theorem \ref{main}
the \textit{adelic framed form class group of discriminant $D$}. 
\end{definition}

\begin{remark}
The identity element of $\widehat{C}(D)$ is $[(Q_0,\,I_2)]$ since
\begin{equation*}
\rho([(Q_0,\,I_2)])=([Q_0]_N)_N,
\end{equation*} 
which is the identity element of $\varprojlim_N C_{\Gamma(N)}(D,\,N)$. 
\end{remark}

\section {The Shimura reciprocity law}\label{sect:Shimura}

We give a form-theoretic formulation of the Shimura reciprocity law
in terms of adelic framed form classes by identifying a subgroup of $\widehat{C}(D)$ with 
$\mathrm{Gal}(K^\mathrm{ab}/K)$ and 
explicitly describing the corresponding Galois action on special values of modular functions.
\par
For $(Q,\,\gamma)\in\widehat{\mathcal{Q}}(D)$, we write
\begin{equation*}
Q\left(\gamma\begin{bmatrix}x\\y\end{bmatrix}\right)=
a_{(Q,\,\gamma)}x^2+b_{(Q,\,\gamma)}xy+c_{(Q,\,\gamma)}y^2\quad(\in\widehat{\mathbb{Z}}[x,\,y]). 
\end{equation*}
Observe that $a_{(Q,\,\gamma)}$, $b_{(Q,\,\gamma)}$, $c_{(Q,\,\gamma)}$ depend
only on the class $[(Q,\,\gamma)]$ in $\widehat{C}(D)$. 
Furthermore, we define an element $M_{(Q,\,\gamma)}$
of $\mathrm{GL}_2(\widehat{\mathbb{Z}})$ by 
\begin{equation}\label{MQ}
M_{(Q,\,\gamma)}=\begin{bmatrix}
1&-a_{(Q,\,\gamma)}^{-1}\left(\frac{b_{(Q,\,\gamma)}+b_0}{2}\right)\\0&a_{(Q,\,\gamma)}^{-1} 
\end{bmatrix}E\gamma^{-1}E,
\end{equation}
where $E=\begin{bmatrix}1&0\\0&-1\end{bmatrix}$. 
\par
Put
\begin{equation*}
\mathcal{F}=\bigcup_{N\in\mathbb{N}}\mathcal{F}_N
\quad\textrm{and}\quad
\mathcal{F}_{\tau_\mathcal{O}}=\bigcup_{N\in\mathbb{N}}\mathcal{F}_{N,\,
\tau_\mathcal{O}}. 
\end{equation*}
Since $\mathcal{F}_N\subseteq\mathcal{F}_M$ for all $M,\,N
\in\mathbb{N}$ with $N\,|\,M$, 
the field $\mathcal{F}$ is Galois over $\mathcal{F}_1$. 
By \eqref{GFF} and Proposition \ref{GFaction}, 
there is an isomorphism
\begin{align*}
\mathrm{GL}_2(\widehat{\mathbb{Z}})/\langle-I_2\rangle
&\stackrel{\sim}{\rightarrow}\mathrm{Gal}(\mathcal{F}/\mathcal{F}_1)\\
[\gamma]&\mapsto\bigg(
f\mapsto f^{\gamma_N}~|~f\in\mathcal{F}_N~(N\in\mathbb{N})\bigg)
\quad(\gamma\in\mathrm{GL}_2(\widehat{\mathbb{Z}})),
\end{align*}
where $\gamma_N$ is the image of $\gamma$ in 
$\mathrm{GL}_2(\mathbb{Z}/N\mathbb{Z})$
(cf. \cite[Proposition 6.21 and Theorem 6.23]{Shimura}).
Then the Shimura reciprocity law \cite[Theorem 6.31]{Shimura} takes
the following concrete form. 

\begin{theorem}\label{Shimurarec}
The subset
\begin{equation*}
H=\left\{[(Q,\,\gamma)]~|~(Q,\,\gamma)\in\widehat{\mathcal{Q}}(D),~
b_{(Q,\,\gamma)}=b_0\right\}
\end{equation*}
of $\widehat{C}(D)$ is a subgroup of $\widehat{C}(D)$ and is isomorphic 
to $\mathrm{Gal}(K^{\mathrm{ab}}/K)$ via  
\begin{align*}
H&\rightarrow\mathrm{Gal}(K^\mathrm{ab}/K)\\
[(Q,\,\gamma)]&\mapsto\bigg(f(\tau_\mathcal{O})\mapsto
f^{[M_{(Q,\,\gamma)}]}(-\overline{\omega}_Q)~|~f\in\mathcal{F}_{\tau_\mathcal{O}}\bigg).
\end{align*}
\end{theorem}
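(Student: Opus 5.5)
Under the isomorphism $\eta$ of Theorem \ref{main}, the plan is to identify $H$ with the subgroup $\mathrm{Gal}(K^\mathrm{ab}(\mathfrak{t}^{1/\infty})/K^\mathrm{ab}(\mathfrak{t}))$. We then use restriction to $K^\mathrm{ab}$, and finally rewrite the finite-level action of $\psi_N$ as the adelic formula with $M_{(Q,\,\gamma)}$.

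\emph{Step 1: characterizing $H$.} For $\mathcal{C}=[(Q,\,\gamma)]$ and each $N$, the class $\rho_N(\mathcal{C})=[Q^{\widetilde{\gamma}_N}]_N$ is represented by a form whose $xy$-coefficient $b_N$ satisfies $b_N\equiv b_{(Q,\,\gamma)}\Mod{2N\widehat{\mathbb{Z}}}$. Indeed, $\widetilde{\gamma}_N\equiv\gamma \Mod{N}$, and the argument of \cite[Lemma 12.5]{J-K-S-Y} already used in the proof of Lemma \ref{mainlemma} applies; in terms of $\frac{b-b_0}{2}$ this reads $\frac{b_N-b_0}{2}\equiv\frac{b_{(Q,\,\gamma)}-b_0}{2}\Mod{N}$. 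By Proposition \ref{isomorphism2}, $\eta(\mathcal{C})$ therefore sends $\sqrt[N]{\mathfrak{t}}\mapsto\zeta_N^{(b_{(Q,\gamma)}-b_0)/2}\sqrt[N]{\mathfrak{t}}$ for all $N$. It follows that $\eta(\mathcal{C})$ fixes every $\sqrt[N]{\mathfrak{t}}$ if and only if $\frac{b_{(Q,\gamma)}-b_0}{2}\in\bigcap_N N\widehat{\mathbb{Z}}=\{0\}$, that is, if and only if $b_{(Q,\gamma)}=b_0$. Hence $H=\eta^{-1}\bigl(\mathrm{Gal}(K^\mathrm{ab}(\mathfrak{t}^{1/\infty})/K(\mathfrak{t}^{1/\infty}))\bigr)$, and in particular $H$ is a subgroup. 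This subgroup maps isomorphically onto $\mathrm{Gal}(K^\mathrm{ab}(\mathfrak{t})/K(\mathfrak{t}))\simeq\mathrm{Gal}(K^\mathrm{ab}/K)$ by restriction. This uses the semidirect decomposition (\cite[Lemma 12.4]{J-K-S-Y}) passed to the limit, together with the linear disjointness of $K^\mathrm{ab}$ and $K(\mathfrak{t})$, which holds because $\mathfrak{t}$ is transcendental.

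\emph{Step 2: the explicit action.} The map in the statement is thus $\mathcal{C}\mapsto\eta(\mathcal{C})|_{K^\mathrm{ab}}$, and I must check that this restriction agrees with the displayed formula. Fix $f\in\mathcal{F}_{N,\,\tau_\mathcal{O}}$. By Remark \ref{operationremark}(i) and Proposition \ref{isomorphism}, $f(\tau_\mathcal{O})^{\eta(\mathcal{C})}=f^{m_F}(-\overline{\omega}_F)$ with $F=Q^{\widetilde{\gamma}_N}$. Write $\alpha=\widetilde{\gamma}_N$. Then $\omega_F=\alpha^{-1}(\omega_Q)$, and hence $-\overline{\omega}_F=(E\alpha^{-1}E)(-\overline{\omega}_Q)$, since conjugating by $E$ corresponds to $\tau\mapsto-\overline{\tau}$. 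Because $E\alpha^{-1}E\in\mathrm{SL}_2(\mathbb{Z})$, Proposition \ref{GFaction}(ii) gives $f^{m_F}\circ(E\alpha^{-1}E)=f^{m_F\cdot(E\alpha^{-1}E)}$, where the product is taken in the right-action convention. It remains to compare with $M_{(Q,\,\gamma)}$ reduced mod $N$. The entries of $m_F$ are $a^{-1}$ and $-a^{-1}\frac{b+b_0}{2}$ computed from $F$, and these agree mod $N$ with $a_{(Q,\gamma)}^{-1}$ and $-a_{(Q,\gamma)}^{-1}\frac{b_{(Q,\gamma)}+b_0}{2}$. Likewise $E\alpha^{-1}E\equiv E\gamma^{-1}E\Mod{N}$. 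Hence $[M_{(Q,\,\gamma)}]_N=m_F\,E\alpha^{-1}E$, and the formula follows. Well-definedness, meaning independence of the representative $(Q,\,\gamma)$ and of $N$, is then automatic because the map is a restriction of $\eta$. Consistency under $\mathcal{F}_N\subseteq\mathcal{F}_M$ follows from the description of $\mathrm{Gal}(\mathcal{F}/\mathcal{F}_1)$ given just before the theorem.

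\emph{Main obstacle.} The step I expect to be delicate is the order-of-composition bookkeeping in Step 2. Since the action $f\mapsto f^\gamma$ is a right action, one needs $f^{\gamma_1}\circ\alpha=f^{\gamma_1\alpha}$ and not $f^{\alpha\gamma_1}$, and this must also be reconciled with the $E$-twist. I would verify this first on generators: diagonal matrices, via Proposition \ref{GFaction}(i), and $\mathrm{SL}_2(\mathbb{Z})$ elements, via (ii). Surjectivity onto $\mathrm{Gal}(K^\mathrm{ab}/K)$ comes from Step 1, and injectivity of restriction on $H$ comes from the fact that elements of $H$ fix $\mathfrak{t}^{1/\infty}$.
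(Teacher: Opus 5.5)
Your proposal is correct and follows essentially the same route as the paper. You identify $H$ with the preimage under $\eta$ of the subgroup fixing $\mathfrak{t}^{1/\infty}$, using the Kummer action in Proposition \ref{isomorphism2}; you restrict to $K^{\mathrm{ab}}$; and you rewrite $f^{m_F}(-\overline{\omega}_F)$ as $f^{[M_{(Q,\,\gamma)}]}(-\overline{\omega}_Q)$ via the $E$-twist and Proposition \ref{GFaction}(ii). The differences are cosmetic: you get the congruence of the $xy$-coefficient modulo $2N$ from lifts modulo $N$ (valid because the lifts have determinant $1$), where the paper takes lifts $\gamma^*_N\equiv\gamma$ modulo $2N$; and you obtain the restriction isomorphism onto $\mathrm{Gal}(K^{\mathrm{ab}}/K)$ from the level-$N$ semidirect decompositions, where the paper uses $K^{\mathrm{ab}}\cap K(\mathfrak{t}^{1/\infty})=K$ directly.
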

\begin{proof}
Recall from the definition \eqref{constructeta} that the isomorphism
\begin{equation*}
\eta:\widehat{C}(D)\stackrel{\sim}{\rightarrow}
\mathrm{Gal}\left(K^\mathrm{ab}(\mathfrak{t}^{1/\infty})/K(\mathfrak{t})\right)
\end{equation*}
is the unique isomorphism for which the following diagram commutes:
\begin{figure}[H]
\begin{equation*}
\xymatrixcolsep{5pc}
\xymatrix{
\widehat{C}(D)\ar@{->}[rr]^{\sim}_{\left(\rho_N\right)_N}
\ar@{->}[dd]_{}^{\eta} && \varprojlim_{N\in\mathbb{N}} C_{\Gamma(N)}(D,\,N) \ar@{->}[dd]^{\rotatebox{90}{$\sim$}}_{\varprojlim_N \psi_N} \\\\
\mathrm{Gal}\left(K^\mathrm{ab}(\mathfrak{t}^{1/\infty})/K(\mathfrak{t})\right)
 \ar@{->}[rr]^{\sim}_{\left(\cdot|_{K_{\mathcal{O},\,N}(\sqrt[N]{\mathfrak{t}})}\right)_N} && \varprojlim_{N\in\mathbb{N}}\mathrm{Gal}\left(K_{\mathcal{O},\,N}(\sqrt[N]{\mathfrak{t}})/K(\mathfrak{t})\right)
}
\end{equation*}
\caption{A commutative diagram identifying $\widehat{C}(D)$
with a Galois group}
\label{diagram2}
\end{figure}
Let $(Q,\,\gamma)\in\widehat{\mathcal{Q}}(D)$. For each $N\in\mathbb{N}$, take a matrix $\gamma^*_N\in\mathrm{SL}_2(\mathbb{Z})$
satisfying 
\begin{equation}\label{2N}
\gamma^*_N\equiv\gamma\Mod{2NM_2(\widehat{\mathbb{Z}})}.
\end{equation} 
Since $\gamma^*_N\equiv\gamma\Mod{NM_2(\widehat{\mathbb{Z}})}$, we get
\begin{equation}\label{Qr*}
\rho_N([(Q,\,\gamma)])=
[Q^{\gamma^*_N}]_N. 
\end{equation}
We then deduce that
\begin{eqnarray*}
&&[(Q,\,\gamma)]\in\eta^{-1}\left(\mathrm{Gal}\left(K^\mathrm{ab}(\mathfrak{t}^{1/\infty})/K(\mathfrak{t}^{1/\infty})\right)\right)\\
&\Longleftrightarrow&\eta([(Q,\,\gamma)])\in\mathrm{Gal}\left(K^\mathrm{ab}(\mathfrak{t}^{1/\infty})/K(\mathfrak{t}^{1/\infty})\right)\\
&\Longleftrightarrow&\left(\sqrt[N]{\mathfrak{t}}\right)^{\psi_N(\rho_N([(Q,\,\gamma)]))}=\sqrt[N]{\mathfrak{t}}
~~\textrm{for all}~N\in\mathbb{N}~
\textrm{by the commutative diagram in Figure \ref{diagram2}}\\
&\Longleftrightarrow&\zeta_N^{\frac{b_{Q^{\gamma^*_N}}-b_0}{2}}\sqrt[N]{\mathfrak{t}}=\sqrt[N]{\mathfrak{t}}~~
\textrm{for all $N\in\mathbb{N}$ by Proposition \ref{isomorphism2} and \eqref{Qr*}},\\
&&\hspace{3.4cm}\textrm{where $b_{Q^{\gamma^*_N}}$
is the coefficient of $xy$ in $Q^{\gamma^*_N}$}\\
&\Longleftrightarrow&
b_{Q^{\gamma^*_N}}\equiv b_0\Mod{2N}~\textrm{for all}~N\in\mathbb{N}\\
&\Longleftrightarrow&b_{(Q,\,\gamma)}\equiv b_0\Mod{2N\widehat{\mathbb{Z}}}~\textrm{by \eqref{2N}}\\
&\Longleftrightarrow&b_{(Q,\,\gamma)}=b_0\quad
\textrm{because}~\bigcap_{N\in\mathbb{N}}2N\widehat{\mathbb{Z}}=\{0\}\\
&\Longleftrightarrow&[(Q,\,\gamma)]\in H. 
\end{eqnarray*}
This yields that
$H$ is a subgroup of $\widehat{C}(D)$ isomorphic to 
$\mathrm{Gal}\left(K^\mathrm{ab}(\mathfrak{t}^{1/\infty})/K(\mathfrak{t}^{1/\infty})\right)$. 
\par
Now, let $[(Q,\,\gamma)]\in H$. 
By Lemma \ref{basic} (ii), Proposition \ref{generation} and the diagram in
Figure \ref{diagram2}, $\eta([(Q,\,\gamma)])$ is completely determined by
its action on the values $f(\tau_{\mathcal{O}})$ for
$f\in\mathcal F_{\tau_{\mathcal O}}$.
Let $f\in\mathcal{F}_{N,\,\tau_\mathcal{O}}$ for some $N\in\mathbb{N}$. 
We find that
\begin{align*}
f(\tau_\mathcal{O})^{\eta([(Q,\,\gamma)])}&=
f(\tau_\mathcal{O})^{\psi_N(\rho_N([(Q,\,\gamma)]))}
\quad\textrm{by the commutative diagram in Figure \ref{diagram2}}\\
&=f(\tau_\mathcal{O})^{\psi_N([Q^{\widetilde{\gamma}_N}]_N)}\\
&=f^{m_{Q^{\widetilde{\gamma}_N}}}(-\overline{\omega}_{Q^{\widetilde{\gamma}_N}})
\quad\textrm{by Proposition \ref{isomorphism2}}\\
&=f^{m_{Q^{\widetilde{\gamma}_N}}}(-\overline{\widetilde{\gamma}_N^{-1}(\omega_Q)})\\
&=f^{m_{Q^{\widetilde{\gamma}_N}}}((E\widetilde{\gamma}_N^{-1}E)(-\overline{\omega}_Q))\\
&=f^{m_{Q^{\widetilde{\gamma}_N}}(E\widetilde{\gamma}_N^{-1}E)}(-\overline{\omega}_Q)
\quad\textrm{by Proposition \ref{GFaction} (ii)}\\
&=f^{[M_{(Q,\,\gamma)}]}(-\overline{\omega}_Q)
\quad\textrm{by the definition \eqref{MQ}}. 
\end{align*}
Since $\mathfrak{t}$ is a transcendental real number, we have
$K^\mathrm{ab}\cap K(\mathfrak{t}^{1/\infty})=K$. Hence it follows that
\begin{equation}\label{GKKGKK}
\mathrm{Gal}\left(K^\mathrm{ab}(\mathfrak{t}^{1/\infty})/K(\mathfrak{t}^{1/\infty})\right)
\simeq\mathrm{Gal}(K^\mathrm{ab}/K)\quad\textrm{via restriction}
\end{equation}
(see \cite[Theorem 1.12 in Chapter VI]{Lang}). 
This completes the proof. 
\end{proof}

\section {Topology on $\widehat{C}(D)$}

With the Krull topology, $\mathrm{Gal}\left(K^\mathrm{ab}(\mathfrak{t}^{1/\infty})/K(\mathfrak{t})\right)$
is canonically a topological group. 
In this section, we endow $\widehat{C}(D)$ with a topology
so that the group isomorphism
\begin{equation*}
\widehat{C}(D)
\stackrel{\sim}{\rightarrow}\mathrm{Gal}\left(K^\mathrm{ab}(\mathfrak{t}^{1/\infty})/K(\mathfrak{t})\right)
\end{equation*}
established in Theorem \ref{main}
becomes an isomorphism of topological groups. 

\begin{definition}\label{topology}
We define a topology on $\widehat{C}(D)$ as follows.
First, equip $\mathcal{Q}(D)$ and $\mathrm{SL}_2(\widehat{\mathbb{Z}})$
with the discrete topology and the profinite topology
described in Remark \ref{alternative}, respectively.
Then we regard $\widehat{\mathcal{Q}}(D)$
as a subspace of the product space $\mathcal{Q}(D)
\times \mathrm{SL}_2(\widehat{\mathbb{Z}})$.
Finally, give $\widehat{C}(D)=\widehat{\mathcal{Q}}(D)/\sim$
the quotient topology
induced by the canonical surjection $\widehat{\mathcal{Q}}(D)
\rightarrow\widehat{\mathcal{Q}}(D)/\sim$.
\end{definition}

We consider
$\varprojlim_N\mathrm{Gal}\left(K_{\mathcal{O},\,N}(\sqrt[N]{\mathfrak{t}})/K(\mathfrak{t})\right)$
as a profinite group
by giving $\mathrm{Gal}\left(K_{\mathcal{O},\,N}(\sqrt[N]{\mathfrak{t}})/
K(\mathfrak{t})\right)$ the discrete topology for every $N\in\mathbb{N}$. 
Since the family 
$\left\{K_{\mathcal{O},\,N}(\sqrt[N]{\mathfrak{t}})\right\}_N$
is cofinal in the family of finite Galois subextensions of 
$K^\mathrm{ab}(\mathfrak{t}^{1/\infty})/K(\mathfrak{t})$,
the restriction maps induce an isomorphism
\begin{equation*}
\mathrm{Gal}\left(K^\mathrm{ab}(\mathfrak{t}^{1/\infty})/K(\mathfrak{t})\right)
\stackrel{\sim}{\rightarrow}
\varprojlim_N\mathrm{Gal}\left(K_{\mathcal{O},\,N}(\sqrt[N]{\mathfrak{t}})/K(\mathfrak{t})\right)
\end{equation*}
of topological groups;
see \cite[Chapter 7]{Milne2} and \cite[Lemma 1.1.9 and Theorem 2.11.1]{R-Z}. 

\begin{theorem}\label{topological}
Let $D$ be a negative integer such that $D\equiv1$ or $0\Mod{4}$, 
and let $K$ be an imaginary quadratic field given by $K=\mathbb{Q}(\sqrt{D})$. 
Let $\mathfrak{t}$ be a positive transcendental real number.
Then $\widehat{C}(D)$ endowed with
the topology defined in Definition \ref{topology} 
is isomorphic to 
$\mathrm{Gal}\left(K^\mathrm{ab}(\mathfrak{t}^{1/\infty})
/K(\mathfrak{t})\right)$ as a topological group. 
\end{theorem}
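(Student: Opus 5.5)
We will show that the group isomorphism $\eta=\Phi^{-1}\circ(\varprojlim_N\psi_N)\circ\rho$ of Theorem \ref{main} is a homeomorphism. Since $\Phi$ is already an isomorphism of topological groups and each $\psi_N$ is an isomorphism of finite discrete groups (so $\varprojlim_N\psi_N$ is a homeomorphism of profinite groups), it suffices to prove that
\begin{equation*}
\rho:\widehat{C}(D)\rightarrow\varprojlim_N C_{\Gamma(N)}(D,\,N)
\end{equation*}
is a homeomorphism, where the target carries the profinite topology. We already know from Proposition \ref{bijective} that $\rho$ is bijective. The strategy is: prove that $\rho$ is continuous, prove that $\widehat{C}(D)$ is compact, and then use that the target is Hausdorff, so a continuous bijection from a compact space is a homeomorphism. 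The group-topology compatibility then follows automatically, because $\rho$ is a group isomorphism onto a topological group.

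For continuity, it suffices by the universal property of the quotient topology to check that the composite $\widehat{\mathcal{Q}}(D)\rightarrow\varprojlim_N C_{\Gamma(N)}(D,\,N)$, $(Q,\,\gamma)\mapsto([Q^{\widetilde{\gamma}_N}]_N)_N$, is continuous. This in turn reduces to continuity of each coordinate $(Q,\,\gamma)\mapsto[Q^{\widetilde{\gamma}_N}]_N$ into a discrete finite set. This coordinate is locally constant. Indeed, if $(Q',\,\gamma')$ lies in the basic open set $\{Q\}\times\gamma\mathfrak{K}(N)$, then $Q'=Q$ and $\gamma'\equiv\gamma\Mod{NM_2(\widehat{\mathbb{Z}})}$. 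We may therefore take $\widetilde{\gamma'}_N=\widetilde{\gamma}_N$, and the image is the same by Lemma \ref{rNwell}.

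For compactness, we will exhibit $\widehat{C}(D)$ as a continuous image of a compact set. Let $Q^{(1)},\dots,Q^{(h)}$ be representatives of the finitely many $\mathrm{SL}_2(\mathbb{Z})$-classes in $\mathcal{Q}(D)$. Every class $[(Q,\,\gamma)]$ has a representative with $Q=Q^{(j)}$ for some $j$, by applying a suitable $\alpha$ as in Lemma \ref{action}. Hence the canonical surjection restricted to
\begin{equation*}
\bigsqcup_{j=1}^h \{Q^{(j)}\}\times\left\{\gamma\in\mathrm{SL}_2(\widehat{\mathbb{Z}})~|~Q^{(j)}\left(\gamma\begin{bmatrix}1\\0\end{bmatrix}\right)\in\widehat{\mathbb{Z}}^\times\right\}
\end{equation*}
is still onto. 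Each piece is compact, because it is a closed subset of the compact group $\mathrm{SL}_2(\widehat{\mathbb{Z}})$. Closedness holds since $\gamma\mapsto Q^{(j)}(\gamma[1,0]^{T})$ is continuous into $\widehat{\mathbb{Z}}$ and $\widehat{\mathbb{Z}}^\times$ is closed, being the preimage of the closed (and open) set $\prod_p\mathbb{Z}_p^\times$, or equivalently $\bigcap_N$ of the preimages of $(\mathbb{Z}/N\mathbb{Z})^\times$. Thus $\widehat{C}(D)$ is compact. Note that we do not need $\widehat{C}(D)$ to be Hausdorff a priori; Hausdorffness of the profinite target suffices.

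The main obstacle is making the compactness step airtight. One must verify that the subspace topology on each piece agrees with what the quotient map sees, and that the finite disjoint union is a compact subspace of $\widehat{\mathcal{Q}}(D)$. The latter holds because $\mathcal{Q}(D)$ is discrete, so each $\{Q^{(j)}\}\times(\cdot)$ is open and closed in the product. Once these points are settled, the continuous bijection $\rho$ from a compact space onto a Hausdorff space is closed, hence a homeomorphism. Then $\eta$ is a homeomorphic group isomorphism, and in particular the multiplication and inversion on $\widehat{C}(D)$ are continuous, being transported from the profinite side.
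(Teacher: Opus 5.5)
Your proposal is correct and follows essentially the same route as the paper. Compactness comes from the finitely many pieces $q(\{Q_i\}\times X_i)$. Continuity comes from the levelwise maps $\rho_N$: your local-constancy check on $\{Q\}\times\gamma\mathfrak{K}(N)$ is the paper's factorization through the discrete set $T_N$. The compact-to-Hausdorff argument then finishes, and you apply it to $\rho$ where the paper applies it to $\eta$.

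One small slip: $\widehat{\mathbb{Z}}^\times=\prod_p\mathbb{Z}_p^\times$ is closed but not open in $\widehat{\mathbb{Z}}$, since every coset $x+N\widehat{\mathbb{Z}}$ contains non-units. Only closedness is used, so the argument is unaffected.
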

\begin{proof}
First, we show that $\widehat{C}(D)$ is compact.
Choose representatives $Q_1,\,Q_2,\,\ldots,\,Q_h\in\mathcal{Q}(D)$ of the
elements of the finite group $C_{\Gamma(1)}(D,\,1)$, and put
\begin{equation*}
X_i=\left\{\beta\in\mathrm{SL}_2(\widehat{\mathbb{Z}})~|~
(Q_i,\,\beta)\in\widehat{\mathcal{Q}}(D)\right\}
=\left\{\beta\in\mathrm{SL}_2(\widehat{\mathbb{Z}})~|~
Q_i\left(\beta\begin{bmatrix}1\\0\end{bmatrix}\right)\in\widehat{\mathbb{Z}}^\times
\right\}\quad(i=1,\,2,\,\ldots,\,h). 
\end{equation*}
For each $(Q,\,\gamma)\in\widehat{\mathcal{Q}}(D)$, since
\begin{equation*}
Q=Q_i^\alpha\quad\textrm{for some}~i\in\{1,\,2,\,\ldots,\,h\}~
\textrm{and}~\alpha\in\mathrm{SL}_2(\mathbb{Z}), 
\end{equation*}
we have
\begin{equation*}
[(Q,\,\gamma)]=[(Q,\,\gamma)^{\alpha^{-1}}]=[(Q_i,\,\alpha\gamma)],
\end{equation*}
which implies $\alpha\gamma\in X_i$. 
Thus, if we let 
\begin{equation*}
q:\widehat{\mathcal{Q}}(D)\rightarrow\widehat{C}(D)
\end{equation*}
be the canonical quotient map, which is continuous, then we obtain
\begin{equation}\label{union}
\widehat{C}(D)=\bigcup_{i=1}^h
q(\{Q_i\}\times X_i).
\end{equation}
Now, since the polynomial map 
\begin{equation*}
g_i:\mathrm{SL}_2(\widehat{\mathbb{Z}})
\rightarrow\widehat{\mathbb{Z}},\quad
\gamma\mapsto Q_i\left(\gamma\begin{bmatrix}1\\0\end{bmatrix}\right)
\end{equation*}
is continuous and 
$\widehat{\mathbb{Z}}^\times$ is closed in 
$\widehat{\mathbb{Z}}$ (because
$\widehat{\mathbb{Z}}^\times$ is compact in the Hausdorff space 
$\widehat{\mathbb{Z}}$), the inverse image $X_i=g_i^{-1}(\widehat{\mathbb{Z}}^\times)$
is closed in $\mathrm{SL}_2(\widehat{\mathbb{Z}})$ for every $i=1,\,2,\,\ldots,\,h$.
So, $X_i$ is compact because it is a closed subset of
the compact space $\mathrm{SL}_2(\widehat{\mathbb{Z}})$.
And, it follows that the image of the compact subspace $\{Q_i\}\times X_i
\simeq X_i$
of $\widehat{\mathcal{Q}}(D)$
under the continuous map $q$ is compact. 
Hence $\widehat{C}(D)$ is compact by \eqref{union}.
\par
Next, we assert that for each $N\in\mathbb{N}$
\begin{equation*}
\rho_N:\widehat{C}(D)\rightarrow C_{\Gamma(N)}(D,\,N)
\end{equation*}
is continuous, where $C_{\Gamma(N)}(D,\,N)$ is given the discrete
topology. 
Indeed, put
\begin{equation*}
T_N=\left\{(Q,\,\alpha)\in\mathcal{Q}(D)\times
\mathrm{SL}_2(\mathbb{Z}/N\mathbb{Z})~|~
Q^{\widetilde{\alpha}}\in\mathcal{Q}(D,\,N)\right\},
\end{equation*}
where $\widetilde{\alpha}\in\mathrm{SL}_2(\mathbb{Z})$ is any lift of $\alpha$. 
Since $T_N$ is a subspace of the discrete space $\mathcal{Q}(D)\times
\mathrm{SL}_2(\mathbb{Z}/N\mathbb{Z})$, it is discrete too. 
Note that the composition map
\begin{equation*}
\rho_N\circ q:\widehat{\mathcal{Q}}(D)
\rightarrow C_{\Gamma(N)}(D,\,N),
\quad (Q,\,\gamma)\mapsto[Q^{\widetilde{\gamma}_N}]_N
\end{equation*}
factors through the well-defined continuous reduction map
\begin{equation*}
f_N:\widehat{\mathcal{Q}}(D)\rightarrow
T_N,\quad
(Q,\,\gamma)\mapsto (Q,\,\gamma_N),
\end{equation*}
where $\gamma_N$ is the image of $\gamma$ in $\mathrm{SL}_2(\mathbb{Z}/N\mathbb{Z})$. 
That is, if we define
\begin{equation*}
h_N:T_N\rightarrow C_{\Gamma(N)}(D,\,N),
\quad
(Q,\,\alpha)\mapsto[Q^{\widetilde{\alpha}}]_N,
\end{equation*}
then the following diagram commutes. 
\begin{figure}[H]
\begin{equation*}
\xymatrixcolsep{5pc}
\xymatrix{
\widehat{\mathcal{Q}}(D)\ar@{->}[rr]^{\rho_N\circ q}
\ar@{->}[ddr]_{f_N} && C_{\Gamma(N)}(D,\,N)\\\\
&T_N \ar@{->}[uur]_{h_N} &
}
\end{equation*}
\caption{A commutative diagram for continuity of $\rho_N$}
\label{diagram3}
\end{figure}
Since both $T_N$ and $C_{\Gamma(N)}(D,\,N)$ are discrete,
the map $h_N$ given in the commutative diagram in Figure \ref{diagram3}
is continuous. So it follows that
$\rho_N\circ q=h_N\circ f_N$ is also continuous. Hence $\rho_N$ is continuous
because $q$ is a quotient map.
\par
Hence, we are ready to show that the group isomorphism
\begin{equation*}
\eta:\widehat{C}(D)\stackrel{\sim}{\rightarrow}
\mathrm{Gal}\left(K^\mathrm{ab}(\mathfrak{t}^{1/\infty})/K(\mathfrak{t})\right)
\end{equation*}
defined in \eqref{constructeta} is continuous.  
Let
\begin{equation*}
\Psi:\widehat{C}(D)\stackrel{\sim}{\rightarrow}\varprojlim_N
\mathrm{Gal}\left(K_{\mathcal{O},\,N}(\sqrt[N]{\mathfrak{t}})/K(\mathfrak{t})\right),
\quad
\mathcal{C}\mapsto
\left(\psi_N(\rho_N(\mathcal{C}))\right)_N
\end{equation*}
be the isomorphism obtained from the diagram in Figure \ref{diagram2} ($\S$\ref{sect:Shimura}). 
For each $N\in\mathbb{N}$, since $\rho_N$ is continuous and $\psi_N$ is an isomorphism between
finite discrete spaces,
the composition map
\begin{equation*}
\psi_N\circ\rho_N:\widehat{C}(D)
\rightarrow\mathrm{Gal}\left(K_{\mathcal{O},\,N}(\sqrt[N]{\mathfrak{t}})/K(\mathfrak{t})\right)
\end{equation*}
is also  continuous.
Thus the map $\Psi$ is
continuous because 
the inverse limit has the subspace topology inherited from $\prod_N\mathrm{Gal}\left(K_{\mathcal{O},\,N}(\sqrt[N]{\mathfrak{t}})/K(\mathfrak{t})\right)$.
Let
\begin{equation*}
\Phi:\mathrm{Gal}\left(K^\mathrm{ab}(\mathfrak{t}^{1/\infty})/K(\mathfrak{t})\right)
\stackrel{\sim}{\rightarrow}
\varprojlim_N\mathrm{Gal}\left(K_{\mathcal{O},\,N}(\sqrt[N]{\mathfrak{t}})/K(\mathfrak{t})\right)
\end{equation*}
be the homeomorphism induced by restriction.
Since $\Phi\circ\eta=\Psi$ by the commutativity of the diagram in Figure \ref{diagram2},
$\eta$ is continuous.
\par
Observe that $\mathrm{Gal}\left(K^\mathrm{ab}(\mathfrak{t}^{1/\infty})/K(\mathfrak{t})\right)$
is Hausdorff, since it is a profinite group. Eventually the map
$\eta$ is a continuous bijection from the compact space
$\widehat{C}(D)$ onto the Hausdorff space 
$\mathrm{Gal}\left(K^\mathrm{ab}(\mathfrak{t}^{1/\infty})/K(\mathfrak{t})\right)$. 
Therefore $\eta$ is a closed map, and so
it is a homeomorphism.
Finally, we conclude that 
$\widehat{C}(D)$ is isomorphic to $\mathrm{Gal}\left(K^\mathrm{ab}(\mathfrak{t}^{1/\infty})/K(\mathfrak{t})\right)$ as a topological group.
\end{proof}

\section{Arithmetic rigidity of $\widehat C(D)$}

The Kummer direction $\mathfrak{t}^{1/\infty}$ records the cyclotomic
action of $\operatorname{Gal}(K^\mathrm{ab}/K)$. In this final section, we
prove that the group $\widehat{C}(D)$ uniquely determines the imaginary
quadratic field $K$.
\par
For simplicity, let
\begin{align*}
\mathcal{G}_K&=\mathrm{Gal}\left(
K^\mathrm{ab}(\mathfrak{t}^{1/\infty})/K(\mathfrak{t})\right),\\
\mathcal{A}_K&=\mathrm{Gal}\left(
K^\mathrm{ab}(\mathfrak{t}^{1/\infty})/K(\mathfrak{t}^{1/\infty})\right)
\quad(\leq \mathcal{G}_K),\\
\mathcal{K}_K&=\mathrm{Gal}\left(
K^\mathrm{ab}(\mathfrak{t}^{1/\infty})/K^\mathrm{ab}(\mathfrak{t})\right)
\quad(\leq \mathcal{G}_K).
\end{align*}
Recall from \eqref{GKKGKK} that
\begin{equation}\label{AKG}
\mathcal{A}_K\simeq
\mathrm{Gal}(K^\mathrm{ab}/K)\quad\textrm{via restriction}.
\end{equation}
Now that $K/\mathbb{Q}$ is abelian, we get
$K\subseteq\mathbb{Q}^\mathrm{ab}\subseteq K^\mathrm{ab}$.
Let
\begin{equation*}
\iota:\mathrm{Gal}(\mathbb{Q}^\mathrm{ab}/\mathbb{Q})
\stackrel{\sim}{\rightarrow}\widehat{\mathbb{Z}}^\times
\end{equation*}
be the canonical cyclotomic isomorphism, and define a homomorphism
\begin{equation*}
\chi_K:\mathrm{Gal}(K^\mathrm{ab}/K)
\rightarrow\widehat{\mathbb{Z}}^\times
\end{equation*}
as the composition of the restriction $\mathrm{Gal}(K^\mathrm{ab}/K)\rightarrow
\mathrm{Gal}(\mathbb{Q}^\mathrm{ab}/K)$
with $\iota$. Put
\begin{equation}\label{IKdef}
I_K=\mathrm{im}(\chi_K).
\end{equation}
Since
\begin{equation}\label{IK}
I_K=\iota\left(\mathrm{Gal}(\mathbb{Q}^\mathrm{ab}/K)\right)
=\iota\left(
\ker\left(
\mathrm{Gal}(\mathbb{Q}^\mathrm{ab}/\mathbb{Q})
\stackrel{\mathrm{restriction}}{\longrightarrow}
\mathrm{Gal}(K/\mathbb{Q})
\right)\right),
\end{equation}
$I_K$ is a subgroup of index $2$ in $\widehat{\mathbb{Z}}^\times$. Here the map $\chi_K$ is the profinite
cyclotomic character of $K$; see \cite[$\S$V.5]{Milne}.
\par
For each $u\in\widehat{\mathbb{Z}}$, let $\eta_u$ denote the unique
 element of $\mathcal{K}_K$ satisfying
\begin{equation*}
\left(\sqrt[N]{\mathfrak{t}}\right)^{\eta_u}
=\zeta_N^{u_N}\sqrt[N]{\mathfrak{t}}
\quad(N\in\mathbb{N}),
\end{equation*}
where $u_N$ is the image of $u$ in $\mathbb{Z}/N\mathbb{Z}$. Since
\begin{equation*}
\mathrm{Gal}\left(
K^\mathrm{ab}(\sqrt[N]{\mathfrak{t}})/K^\mathrm{ab}(\mathfrak{t})
\right)\simeq\mathbb{Z}/N\mathbb{Z}
\quad(N\in\mathbb{N})
\end{equation*}
and the restriction maps correspond to the natural reduction maps, we
obtain an isomorphism 
\begin{equation}\label{ZKue}
\widehat{\mathbb{Z}}\stackrel{\sim}{\rightarrow}\mathcal{K}_K,
\quad u\mapsto\eta_u.
\end{equation}
Using this isomorphism, we define a homomorphism
\begin{equation*}
\alpha_K:\mathcal{A}_K\rightarrow\mathrm{Aut}(\mathcal{K}_K)
\end{equation*}
by
\begin{equation*}
\alpha_K(\sigma)(\eta_u)=
\eta_{\chi_K(\sigma^{-1}|_{K^\mathrm{ab}})u}
\quad(u\in\widehat{\mathbb{Z}}).
\end{equation*}

\begin{lemma}\label{decomposition}
We have a semidirect product decomposition
\begin{equation*}
\mathcal{G}_K=\mathcal{K}_K\rtimes_{\alpha_K}\mathcal{A}_K.
\end{equation*}
\end{lemma}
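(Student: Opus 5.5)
We verify the standard criteria for an internal semidirect product: $\mathcal{K}_K$ is normal in $\mathcal{G}_K$, $\mathcal{K}_K\cap\mathcal{A}_K=\{1\}$, $\mathcal{G}_K=\mathcal{K}_K\mathcal{A}_K$, and conjugation by $\sigma\in\mathcal{A}_K$ on $\mathcal{K}_K$ agrees with $\alpha_K(\sigma)$ (with the convention fixed by the paper's right-action notation).

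\emph{Normality.} $\mathcal{K}_K=\mathrm{Gal}(K^\mathrm{ab}(\mathfrak{t}^{1/\infty})/K^\mathrm{ab}(\mathfrak{t}))$ is normal because $K^\mathrm{ab}(\mathfrak{t})/K(\mathfrak{t})$ is Galois. It is Galois since $K^\mathrm{ab}/K$ is Galois and $\mathfrak{t}$ is transcendental, so $K^\mathrm{ab}(\mathfrak{t})$ is the compositum of $K^\mathrm{ab}$ and $K(\mathfrak{t})$.

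\emph{Trivial intersection.} Let $g\in\mathcal{K}_K\cap\mathcal{A}_K$. Then $g$ fixes both $K^\mathrm{ab}$ and every $\sqrt[N]{\mathfrak{t}}$, hence fixes $K^\mathrm{ab}(\mathfrak{t}^{1/\infty})$, so $g=1$.

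\emph{Product.} Let $g\in\mathcal{G}_K$. By \eqref{AKG} there is $\sigma\in\mathcal{A}_K$ with $\sigma|_{K^\mathrm{ab}}=g|_{K^\mathrm{ab}}$. Then $g\sigma^{-1}$ fixes $K^\mathrm{ab}$ and $\mathfrak{t}$, so $g\sigma^{-1}\in\mathcal{K}_K$. Hence $\mathcal{G}_K=\mathcal{K}_K\mathcal{A}_K$.

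\emph{Conjugation formula.} This is the main point. Fix $\sigma\in\mathcal{A}_K$ and $u\in\widehat{\mathbb{Z}}$, and compute the action of $\sigma^{-1}\eta_u\sigma$ on $\sqrt[N]{\mathfrak{t}}$, composing from the left in the right-action notation. First, $\sigma^{-1}$ fixes $\sqrt[N]{\mathfrak{t}}$. Next, $\eta_u$ sends $\sqrt[N]{\mathfrak{t}}$ to $\zeta_N^{u_N}\sqrt[N]{\mathfrak{t}}$. Finally, $\sigma$ sends $\zeta_N$ to $\zeta_N^{\chi_K(\sigma|_{K^\mathrm{ab}})_N}$, by the definition of $\iota$ and $\chi_K$, and fixes $\sqrt[N]{\mathfrak{t}}$. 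Therefore
\begin{equation*}
\sigma^{-1}\eta_u\sigma=\eta_{\chi_K(\sigma|_{K^\mathrm{ab}})u}.
\end{equation*}
Equivalently, $\sigma\eta_u\sigma^{-1}=\eta_{\chi_K(\sigma^{-1}|_{K^\mathrm{ab}})u}=\alpha_K(\sigma)(\eta_u)$, using that $\chi_K$ is a homomorphism.

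Both sides are elements of $\mathcal{K}_K$: $\mathcal{K}_K$ is normal, and the right-hand side lies in $\mathcal{K}_K$ by \eqref{ZKue}. Elements of $\mathcal{K}_K$ are determined by their action on the $\sqrt[N]{\mathfrak{t}}$, so this identity pins down the conjugation action. It also shows that $\alpha_K$ is a homomorphism into $\mathrm{Aut}(\mathcal{K}_K)$.

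The main obstacle is only bookkeeping: fixing the composition convention so that the inverse in the definition of $\alpha_K$ comes out correctly. If the paper's right-action convention reverses the order of composition, the same computation gives the formula with $\sigma$ and $\sigma^{-1}$ interchanged, which matches $\alpha_K$ in that convention. The argument then concludes by the recognition theorem for internal semidirect products.
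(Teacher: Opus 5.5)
Your proof is correct and follows the paper's argument step for step. The shared structure is normality of $\mathcal{K}_K$ as the kernel of restriction to $K^\mathrm{ab}$, trivial intersection, $\mathcal{G}_K=\mathcal{K}_K\mathcal{A}_K$ via \eqref{AKG}, and the conjugation formula checked on the $\sqrt[N]{\mathfrak{t}}$; the only cosmetic difference is that the paper computes $\sigma\eta_u\sigma^{-1}$ directly rather than computing $\sigma^{-1}\eta_u\sigma$ and then substituting $\sigma^{-1}$ for $\sigma$.

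Your closing hedge is unnecessary, because the order you used, $x^{gh}=(x^g)^h$, is exactly the paper's convention. It is also slightly wrong: under the opposite convention one would get $\sigma\eta_u\sigma^{-1}=\eta_{\chi_K(\sigma|_{K^\mathrm{ab}})u}$, which does not match $\alpha_K$ as defined.
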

\begin{proof}
It is straightforward that
$\mathcal{A}_K\cap\mathcal{K}_K=\{1\}$ and
\begin{equation*}
\mathcal{K}_K=
\ker\left(
\mathcal{G}_K\stackrel{\mathrm{restriction}}{\longrightarrow}
\mathrm{Gal}(K^\mathrm{ab}/K)
\right)\trianglelefteq \mathcal{G}_K.
\end{equation*}
Let $\nu\in \mathcal{G}_K$. By \eqref{AKG}, there is an element
$\sigma\in\mathcal{A}_K$ such that
$\nu|_{K^\mathrm{ab}}=\sigma|_{K^\mathrm{ab}}$. Hence
we see that
\begin{equation*}
\nu=(\nu\sigma^{-1})\sigma\in\mathcal{K}_K\mathcal{A}_K,
\end{equation*}
which shows that $\mathcal{G}_K=\mathcal{K}_K\mathcal{A}_K$.
\par
And, it remains to identify the conjugation action of $\mathcal{A}_K$ on $\mathcal{K}_K$. 
Let $\sigma\in\mathcal{A}_K$ and $\eta_u\in\mathcal{K}_K$ with $u\in\widehat{\mathbb{Z}}$. 
We find that for each $N\in\mathbb{N}$
\begin{align*}
\left(\sqrt[N]{\mathfrak{t}}\right)^{\sigma\eta_u\sigma^{-1}}
&=\left(\left(\sqrt[N]{\mathfrak{t}}\right)^{\eta_u}\right)^{\sigma^{-1}}
\quad\textrm{because $\sigma$ fixes $\sqrt[N]{\mathfrak{t}}$}\\
&=\left(\zeta_N^{u_N}\sqrt[N]{\mathfrak{t}}\right)^{\sigma^{-1}}\\
&=\zeta_N^{\left(\chi_K(\sigma^{-1}|_{K^\mathrm{ab}})\right)_Nu_N}
\sqrt[N]{\mathfrak{t}}\\
&=\left(\sqrt[N]{\mathfrak{t}}\right)^{
\eta_{\chi_K(\sigma^{-1}|_{K^\mathrm{ab}})u}}.
\end{align*}
Therefore we obtain
\begin{equation*}
\sigma\eta_u\sigma^{-1}
=\eta_{\chi_K(\sigma^{-1}|_{K^\mathrm{ab}})u}=\alpha_K(\sigma)(\eta_u).
\end{equation*}
\end{proof}

For $g,\,h\in \mathcal{G}_K$, let
$[g,h]=g^{-1}h^{-1}gh$ be the commutator. By $[\mathcal{G}_K,\,\mathcal{G}_K]$ 
we mean the commutator subgroup of $\mathcal{G}_K$, that is,
\begin{equation*}
[\mathcal{G}_K,\mathcal{G}_K]=\langle[g,\,h]~|~g,h\in \mathcal{G}_K\rangle
\quad(\trianglelefteq \mathcal{G}_K). 
\end{equation*}

\begin{lemma}\label{comm}
Let $J_K$ be the ideal of $\widehat{\mathbb{Z}}$ generated by 
$\{a-1~|~a\in I_K\}$, namely,
\begin{equation*}
J_K=(a-1~|~a\in I_K).
\end{equation*}
\begin{enumerate}
\item[\textup{(i)}] We establish $[\mathcal{G}_K,\,\mathcal{G}_K]=\{\eta_u~|~u\in J_K\}$.
\item[\textup{(ii)}] We get
$J_K=e\widehat{\mathbb Z}$ for some $e\in\mathbb{N}$,
and so $[\mathcal{G}_K,\,\mathcal{G}_K]\simeq\widehat{\mathbb{Z}}$.
\end{enumerate}
\end{lemma}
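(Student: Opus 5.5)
The plan is to work entirely inside the semidirect product decomposition $\mathcal{G}_K=\mathcal{K}_K\rtimes_{\alpha_K}\mathcal{A}_K$ of Lemma \ref{decomposition}. Two facts drive part (i). First, $\mathcal{K}_K\simeq\widehat{\mathbb{Z}}$ via \eqref{ZKue}. Second, $\mathcal{A}_K\simeq\mathrm{Gal}(K^\mathrm{ab}/K)$ by \eqref{AKG}. Both factors are therefore abelian.

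For (i), I first show that every commutator lies in $\mathcal{K}_K$. The quotient $\mathcal{G}_K/\mathcal{K}_K\simeq\mathcal{A}_K$ is abelian, so $[\mathcal{G}_K,\,\mathcal{G}_K]\subseteq\mathcal{K}_K$.

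Next I compute commutators explicitly. Write $g=\eta_u\sigma$ and $h=\eta_v\tau$ with $u,v\in\widehat{\mathbb{Z}}$ and $\sigma,\tau\in\mathcal{A}_K$. Put $c(\sigma)=\chi_K(\sigma^{-1}|_{K^\mathrm{ab}})$. Conjugation gives $\sigma\eta_u\sigma^{-1}=\eta_{c(\sigma)u}$, and from this
\begin{equation*}
[\eta_u,\,\sigma]=\eta_{(c(\sigma)^{-1}-1)u}.
\end{equation*}
Expanding $[g,h]$ with $\mathcal{A}_K$ and $\mathcal{K}_K$ abelian should give $[g,h]=\eta_w$ with
\begin{equation*}
w=(c(\tau)^{\pm1}-1)u'+(c(\sigma)^{\pm1}-1)v'
\end{equation*}
for suitable $u',v'$. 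Such a $w$ lies in $J_K$.

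For the reverse inclusion, note that $\chi_K$ maps onto $I_K$ by \eqref{IKdef}. So as $\sigma$ runs through $\mathcal{A}_K$, the unit $c(\sigma)^{-1}$ runs through all of $I_K$. Hence the elements $[\eta_u,\sigma]$ give $\eta_{(a-1)u}$ for every $a\in I_K$ and every $u\in\widehat{\mathbb{Z}}$. Since $u\mapsto\eta_u$ is additive, the subgroup generated by these is $\{\eta_w~|~w\in\sum_a(a-1)\widehat{\mathbb{Z}}\}=\{\eta_w~|~w\in J_K\}$. Here $J_K$ is the algebraically generated ideal, i.e. finite sums, matching the abstractly generated commutator subgroup.

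For (ii), I find an explicit multiple of $\widehat{\mathbb{Z}}$ inside $J_K$. Let $m=|d_K|$. Then $K\subseteq\mathbb{Q}(\zeta_m)$, so by \eqref{IK} the group $I_K$ contains $U_m=\ker(\widehat{\mathbb{Z}}^\times\to(\mathbb{Z}/m\mathbb{Z})^\times)$. Working componentwise in $\widehat{\mathbb{Z}}\simeq\prod_p\mathbb{Z}_p$, I choose $a\in U_m$ such that $a-1=2m\varepsilon$ with $\varepsilon\in\widehat{\mathbb{Z}}^\times$:
\begin{itemize}[noitemsep]
\item for $p$ odd, take $a_p=1+2m\varepsilon_p$, where $\varepsilon_p$ is a $p$-adic unit chosen with $a_p$ also a unit (possible when $p\mid 2m$ or $p\nmid 2m$);
\item for $p=2$, $a_2=1+2m\varepsilon_2$ is automatically a unit.
\end{itemize}
Then $2m\widehat{\mathbb{Z}}=(a-1)\widehat{\mathbb{Z}}\subseteq J_K$.

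Since $\widehat{\mathbb{Z}}/2m\widehat{\mathbb{Z}}\simeq\mathbb{Z}/2m\mathbb{Z}$ is a principal ideal ring, the ideal $J_K$ equals $e\widehat{\mathbb{Z}}$ for some positive divisor $e$ of $2m$. Finally, by (i) and \eqref{ZKue}, $[\mathcal{G}_K,\,\mathcal{G}_K]\simeq e\widehat{\mathbb{Z}}$. Multiplication by $e$ is injective on $\widehat{\mathbb{Z}}$, so $e\widehat{\mathbb{Z}}\simeq\widehat{\mathbb{Z}}$.

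The main obstacle I expect is the step in (ii) producing a unit $a\in I_K$ with $a-1$ equal to an integer times a unit, uniformly at every prime, including the parity issue at $p=2$. The commutator bookkeeping in (i) is routine once the conjugation formula from Lemma \ref{decomposition} is used.
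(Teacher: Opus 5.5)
Your proposal is correct. Part (i) follows the paper; part (ii) takes a different route.

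\textbf{Part (i).} The reverse inclusion is the same as the paper's: you use $[\eta_u,\sigma]=\eta_{(\chi_K(\sigma|_{K^\mathrm{ab}})-1)u}$ together with the surjectivity of $\chi_K$ onto $I_K$. For the forward inclusion, the paper does not expand a general commutator. It shows instead that $M=\{\eta_u~|~u\in J_K\}$ is normal and that $\mathcal{G}_K/M$ is abelian. Your expansion, which you only say ``should'' work, does go through. With $c(\sigma)=\chi_K(\sigma^{-1}|_{K^\mathrm{ab}})$ one gets
\begin{equation*}
[\eta_u\sigma,\,\eta_v\tau]=\eta_w,\qquad w=c(\sigma\tau)^{-1}\bigl((1-c(\tau))u+(c(\sigma)-1)v\bigr)\in J_K .
\end{equation*}
Since $\{\eta_w~|~w\in J_K\}$ is a subgroup, no normality check is needed.

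\textbf{Part (ii).} The paper uses only that $I_K$ has index $2$ in $\widehat{\mathbb{Z}}^\times$, so it contains every square. It takes $b=(b_p)_p$ with $b_2=3$ and $b_p=2$ for $p\neq2$, which gives $(b^2-1)\widehat{\mathbb{Z}}=24\widehat{\mathbb{Z}}\subseteq J_K$. This argument is purely group-theoretic. It applies to any index-$2$ subgroup, open or not, and gives the bound $e\mid 24$ uniformly in $K$.

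You instead use the arithmetic input $K\subseteq\mathbb{Q}(\zeta_{|d_K|})$, that is, the openness of $I_K$ through $U_m\subseteq I_K$. You then build $a=1+2m\varepsilon$ prime by prime. This yields the $K$-dependent bound $e\mid 2|d_K|$. Its advantage is that it adapts verbatim to any open subgroup of $\widehat{\mathbb{Z}}^\times$, whatever its index.

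\textbf{A point to make explicit.} For odd $p\nmid 2m$, you need a unit $\varepsilon_p$ with $1+2m\varepsilon_p$ also a unit. This requires exactly $\varepsilon_p\not\equiv 0,\,-(2m)^{-1}\Mod{p}$, which is possible because $p\geq 3$. Your parenthetical asserts that such $\varepsilon_p$ exists but does not give this reason.
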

\begin{proof}
\begin{enumerate}
\item[(i)]
Let
\begin{equation*}
M=\{\eta_u~|~u\in J_K\}\subseteq\mathcal{K}_K.
\end{equation*}
We claim that $M$ is normal in $\mathcal{G}_K$. Indeed, conjugation by
an element of $\mathcal K_K$ acts trivially on $M$, since
$\mathcal K_K\simeq\widehat{\mathbb{Z}}$ is abelian. Moreover, for
$\sigma\in\mathcal{A}_K$ and $v\in J_K$, Lemma \ref{decomposition} gives
\begin{equation*}
\sigma\eta_v\sigma^{-1}=
\eta_{\chi_K(\sigma^{-1}|_{K^\mathrm{ab}})\textstyle v}\in M,
\end{equation*}
because $J_K$ is an ideal of $\widehat{\mathbb Z}$.
Since $\mathcal{G}_K=\mathcal{K}_K\mathcal{A}_K$, this yields that
$M$ is normal in $\mathcal{G}_K$. 
\par
Observe by Lemma \ref{decomposition} that
for any $\eta_u\in\mathcal{K}_K$ ($u\in\widehat{\mathbb{Z}}$) and $\sigma\in\mathcal{A}_K$
\begin{equation}\label{etasigma}
[\eta_u,\,\sigma]=\eta_u^{-1}\left(\sigma^{-1}\eta_u\sigma\right)
=\eta_{-u}\eta_{\chi_K(\sigma|_{K^\mathrm{ab}})u}
=\eta_{\left(\chi_K(\sigma|_{K^\mathrm{ab}})-1\right)u}\in M.
\end{equation}
Furthermore, since $\mathcal{K}_K$ and $\mathcal{A}_K$ are abelian, \eqref{etasigma}
asserts that
their images in $\mathcal{G}_K/M$ commute with each other.
Since these images generate $\mathcal{G}_K/M$, the quotient $\mathcal{G}_K/M$ is
abelian. Thus we obtain the inclusion
$[\mathcal{G}_K,\,\mathcal{G}_K]\subseteq M$.
\par
Conversely, every $v\in J_K$ is of the form
\begin{equation}\label{v}
v=\sum_{i=1}^r(a_i-1)u_i
\quad(r\in\mathbb{N},~a_i\in I_K,~u_i\in\widehat{\mathbb{Z}}).
\end{equation}
By \eqref{AKG} and the definition \eqref{IKdef}, for each $i=1,\,2,\,\ldots,\,r$ we may choose
$\sigma_i\in\mathcal{A}_K$ satisfying
\begin{equation}\label{chiai}
\chi_K(\sigma_i|_{K^\mathrm{ab}})=a_i.
\end{equation}
We then achieve that
\begin{align*}
\eta_v&=\eta_{\sum_{i=1}^r(\chi_K(\sigma_i|_{K^\mathrm{ab}})-1)u_i}\quad\textrm{by 
\eqref{v} and \eqref{chiai}}\\
&=\prod_{i=1}^r\eta_{(\chi_K(\sigma_i|_{K^\mathrm{ab}})-1)u_i}\quad\textrm{via the isomorphism given in \eqref{ZKue}}\\
&=\prod_{i=1}^r[\eta_{u_i},\,\sigma_i]\quad\textrm{by \eqref{etasigma}}\\
&\in [\mathcal{G}_K,\,\mathcal{G}_K].
\end{align*}
This shows that $M\subseteq[\mathcal{G}_K,\,\mathcal{G}_K]$, and hence
\begin{equation*}
[\mathcal{G}_K,\,\mathcal{G}_K]=M=\{\eta_u~|~u\in J_K\}. 
\end{equation*}
\item[(ii)]
Since $I_K$ has index $2$ in $\widehat{\mathbb{Z}}^{\times}$, it contains every square.
Identifying $\widehat{\mathbb{Z}}$ with 
$\prod_{p\,:\,\textrm{primes}}\mathbb{Z}_p$, 
let $b=(b_p)_p\in\widehat{\mathbb{Z}}^{\times}$ be defined by
\begin{equation*}
b_2=3\quad\textrm{and}\quad
b_p=2~(p\neq2).
\end{equation*}
Then we derive that
\begin{equation*}
J_K\supseteq(b^2-1)\widehat{\mathbb{Z}}=
8\mathbb{Z}_2\times3\mathbb{Z}_3
\times\prod_{p\neq2,\,3}\mathbb{Z}_p=
24\widehat{\mathbb{Z}}.
\end{equation*}
Therefore $J_K/24\widehat{\mathbb{Z}}$ is an ideal of $\widehat{\mathbb{Z}}/24\widehat{\mathbb{Z}}
\simeq\mathbb{Z}/24\mathbb{Z}$, and hence
\begin{equation*}
J_K/24\widehat{\mathbb{Z}}=e\widehat{\mathbb{Z}}/24\widehat{\mathbb{Z}}
\quad\textrm{for some $e\in\mathbb{N}$ dividing $24$}. 
\end{equation*}
Taking the inverse 
image of $e\widehat{\mathbb{Z}}/24\widehat{\mathbb{Z}}$
under the canonical homomorphism $\widehat{\mathbb{Z}}\rightarrow\widehat{\mathbb{Z}}
/24\widehat{\mathbb{Z}}$, 
we obtain $J_K=e\widehat{\mathbb{Z}}$. It follows from (i) that
\begin{equation*}
[\mathcal{G}_K,\,\mathcal{G}_K]=\{\eta_u~|~u\in J_K\}\simeq J_K=e\widehat{\mathbb{Z}}\simeq\widehat{\mathbb{Z}}. 
\end{equation*}
\end{enumerate}
\end{proof}

\begin{lemma}\label{Aut}
Let $H$ be a group isomorphic to $\widehat{\mathbb{Z}}$. Then $H$ has a canonical
$\widehat{\mathbb{Z}}$-module structure via the isomorphism
\begin{equation*}
H\stackrel{\sim}{\rightarrow}\varprojlim_{N\in\mathbb{N}} H/NH,\quad
h\mapsto (h+NH)_{N\in\mathbb{N}}.
\end{equation*}
Every group homomorphism between two such
groups is $\widehat{\mathbb{Z}}$-linear.
Consequently, 
\begin{equation*}
\mathrm{Aut}(H)\simeq\widehat{\mathbb{Z}}^\times,
\end{equation*}
where an element $a\in\widehat{\mathbb{Z}}^\times$ corresponds to multiplication by $a$.
\end{lemma}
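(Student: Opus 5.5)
The plan is to fix an abstract group isomorphism $\theta:\widehat{\mathbb{Z}}\stackrel{\sim}{\rightarrow}H$ and transport the natural $\widehat{\mathbb{Z}}$-module structure of $\widehat{\mathbb{Z}}$. Then $H$ carries a $\widehat{\mathbb{Z}}$-module structure, and the task is to show it does not depend on $\theta$; equivalently, it agrees with the one defined intrinsically through the map $h\mapsto(h+NH)_N$. The key observation is that the subgroups $NH$ are defined purely group-theoretically. Under $\theta$ they correspond to $N\widehat{\mathbb{Z}}$, and $\widehat{\mathbb{Z}}\rightarrow\varprojlim_N\widehat{\mathbb{Z}}/N\widehat{\mathbb{Z}}$ is an isomorphism because $\bigcap_N N\widehat{\mathbb{Z}}=\{0\}$ (used in Lemma \ref{finite}) and $\widehat{\mathbb{Z}}$ is complete. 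Hence $H\rightarrow\varprojlim_N H/NH$ is an isomorphism.

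Each $H/NH\simeq\mathbb{Z}/N\mathbb{Z}$ is canonically a $\mathbb{Z}/N\mathbb{Z}$-module, so $\varprojlim_N H/NH$ is a $\varprojlim_N\mathbb{Z}/N\mathbb{Z}=\widehat{\mathbb{Z}}$-module. Pulling this back gives the canonical module structure on $H$, and it visibly matches the structure transported by $\theta$, since $\theta$ induces $\mathbb{Z}/N\mathbb{Z}$-linear isomorphisms level by level.

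Next I will prove linearity. Let $\phi:H\rightarrow H'$ be any group homomorphism between two such groups. Then $\phi(NH)\subseteq NH'$, so $\phi$ induces homomorphisms $\phi_N:H/NH\rightarrow H'/NH'$. Every additive map between $\mathbb{Z}/N\mathbb{Z}$-modules is $\mathbb{Z}/N\mathbb{Z}$-linear, and the $\phi_N$ are compatible with the transition maps. Therefore $\varprojlim\phi_N$ is $\widehat{\mathbb{Z}}$-linear, and it agrees with $\phi$ under the identifications from the first step. Concretely, $\phi(ah)\equiv a_N\phi(h)\equiv a\phi(h)\pmod{NH'}$ for all $N$, and since $\bigcap_N NH'=0$ we get $\phi(ah)=a\phi(h)$. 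This step is where the absence of any continuity hypothesis matters: linearity holds only because the $NH$ are intrinsic and the target is separated for this filtration. I expect this to be the only delicate point, and it is resolved exactly as just described.

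Finally, $\mathrm{Aut}(H)$ equals the group of $\widehat{\mathbb{Z}}$-linear automorphisms of a free rank-one $\widehat{\mathbb{Z}}$-module. Such an automorphism is determined by the image $a\,g$ of a generator $g=\theta(1)$, and it is multiplication by $a$. Invertibility forces $a\in\widehat{\mathbb{Z}}^\times$, and conversely every $a\in\widehat{\mathbb{Z}}^\times$ gives an automorphism. The assignment $a\mapsto(h\mapsto ah)$ is a group isomorphism $\widehat{\mathbb{Z}}^\times\simeq\mathrm{Aut}(H)$, and it is canonical, being independent of $g$ because the module structure is.
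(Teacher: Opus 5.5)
Your proof is correct and follows essentially the same route as the paper, which only says the lemma "follows immediately from the inverse-limit description of $\widehat{\mathbb{Z}}$" (citing Ribes--Zalesskii). You spell that description out: $NH$ is defined group-theoretically, every homomorphism induces $\mathbb{Z}/N\mathbb{Z}$-linear maps level by level, and $\bigcap_N NH'=\{0\}$ then gives $\widehat{\mathbb{Z}}$-linearity without any continuity assumption.
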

\begin{proof}
This follows immediately from the inverse-limit description
of $\widehat{\mathbb{Z}}$. 
See also \cite[Theorem 4.4.7 and Corollary 4.4.8]{R-Z}. 
\end{proof}

\begin{remark}
When the groups in Lemma \ref{Aut} are endowed with their canonical profinite
topologies, every abstract group homomorphism between them is automatically
continuous; see \cite[Theorem 1.1]{N-S}. 
\end{remark}

Since $[\mathcal{G}_K,\,\mathcal{G}_K]\trianglelefteq \mathcal{G}_K$, we get a homomorphism
\begin{equation*}
\theta_K:\mathcal{G}_K\rightarrow\mathrm{Aut}([\mathcal{G}_K,\,\mathcal{G}_K]),
\quad g\mapsto\theta_K(g)
\end{equation*}
where $\theta_K(g)$ is defined by 
\begin{equation*}
\theta_K(g)(x)=gxg^{-1}\quad(x\in[\mathcal{G}_K,\,\mathcal{G}_K]).
\end{equation*}
Furthermore, by Lemmas \ref{comm} (ii) and \ref{Aut} we deduce the canonical isomorphism
\begin{equation*}
\phi_K:\mathrm{Aut}([\mathcal{G}_K,\,\mathcal{G}_K])\stackrel{\sim}{\rightarrow}\widehat{\mathbb{Z}}^\times. 
\end{equation*}
Define a homomorphism
\begin{equation*}
\Theta_K:\mathcal{G}_K\rightarrow\widehat{\mathbb{Z}}^\times
~\textrm{as the composition}~\Theta_K=\phi_K\circ\theta_K. 
\end{equation*}

\begin{lemma}\label{iTIK}
We have $\mathrm{im}(\Theta_K)=I_K=\mathrm{im}(\chi_K)$.
\end{lemma}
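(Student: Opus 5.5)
We compute $\Theta_K$ explicitly on the two pieces of the decomposition $\mathcal{G}_K=\mathcal{K}_K\rtimes_{\alpha_K}\mathcal{A}_K$ of Lemma \ref{decomposition}. By Lemma \ref{comm}, $[\mathcal{G}_K,\,\mathcal{G}_K]=\{\eta_u~|~u\in J_K\}$ with $J_K=e\widehat{\mathbb{Z}}$, so $u\mapsto\eta_{eu}$ is an isomorphism $\widehat{\mathbb{Z}}\stackrel{\sim}{\rightarrow}[\mathcal{G}_K,\,\mathcal{G}_K]$. Under it, multiplication by $a\in\widehat{\mathbb{Z}}^\times$ on $\widehat{\mathbb{Z}}$ corresponds to $\eta_{v}\mapsto\eta_{av}$ for $v\in J_K$. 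By Lemma \ref{Aut}, every group endomorphism is $\widehat{\mathbb{Z}}$-linear, and the $\widehat{\mathbb{Z}}$-module structure is canonical. Hence the identification $\phi_K$ sends an automorphism of the form $\eta_v\mapsto\eta_{av}$ to $a$, independently of the chosen isomorphism with $\widehat{\mathbb{Z}}$.

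First, for $g=\eta_w\in\mathcal{K}_K$, conjugation fixes $[\mathcal{G}_K,\,\mathcal{G}_K]\subseteq\mathcal{K}_K$ pointwise because $\mathcal{K}_K\simeq\widehat{\mathbb{Z}}$ is abelian. Thus $\Theta_K(\eta_w)=1$.

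Second, for $\sigma\in\mathcal{A}_K$, Lemma \ref{decomposition} gives $\sigma\eta_v\sigma^{-1}=\eta_{\chi_K(\sigma^{-1}|_{K^\mathrm{ab}})v}$ for all $v\in J_K$. Hence $\Theta_K(\sigma)=\chi_K(\sigma^{-1}|_{K^\mathrm{ab}})=\chi_K(\sigma|_{K^\mathrm{ab}})^{-1}$.

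Since every $g\in\mathcal{G}_K$ can be written as $g=\eta_w\sigma$ and $\Theta_K$ is a homomorphism, we get $\Theta_K(g)=\chi_K(\sigma|_{K^\mathrm{ab}})^{-1}$. Therefore $\mathrm{im}(\Theta_K)=\{a^{-1}~|~a\in\chi_K(\mathrm{Gal}(K^\mathrm{ab}/K))\}$, where we use \eqref{AKG}: restriction $\mathcal{A}_K\rightarrow\mathrm{Gal}(K^\mathrm{ab}/K)$ is onto. This set is $I_K^{-1}=I_K$, since $I_K=\mathrm{im}(\chi_K)$ is a subgroup. This proves the claim.

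The only delicate point is the claim that $\phi_K$ really records the scalar $a$. This needs two facts. First, $[\mathcal{G}_K,\,\mathcal{G}_K]$ is a $\widehat{\mathbb{Z}}$-submodule of $\mathcal{K}_K$ on which $\eta_v\mapsto\eta_{av}$ is exactly multiplication by $a$ in the canonical module structure of Lemma \ref{Aut}. Second, that module structure is intrinsic: the $\widehat{\mathbb{Z}}$-action defined via $H\simeq\varprojlim H/NH$ agrees with the one transported from $u\mapsto\eta_{eu}$, because any group isomorphism is $\widehat{\mathbb{Z}}$-linear by Lemma \ref{Aut}. Checking this compatibility is the step I expect to need the most care; the rest is routine.
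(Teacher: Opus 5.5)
Your proof is correct and follows the paper's argument: $\mathcal{K}_K$ acts trivially on the commutator subgroup, $\Theta_K(\sigma)=\chi_K(\sigma^{-1}|_{K^\mathrm{ab}})$ on $\mathcal{A}_K$ via the $\widehat{\mathbb{Z}}$-linearity from Lemma \ref{Aut}, and the restriction $\mathcal{A}_K\rightarrow\mathrm{Gal}(K^\mathrm{ab}/K)$ is surjective. Your remark that $I_K^{-1}=I_K$ because $I_K$ is a subgroup fills in a step the paper leaves implicit.
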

\begin{proof}
Recall from Lemma \ref{decomposition} that 
\begin{equation}\label{decom}
\mathcal{G}_K=\mathcal{K}_K\rtimes_{\alpha_K}\mathcal{A}_K.
\end{equation}
Then it follows that
\begin{equation*}
\mathcal{G}_K/\mathcal{K}_K\simeq\mathcal{A}_K\simeq
\mathrm{Gal}(K^\mathrm{ab}/K),
\end{equation*}
which implies the inclusion $[\mathcal{G}_K,\,\mathcal{G}_K]\subseteq\mathcal{K}_K$. 
Moreover, since $\mathcal{K}_K\simeq\widehat{\mathbb{Z}}$ is abelian, 
every element of $\mathcal{K}_K$ acts trivially on $[\mathcal{G}_K,\,\mathcal{G}_K]$ by conjugation, and hence
\begin{equation}\label{trivial}
\mathrm{im}(\Theta_K|_{\mathcal{K}_K})=\{1\}.
\end{equation}
\par
Now, let $\sigma\in\mathcal{A}_K$. By Lemmas \ref{decomposition} and \ref{comm} (i), we have
\begin{equation}\label{tse}
\theta_K(\sigma)(\eta_u)
 =\sigma\eta_u\sigma^{-1}
 =\eta_{\chi_K(\sigma^{-1}|_{K^\mathrm{ab}})u}\quad(u\in J_K).
\end{equation}
By \eqref{ZKue}, Lemmas \ref{comm} and \ref{Aut}, the isomorphism
\begin{equation*}
J_K\stackrel{\sim}{\rightarrow}[\mathcal{G}_K,\,\mathcal{G}_K],\quad u\mapsto\eta_u
\end{equation*}
is $\widehat{\mathbb Z}$-linear. 
And, it follows from \eqref{tse} that
\begin{equation*}
\Theta_K(\sigma)=\chi_K(\sigma^{-1}|_{K^\mathrm{ab}}),
\end{equation*}
which renders by \eqref{AKG} 
\begin{equation}\label{imageA}
\mathrm{im}(\Theta_K|_{\mathcal{A}_K})=I_K.
\end{equation}
Combining \eqref{decom}, \eqref{trivial} and \eqref{imageA}, we conclude
\begin{equation*}
\mathrm{im}(\Theta_K)=I_K.
\end{equation*}
\end{proof}

\begin{theorem}\label{rigidity}
Let $D_1$ and $D_2$ be negative discriminants. Then we derive that
\begin{equation*}
\widehat{C}(D_1)\simeq\widehat{C}(D_2)
~\text{as groups}
\quad\Longleftrightarrow\quad
\mathbb{Q}(\sqrt{D_1})=\mathbb{Q}(\sqrt{D_2}).
\end{equation*}
\end{theorem}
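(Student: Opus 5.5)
The plan is to reduce the statement to the group-theoretic invariant $\mathrm{im}(\Theta_K)$ and show that it is intrinsic to the abstract group and determines $K$.

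For the direction ``$\Leftarrow$'': if $\mathbb{Q}(\sqrt{D_1})=\mathbb{Q}(\sqrt{D_2})=K$, then Theorem \ref{main} gives $\widehat{C}(D_1)\simeq\mathcal{G}_K\simeq\widehat{C}(D_2)$. The target group depends only on $K$ (and on $\mathfrak{t}$, which is fixed), not on the order $\mathcal{O}$.

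For ``$\Rightarrow$'', write $K_i=\mathbb{Q}(\sqrt{D_i})$. By Theorem \ref{main} an abstract group isomorphism $\widehat{C}(D_1)\simeq\widehat{C}(D_2)$ yields an abstract isomorphism $f:\mathcal{G}_{K_1}\stackrel{\sim}{\rightarrow}\mathcal{G}_{K_2}$.

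\begin{enumerate}
\item[(1)] The isomorphism $f$ carries commutator subgroups onto each other, so it restricts to an isomorphism $f':[\mathcal{G}_{K_1},\mathcal{G}_{K_1}]\rightarrow[\mathcal{G}_{K_2},\mathcal{G}_{K_2}]$. Both groups are isomorphic to $\widehat{\mathbb{Z}}$ by Lemma \ref{comm} (ii).
\item[(2)] Conjugation is compatible with $f$: $\theta_{K_2}(f(g))=f'\circ\theta_{K_1}(g)\circ f'^{-1}$.
\item[(3)] The canonical identifications $\phi_{K_i}$ of Lemma \ref{Aut} send an automorphism to the scalar $a\in\widehat{\mathbb{Z}}^\times$ by which it multiplies. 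Since every homomorphism between groups isomorphic to $\widehat{\mathbb{Z}}$ is $\widehat{\mathbb{Z}}$-linear (Lemma \ref{Aut}), conjugating by $f'$ does not change that scalar. Indeed, if $\theta_{K_1}(g)$ is multiplication by $a$, then $f'\circ(a\cdot)\circ f'^{-1}=a\cdot$. Hence $\Theta_{K_2}\circ f=\Theta_{K_1}$, and surjectivity of $f$ gives $\mathrm{im}(\Theta_{K_1})=\mathrm{im}(\Theta_{K_2})$.
\item[(4)] By Lemma \ref{iTIK} this means $I_{K_1}=I_{K_2}$ inside $\widehat{\mathbb{Z}}^\times$.
\end{enumerate}

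It remains to recover $K$ from $I_K$. By \eqref{IK}, $I_K=\iota(\mathrm{Gal}(\mathbb{Q}^{\mathrm{ab}}/K))$ and $\iota$ is a fixed isomorphism, so $\mathrm{Gal}(\mathbb{Q}^{\mathrm{ab}}/K_1)=\mathrm{Gal}(\mathbb{Q}^{\mathrm{ab}}/K_2)$. Infinite Galois theory for $\mathbb{Q}^{\mathrm{ab}}/\mathbb{Q}$ (distinct subfields have distinct closed fixing subgroups) then gives $K_1=K_2$. These subgroups are closed, being of finite index as open kernels, so the Galois correspondence applies directly.

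I expect step (3) to be the main obstacle. One must check that the scalar attached to an automorphism of a group isomorphic to $\widehat{\mathbb{Z}}$ is independent of the chosen isomorphism with $\widehat{\mathbb{Z}}$. This holds because the $\widehat{\mathbb{Z}}$-module structure of Lemma \ref{Aut} is canonical and is preserved by any abstract group isomorphism. Consequently the argument needs no continuity of $f$.
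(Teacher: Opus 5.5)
Your proposal is correct and follows essentially the same route as the paper. Like the paper, you restrict the isomorphism to the commutator subgroups and use the canonical $\widehat{\mathbb{Z}}$-linearity of Lemma \ref{Aut} to get $\Theta_{K_2}\circ\varphi=\Theta_{K_1}$, then deduce $I_{K_1}=I_{K_2}$ by Lemma \ref{iTIK} and conclude $K_1=K_2$ by taking fixed fields in $\mathbb{Q}^{\mathrm{ab}}$.
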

\begin{proof}
Set
\begin{equation*}
K_i=\mathbb{Q}(\sqrt{D_i})\quad\textrm{and}\quad
G_i=\mathrm{Gal}\left(K_i^\mathrm{ab}(\mathfrak{t}^{1/\infty})/K_i(\mathfrak{t})\right)
\quad(i=1,\,2).
\end{equation*}
If $K_1=K_2$, then by Theorem \ref{main} we are certain that
\begin{equation*}
\widehat{C}(D_1)\simeq G_1=G_2\simeq\widehat{C}(D_2).
\end{equation*}
\par
Conversely, suppose that
$\widehat{C}(D_1)\simeq\widehat{C}(D_2)$. By
Theorem \ref{main}, there is an isomorphism
\begin{equation*}
\varphi:G_1\stackrel{\sim}{\rightarrow}G_2,
\end{equation*}
which restricts to an isomorphism
\begin{equation*}
\varphi':[G_1,\,G_1]\stackrel{\sim}{\rightarrow}[G_2,\,G_2].
\end{equation*}
By Lemmas \ref{comm} (ii) and \ref{Aut}, $[G_i,\,G_i]$
are canonically $\widehat{\mathbb{Z}}$-modules ($i=1,\,2$) and $\varphi'$ is
$\widehat{\mathbb{Z}}$-linear.
We claim that
\begin{equation}\label{TvT}
\Theta_{K_2}\circ\varphi=\Theta_{K_1}.
\end{equation}
Indeed, let $g\in G_1$ and
\begin{equation*}
a=\Theta_{K_1}(g)\quad(\in\widehat{\mathbb{Z}}^\times).
\end{equation*}
Then we see that for every
$x\in[G_1,\,G_1]$
\begin{align*}
\theta_{K_2}(\varphi(g))(\varphi'(x))&=\varphi(g)\varphi'(x)\varphi(g)^{-1}\\
&=\varphi(gxg^{-1})\\
&=\varphi'(gxg^{-1})\quad\textrm{since}~gxg^{-1}\in [G_1,\,G_1]\trianglelefteq G_1\\
&=\varphi'(\theta_{K_1}(g)(x))\\
&=\varphi'(ax)\quad\textrm{by Lemma \ref{Aut}}\\
&=a\varphi'(x)\quad\textrm{because $\varphi'$ is $\widehat{\mathbb{Z}}$-linear}.
\end{align*}
Now that $\varphi'$ is surjective, the above observation yields by Lemma \ref{Aut} that
\begin{equation*}
\Theta_{K_2}(\varphi(g))=a=\Theta_{K_1}(g), 
\end{equation*}
which proves \eqref{TvT}. 
So, we establish that
\begin{align*}
I_{K_2}&=\mathrm{im}(\Theta_{K_2})\quad\textrm{by Lemma \ref{iTIK}}\\
&=\mathrm{im}(\Theta_{K_2}\circ\varphi)\quad\textrm{because $\varphi:G_1\rightarrow G_2$
is surjective}\\
&=\mathrm{im}(\Theta_{K_1})\quad\textrm{by \eqref{TvT}}\\
&=I_{K_1}.
\end{align*}
Hence it follows from \eqref{IK} and the injectivity of $\iota$ that
\begin{equation*}
\mathrm{Gal}(\mathbb{Q}^\mathrm{ab}/K_1)=
\mathrm{Gal}(\mathbb{Q}^\mathrm{ab}/K_2).
\end{equation*}
Taking fixed fields in $\mathbb{Q}^\mathrm{ab}$, we obtain $K_1=K_2$.
\end{proof}

\begin{remark}
If $D_1$ and $D_2$ are negative fundamental discriminants, then
Theorem~\ref{rigidity} gives
\begin{equation*}
\widehat{C}(D_1)\simeq\widehat{C}(D_2)~\textrm{as groups}
\quad\Longleftrightarrow\quad D_1=D_2.
\end{equation*}
\end{remark}

\begin{remark}
Let $K_1$ and $K_2$ be arbitrary number fields,
and let $\overline{K}_1$ and
$\overline{K}_2$ be their respective algebraic closures. One
consequence of the Neukirch-Uchida theorem \cite{Neukirch69, Neukirch692, Uchida}
is that
\begin{equation*}
\mathrm{Gal}(\overline{K}_1/K_1)\simeq
\mathrm{Gal}(\overline{K}_2/K_2)~\textrm{as profinite groups}
\quad\Longleftrightarrow\quad K_1\simeq K_2. 
\end{equation*}
This result was substantially strengthened in the direction of
solvable quotients by Sa\"{i}di and Tamagawa \cite{S-T}, who proved
that if the maximal three-step solvable Galois groups of 
$K_1$ and $K_2$ are isomorphic as profinite groups, then $K_1$ and $K_2$
are themselves isomorphic.
For the broader context of anabelian geometry,
we refer the reader to \cite{Grothendieck, Mochizuki, Stix}; see also \cite{B-T} for an
introduction to the birational anabelian viewpoint.
\end{remark}

\section*{Statements \& Declarations}

\subsection*{Funding}
The first author was supported
by the National Research Foundation of Korea (NRF) grant funded by the Korea government (MSIT)
(No. RS-2024-00348504).
The second (corresponding) author was supported
by Hankuk University of Foreign Studies Research Fund of 2026.
The third author was supported
by the National Research Foundation of Korea (NRF) grant funded by the Korea government (MSIT)
(No. RS-2024-00342522).

\subsection*{Competing interests}
The authors have not disclosed any competing interests.

\subsection*{Data availability}
Data sharing is not applicable to this article as no datasets
were generated or analysed during the current study.

\bibliographystyle{amsplain}

\address{
Department of Mathematical Sciences \\
KAIST \\
Daejeon 34141\\
Republic of Korea} {jkgoo@kaist.ac.kr}

\address{
Department of Mathematics\\
Hankuk University of Foreign Studies\\
Yongin-si, Gyeonggi-do 17035\\
Republic of Korea} {dhshin@hufs.ac.kr}

\address{
Department of Mathematics Education\\
Pusan National University\\
Busan 46241\\Republic of Korea}
{dsyoon@pusan.ac.kr}

\end{document}